\newcommand{\algorithmicbreak}{\textbf{break}}
\newcommand{\BREAK}{\STATE \algorithmicbreak}
\theoremstyle{plain}
\newtheorem*{theorem*}{Theorem}
\newtheorem{theorem}{Theorem}
\numberwithin{theorem}{section}
\newtheorem{lemma}[theorem]{Lemma}
\newtheorem{corollary}[theorem]{Corollary}
\newtheorem{conjecture}[theorem]{Conjecture}
\theoremstyle{definition}
\newtheorem{definition}[theorem]{Definition}
\newtheorem{remark}[theorem]{Remark}
\newtheorem{example}[theorem]{Example}
\definecolor{MyBlue}{RGB}{0,101,189}
\definecolor{MyRed}{RGB}{234, 114, 55} 
\definecolor{MyGreen}{RGB}{162,173,0}
\definecolor{MyPurple}{RGB}{230,0,200}
\definecolor{MyBrown}{RGB}{165, 42, 42}
\newcommand{\pa}{\text{pa}}
\newcommand{\ch}{\text{ch}}
\newcommand{\jpa}{\text{jpa}}
\newcommand{\im}{\text{Im}}
\newcommand{\cH}{\mathcal{H}}
\newcommand{\cL}{\mathcal{L}}
\newcommand{\cO}{\mathcal{O}}
\newcommand{\od}{\text{od}}
\newcommand{\diag}{\text{diag}}
\newcommand{\flow}{\text{flow}}
\DeclareMathOperator{\PD}{PD}
\newcommand{\footremember}[2]{
    \footnote{#2}
    \newcounter{#1}
    \setcounter{#1}{\value{footnote}}
}
\title{Matching Criterion for Identifiability in Sparse Factor Analysis}
\author{Nils Sturma\footremember{nils}{Technical University of Munich; \href{mailto:nils.sturma@tum.de}{nils.sturma@tum.de}}
\and
Miriam Kranzlmüller\footremember{miriam}{Technical University of Munich; \href{mailto:miriam.kranzlmueller@tum.de}{miriam.kranzlmueller@tum.de}}
\and
Irem Portakal\footremember{irem}{Max Planck Institute for Mathematics in the Sciences, Leipzig; \href{mailto:mail@irem-portakal.de}{mail@irem-portakal.de}}
\and
Mathias Drton\footremember{mathias}{Technical University of Munich, Munich Center for Machine Learning; \href{mailto:mathias.drton@tum.de}{mathias.drton@tum.de}}
}
\date{ }
\begin{document}
\maketitle

\def\spacingset#1{\renewcommand{\baselinestretch}
{#1}\small\normalsize} \spacingset{1}

\spacingset{1.1}

\begin{abstract}
Factor analysis models explain dependence among observed variables by a smaller number  of unobserved factors. A main challenge in confirmatory factor analysis is determining whether the factor loading matrix is identifiable from the observed covariance matrix. The factor loading matrix captures the linear effects of the factors and, if unrestricted, can only be identified up to an orthogonal transformation of the factors. However, in many applications the factor loadings exhibit an interesting sparsity pattern that may lead to identifiability up to column signs. We study this phenomenon by connecting sparse confirmatory factor analysis models to bipartite graphs and providing sufficient graphical conditions for identifiability of the factor loading matrix up to column signs. In contrast to previous work, our main contribution, the matching criterion, exploits sparsity by operating locally on the graph structure, thereby improving existing conditions.  Our criterion is efficiently decidable in time that is polynomial in the size of the graph, when restricting the search steps to sets of bounded size.
\end{abstract}

\section{Introduction}
In factor analysis, a potentially large set of dependent random variables is modeled as a linear combination of a smaller set of underlying latent (unobserved) factors. Factor analysis is ubiquitously applied in fields such as econometrics \citep{fan2008high, amann2016bayesian, aigner1984latent}, psychology \citep{horn1965arationale, reise2000factor, caprara1993thebig, ford1986application, goretzko2023evaluating}, epidemiology \citep{martines1998invited, santos2019principal}, and education \citep{schreiber2006reporting, beavers2013practical}.   It also has applications in causality \citep{pearl2000causality,spirtes:2000} as a building block for models with latent variables \citep{bollen1989structural, barber2022halftrek}. \looseness=-1

Let $X=(X_v)_{v \in V}$ be an observed random vector,  and let $Y=(Y_h)_{h \in \cH}$ be a latent random vector, indexed by finite sets $V$  and $\cH$, respectively. Factor analysis models postulate that each observed variable $X_v$ is a linear function of the \emph{factors} $Y_h$ and noise, that is,
\[
    X = \Lambda Y + \varepsilon,
\]
where $\Lambda = (\lambda_{vh}) \in \mathbb{R}^{|V| \times |\cH|}$ is an unknown coefficient matrix known as the \emph{factor loading matrix}. The elements of  $\varepsilon = (\varepsilon_v)_{v \in V}$  are mutually independent noise variables with mean zero and finite, positive variance.  We consider orthogonal factor analysis, which means that we assume that the latent factors  $(Y_h)_{h \in \cH}$ are mutually independent. The model further assumes that $\varepsilon$ is  independent of $Y$. Without loss of generality, we fix the scale of the factors such that $\text{Var}[Y_h]=1$ for each factor. The main object of study, the covariance matrix of the observed random vector $X$, is then given by
\begin{equation} \label{eq:cov-matrix}
    \Sigma := \text{Cov}[X] = \Lambda \Lambda^{\top} + \Omega,
\end{equation}
where $\Omega$ is a diagonal matrix with entries $\omega_{vv}= \text{Var}[\varepsilon_v]$. 

Our focus is on \emph{confirmatory} factor analysis \citep[Chap.~7]{bollen1989structural}, which pertains to a prespecified model that encodes a scientific hypothesis or was learned previously in an exploratory step. Most interest is typically in models in which the factor loading matrix $\Lambda$ is sparse. In this paper, we assume that the sparsity structure of the factor loading matrix and the number of latent factors $|\cH|$ are fixed and known. Estimation of the factor loadings in confirmatory analyses has been subject to much controversy, due to the difficulties in determining model identifiability \citep{long1983confirmatory}. A factor analysis model is identifiable if the  loading matrix $\Lambda$ can be recovered from the covariance matrix $\Sigma$ in~\eqref{eq:cov-matrix}. If $\Lambda$ is not identifiable, then its estimates are to some degree arbitrary and standard inferential methods invalid~\citep{ximenez2006montecarlo,cox2024weak}. 

In \emph{full} factor analysis, where no restrictions on the factor loading matrix are imposed \citep{drton2007algebraic}, the matrix $\Lambda$ is never identifiable, due to \emph{rotational invariance}. Indeed, for any orthogonal matrix $Q \in \mathbb{R}^{|\cH| \times |\cH|}$, the product $\widetilde{\Lambda} = \Lambda Q$ satisfies
\[
    \widetilde{\Lambda} \widetilde{\Lambda}^{\top} + \Omega = \Lambda Q Q^{\top} \Lambda^{\top} + \Omega = \Lambda \Lambda^{\top} + \Omega
\]
and, thus, $(\widetilde{\Lambda},\Omega)$ determines the same covariance matrix as $(\Lambda,\Omega)$. For this reason, prior work on full factor analysis focuses on identifiability of $\Lambda\Lambda^{\top}$ or, equivalently, of $\Omega$.  \citet{bekker1997generic} characterize generic identifiability, which refers to whether $\Omega$  can be uniquely recovered for almost all parameter choices, except for a few corner cases at the so-called Ledermann bound.
However, for models with sparsity restrictions on  $\Lambda$, the situation may improve, allowing for the identifiability of the loading matrix $\Lambda$ itself up to sign changes of the columns. Identifiability up to column sign is the best we may hope for. If we multiply $\Lambda$ with a diagonal matrix $\Psi$ with entries in $\{\pm 1\}$, then the support of $\Lambda \Psi$ is the same as the support of $\Lambda$ and it still holds that $\Lambda \Psi \Psi^{\top} \Lambda^{\top} = \Lambda \Lambda^{\top}$.

\begin{example} \label{ex:harman}
In a re-analysis of a well-known five-dimensional example of \citet[p.14]{harman1976modern}, \citet[Table 1, Column 3]{trendafilov2017sparse} apply $\ell_1$-penalization techniques and infer the following sparsity pattern in the factor loading matrix:
\[
     \Lambda^{\top} = 
     \begin{pmatrix}
        \lambda_{11} & 0 & \lambda_{31} &\lambda_{41} &\lambda_{51}\\
         0 & \lambda_{22} & 0 & \lambda_{42} 
          & \lambda_{52}
     \end{pmatrix}.
\]
This implies that the observed covariance matrix is given by
\[
    \Sigma = (\sigma_{uv}) = 
    \begin{pmatrix}    \omega_{11}+\lambda_{11}^{2}&0 &\lambda_{11}\lambda_{31}&\lambda_{11}\lambda_{41}&\lambda_{11}\lambda_{51}\\
0 &\omega_{22}+\lambda_{22}^{2}& 0 &\lambda_{22}\lambda_{42}&\lambda_{22}\lambda_{52}\\
\lambda_{11}\lambda_{31}&0&\omega_{33}+\lambda_{31}^{2}&\lambda_{31}\lambda_{41}&\lambda_{31}\lambda_{51}\\
\lambda_{11}\lambda_{41}&\lambda_{22}\lambda_{42}&\lambda_{31}\lambda_{41}&\omega_{44}+\lambda_{41}^{2}+\lambda_{52}^{2}&\lambda_{41}\lambda_{51}+\lambda_{42}\lambda_{52}\\
\lambda_{11}\lambda_{51}&\lambda_{22}\lambda_{52}&\lambda_{31}\lambda_{51}&\lambda_{41}\lambda_{51}+\lambda_{42}\lambda_{52}&\omega_{55}+\lambda_{51}^{2}+\lambda_{52}^{2}
\end{pmatrix}.
\]
For almost every choice of $\Lambda$, we have $\sigma_{34} = \lambda_{31}\lambda_{41} \neq 0$, and the formula
\[
    \sqrt{\frac{\sigma_{13} \sigma_{14} }{\sigma_{34}}} = \sqrt{\frac{\lambda_{11}\lambda_{31} \, \lambda_{11}\lambda_{41}}{\lambda_{31}\lambda_{41}}} = \sqrt{\lambda_{11}^2} =|\lambda_{11} |
\]
shows that we can recover the parameter $\lambda_{11}$ up to sign. Given $|\lambda_{11}|$, the remaining nonzero parameters of the first column of $\Lambda$ are easily found, up to $\text{sign}(\lambda_{11})$. For example, 
\[
     \text{sign}(\lambda_{11}) \lambda_{31}=\frac{\lambda_{11}\lambda_{31}}{|\lambda_{11}|} = \frac{\sigma_{13}}{|\lambda_{11}|},
\]
which is again well-defined for almost all parameter choices. Given $\Lambda_{\star,1}$ up to sign, it is then possible to identify the second column $\Lambda_{\star,2}$ up to $\text{sign}(\lambda_{22})$  using similar formulas. 
\end{example}

\begin{remark}
If  the latent factors are allowed to have arbitrary positive variance instead of fixing $\text{Var}[Y_h]=1$, then we can only hope for identifiability up to column sign and column scaling of $\Lambda$. In this case, the absolute values of the recovered factor loadings within each column can be interpreted as relative strength of effects. 
\end{remark}

The fact that sparsity improves identifiability was noted early in the literature, and there exist many methods in exploratory factor analysis that select a model that is as sparse as possible. \citet{kaiser1958varimax} and \citet{carroll1953analytical} proposed methods that are still used in modern statistical software \citep{sklearn}, optimizing over all rotations such that many factor loadings are close to zero and the remaining loadings have a large absolute value. Developing methods for recovering a sparse factor loading matrix remains a very active field of research. Examples include regularization techniques \citep{ning2011sparse, hirose2012variable, lan2014Sparse, trendafilov2017sparse, scharf2019regularization, goretzko2023regularized, lee2023optimal}, 
rotation methods \citep{liu2023rotation}, correlation thresholding \citep{kim2023structure}, and Bayesian approaches \citep{fruehwirth2025sparse, conti2014bayesian, zhao2016bayesian, rockova2016fastbayesian}.

In this paper, we study identifiability of the factor loading matrix $\Lambda$ from the population covariance matrix $\Sigma = \Lambda \Lambda^{\top} + \Omega$, where the sparsity structure of $\Lambda$ is fixed and known. Reflecting the problem's inherent difficulty, the most prominent sufficient condition for identifiability in confirmatory factor analysis is still the criterion of \citet{anderson1956statistical}, which certifies identifiability of $\Lambda \Lambda^{\top}$. Subsequently, criteria were  developed for identifying $\Lambda$ from $\Lambda \Lambda^{\top}$ up to column sign; see \citet{williams2020identification} or \citet[Section 4]{bai2012statistical}. Examples include the three-indicator rule of \citet{bollen1989structural} and the side-by-side rule of \citet{reilly1996identification}. However,  gaps remain in the existing results. As noted by \citet{hosszejni2026cover}, the model given by the sparse matrix 
\begin{equation} \label{eq:lambda-example}
    \Lambda^{\top} =
    \begin{pmatrix}
        \lambda_{11} & \lambda_{21} & 0 & \lambda_{41} & 0 & 0 \\
        0 & \lambda_{22} & \lambda_{32} & 0 & \lambda_{52} & 0 \\
        0 & 0 & \lambda_{33} & \lambda_{43} & 0 & \lambda_{63} \\
    \end{pmatrix}
\end{equation}
is identifiable up to column sign in an almost sure sense, but the criterion of \citet{anderson1956statistical} and the subsequent developments are not able to certify it.

\tikzset{
      every node/.style={circle, inner sep=0.3mm, minimum size=0.5cm, draw, thick, black, fill=white, text=black},
      every path/.style={thick}
}
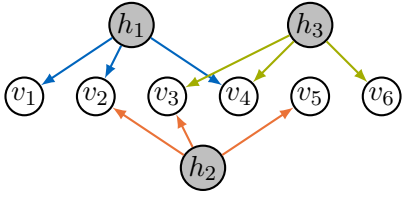
\begin{figure}[t]
\begin{center}
\begin{tikzpicture}[align=center, scale=0.95]
    \node[fill=lightgray] (h1) at (-1,1) {$h_1$};
    \node[fill=lightgray] (h2) at (0,-1) {$h_2$};
    \node[fill=lightgray] (h3) at (1.5,1) {$h_3$};
    
    \node[] (1) at (-2.5,0) {$v_1$};
    \node[] (2) at (-1.5,0) {$v_2$};
    \node[] (3) at (-0.5,0) {$v_3$};
    \node[] (4) at (0.5,0) {$v_4$};
    \node[] (5) at (1.5,0) {$v_5$};
    \node[] (6) at (2.5,0) {$v_6$};
    
    \draw[MyBlue] [-latex] (h1) edge (1);
    \draw[MyBlue] [-latex] (h1) edge (2);
    \draw[MyBlue] [-latex] (h1) edge (4);
    \draw[MyRed] [-latex] (h2) edge (2);
    \draw[MyRed] [-latex] (h2) edge (3);
    \draw[MyRed] [-latex] (h2) edge (5);
    \draw[MyGreen] [-latex] (h3) edge (3);
    \draw[MyGreen] [-latex] (h3) edge (4);
    \draw[MyGreen] [-latex] (h3) edge (6);
\end{tikzpicture}
\caption{Directed graph encoding the sparsity structure in a factor analysis model.}
\label{fig:intro-example}
\end{center}
\end{figure}

In contrast to prior work, we take a graphical perspective to specify the sparsity structure in $\Lambda$ \citep{lauritzen1996graphical, maathuis2019handbook}. For example, the graph in Figure~\ref{fig:intro-example} encodes the sparsity structure in the factor loading matrix given in Equation~\eqref{eq:lambda-example}. When an edge $h \rightarrow v$ is missing in the graph, the corresponding entry $\lambda_{vh}$ is required to be zero. Building on \citet{anderson1956statistical} and \citet{bekker1997generic}, our new \emph{matching criterion} (and an \emph{extension} thereof)  is a purely graphical criterion that exploits sparsity by operating \emph{locally} on the structure of the graph.

Deciding identifiability corresponds to solving the equation system from~\eqref{eq:cov-matrix}. Since the equations are polynomial in the factor loadings $\lambda_{vh}$, identifiability is, in principle, always decidable via Gröbner basis methods from computational algebraic geometry \citep{garcia2010identifying, barber2022halftrek}. But the scope of such methods is limited to small graphs as their complexity  can grow double exponentially with the size of the graph \citep{mayr1997somcecomplexity}. In contrast, 
our new graphical criteria can be checked in polynomial time, provided we restrict a search step to subsets of bounded size.

The organization of the paper is as follows. Section \ref{sec:preliminaries} formally 
introduces the concept of generic sign-identifiability, and we revisit the criteria of \citet{anderson1956statistical} and \citet{bekker1997generic} in Section~\ref{sec:existing-criteria}. Section~\ref{sec:identifiability} presents our main results, the matching criterion and its extension. In Section~\ref{sec:computation}, we show that both criteria are decidable in polynomial time. In Section~\ref{sec:experiments}, we conduct experiments that demonstrate the performance of our criteria and we exemplify how our identifiability criteria are also useful in exploratory factor analysis. The Appendix contains additional results for full factor models (Appendix~\ref{sec:full-factor}),  efficient algorithms  (Appendix~\ref{sec:algos}), all technical proofs (Appendix~\ref{sec:proofs}),  and an explanation of how to decide identifiability using algebraic tools (Appendix~\ref{sec:computational-algebra}). An implementation of the algorithms and code for reproducing the experiments is available at \url{https://github.com/MiriamKranzlmueller/id-factor-analysis}.

\section{Graphical Representation and Identifiability} \label{sec:preliminaries}
Let $G=(V \cup \cH, D)$ be a directed graph, where $V$ and $\cH$ are finite disjoint sets of observed and latent nodes. We assume that the graph $G=(V \cup \cH, D)$ is bipartite, that is, it only contains edges from latent to observed variables such that $D \subseteq \cH \times V$. We refer to such graphs as \emph{factor analysis graphs}. If $G$ contains an edge $(h,v) \in D$, then we also denote this by $h \rightarrow v \in D$. The set $\ch(h)=\{v \in V: h \rightarrow v \in D\}$ contains the children of a latent node $h \in \cH$, and the set $\pa(v)=\{h \in \cH: h \rightarrow v \in D\}$ contains the parents of an observed node $v \in V$. \looseness=-1 

Each bipartite graph defines a factor analysis model, which for our purposes may be identified with a set of covariance matrices. 

\begin{definition}
Let $G=(V \cup \cH, D)$ be a factor analysis graph with $|V|=p$ and $|\cH|=m$, and let $\mathbb{R}^D$ be the space of real $p\times m$ matrices $\Lambda = (\lambda_{vh})$ with support $D$, that is, $\lambda_{vh} = 0$ if $h \rightarrow v \not\in D$. The factor analysis model determined by $G$ is the image $F(G) = \im(\tau_G)$ of the parametrization 
\begin{align*}
\begin{split}
    \tau_G : \mathbb{R}^p_{>0} \times \mathbb{R}^D &\longrightarrow \PD(p)  \\
    (\Omega, \Lambda) &\longmapsto \Omega  + \Lambda \Lambda^{\top},
\end{split}
\end{align*}
where $\PD(p)$ is the cone of positive definite $p \times p$ matrices, and $\mathbb{R}^p_{>0} \subset \PD(p)$ is the subset of diagonal positive definite matrices.
\end{definition}

Identifiability holds if we can recover $\Omega$ and $\Lambda$ from a given matrix $\Sigma \in F(G)$ up to column signs of the matrix $\Lambda$. To make this precise, we write 
$$\mathcal{F}_G(\Omega, \Lambda) = \{(\widetilde{\Omega}, \widetilde{\Lambda}) \in \Theta_G : \tau_G(\widetilde{\Omega}, \widetilde{\Lambda}) = \tau_G(\Omega, \Lambda) \}$$ for the \emph{fiber} of a pair $(\Omega,
\Lambda)$ in the domain $\Theta_{G}=\mathbb{R}^{|V|}_{>0} \times \mathbb{R}^D$ of the parametrization $\tau_G$.

\begin{definition} \label{def:identifiability} 
A factor analysis graph $ G=(V \cup \mathcal{H}, D)$ is said to be \emph{generically sign-identifiable} if
\[
    \mathcal{F}_G(\Omega, \Lambda) = \{(\widetilde{\Omega}, \widetilde{\Lambda}) \in \Theta_G : \widetilde{\Omega} = \Omega \text{ and }  \widetilde{\Lambda}=\Lambda \Psi \text{ for } \Psi \in \{\pm 1\}^{|\cH| \times |\cH|} \text{ diagonal}\}
\]
for almost all $(\Omega, \Lambda) \in \Theta_G$. Moreover, we say that a node $h \in \cH$ in a factor analysis graph $G=(V \cup \mathcal{H}, D)$ is \emph{generically sign-identifiable} if it holds for almost all $(\Omega, \Lambda) \in \Theta_G$ that each parameter pair $(\widetilde{\Omega}, \widetilde{\Lambda})\in \mathcal{F}_G(\Omega, \Lambda)$ satisfies $\widetilde{\Lambda}_{\ch(h),h}=\Lambda_{\ch(h),h}$.
\end{definition}

In Definition~\ref{def:identifiability}, ``almost all'' is meant with respect to the induced Lebesgue measure on $\Theta_G$, considered as an open subset of $\mathbb{R}^{|V|+|D|}$. If a graph is generically sign-identifiable, then for a factor loading matrix $\Lambda$ and a diagonal covariance matrix $\Omega$ drawn randomly from an absolutely continuous distribution, the resulting covariance matrix of the observable vector $X$ will almost surely allow recovery of $\Lambda$ up to column-sign.

\begin{example}
Consider the identification formula for $|\lambda_{11}|$ in Example~\ref{ex:harman} given by
\[
    \sqrt{\frac{\sigma_{13} \sigma_{14} }{\sigma_{34}}} = \sqrt{\frac{\lambda_{11}\lambda_{31} \, \lambda_{11}\lambda_{41}}{\lambda_{31}\lambda_{41}}}.
\]
This formula does not hold if at least one of the parameters $\lambda_{31}$ and $\lambda_{41}$ is equal to zero. Hence for such exceptional parameter pairs $(\Omega,\Lambda)$ we can not establish the correct form of the fiber and identification fails. However, since the set of exceptional pairs forms a Lebesgue measure zero subset of the parameter space, we obtain generic sign-identifiability. 
\end{example}

Note that any node $h$ with $\ch(h)=\emptyset$ is trivially generically sign-identifiable. For later reference, we formally record how generic sign-identifiability of the graph results from generic sign-identifiability of all nodes.

\begin{lemma} \label{lem:nodes-vs-graph-id}
    A factor analysis graph $G=(V \cup \cH, D)$ is generically sign-identifiable if and only if all nodes $h \in \cH$ are generically sign-identifiable.
\end{lemma}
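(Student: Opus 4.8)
The plan is to prove the two directions separately, with the forward direction being essentially immediate from the definitions and the reverse direction requiring a careful argument that the finitely many sign ambiguities, one per latent node, can be resolved simultaneously on a common conull set of parameters.

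For the ``only if'' direction, suppose $G$ is generically sign-identifiable. Then there is a conull set $\Theta^\star \subseteq \Theta_G$ on which every $(\widetilde{\Omega}, \widetilde{\Lambda}) \in \mathcal{F}_G(\Omega,\Lambda)$ has the form $\widetilde{\Omega} = \Omega$ and $\widetilde{\Lambda} = \Lambda\Psi$ for a diagonal sign matrix $\Psi$. Fix any $h \in \cH$. For $(\Omega,\Lambda) \in \Theta^\star$ and any fiber element $(\widetilde{\Omega},\widetilde{\Lambda})$, the $h$-th column of $\widetilde{\Lambda}$ equals $\pm$ the $h$-th column of $\Lambda$; restricting to the rows in $\ch(h)$ we get $\widetilde{\Lambda}_{\ch(h),h} = \pm \Lambda_{\ch(h),h}$. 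This is not yet the definition of node sign-identifiability, which demands the plus sign. To upgrade, I would note that a column sign flip that is \emph{not} detectable must leave $\Lambda_{\ch(h),h}\Lambda_{\ch(h),h}^\top$ — and indeed all observable consequences — unchanged; but flipping the sign of a nonzero column changes $\Lambda$ itself, and the only way the fiber can still contain such a flipped matrix is if the flip is "absorbed" consistently. The cleanest route is: use Lemma~\ref{lem:nodes-vs-graph-id}'s statement only in the direction already available and instead argue directly that for generic $\Lambda$ the column $\Lambda_{\ch(h),h}$ has at least one nonzero entry whose sign is pinned down — but this requires the hypothesis that makes identifiability hold in the first place. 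Rather than circularity, the correct observation is that node sign-identifiability as \emph{defined} requires $\widetilde{\Lambda}_{\ch(h),h}=\Lambda_{\ch(h),h}$ with no sign, so the forward implication genuinely needs an extra step: one must show that if the \emph{graph} is sign-identifiable, then in fact no individual column can be flipped in isolation, i.e. the only $\Psi$ occurring in fibers over a conull set is $\Psi = I$. But that is false in general (any $\Psi$ can occur). So I expect the forward direction is actually the subtle one, and the resolution is that Definition~\ref{def:identifiability}'s graph-level condition already \emph{permits} arbitrary $\Psi$, so node sign-identifiability must be read as: the column is recovered \emph{given the graph-level sign ambiguity is the only freedom}. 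I would therefore restate: $h$ is generically sign-identifiable iff every fiber element restricted to $\ch(h)$ is obtained from $\Lambda_{\ch(h),h}$ by a sign, AND that sign is consistent across all of $h$'s appearances — which since $h$ appears only in column $h$, is automatic. So the forward direction reduces to: graph sign-identifiable $\Rightarrow$ each fiber element is $\Lambda\Psi$ $\Rightarrow$ column $h$ of any fiber element is $\pm\Lambda_{\star,h}$, and this \emph{is} what node sign-identifiability should mean; I would reconcile the definition's literal "$=$" by observing it implicitly quantifies over fiber elements already normalized so that column signs match, or equivalently by the remark that flipping is harmless.

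For the ``if'' direction, assume every $h \in \cH$ is generically sign-identifiable. For each $h$ let $\Theta_h \subseteq \Theta_G$ be the conull set on which the node condition holds, and set $\Theta^\star = \bigcap_{h \in \cH} \Theta_h$; since $\cH$ is finite, $\Theta^\star$ is conull. Fix $(\Omega,\Lambda) \in \Theta^\star$ and let $(\widetilde{\Omega},\widetilde{\Lambda}) \in \mathcal{F}_G(\Omega,\Lambda)$ be arbitrary. For each $h$ the node condition gives $\widetilde{\Lambda}_{\ch(h),h} = \Lambda_{\ch(h),h}$, and since all entries of column $h$ outside $\ch(h)$ are zero in both matrices (both lie in $\mathbb{R}^D$), we get $\widetilde{\Lambda}_{\star,h} = \Lambda_{\star,h}$ for every $h$, hence $\widetilde{\Lambda} = \Lambda$ (so $\Psi = I$ works). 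It then remains to recover $\widetilde{\Omega} = \Omega$: from $\tau_G(\widetilde{\Omega},\widetilde{\Lambda}) = \tau_G(\Omega,\Lambda)$ and $\widetilde{\Lambda} = \Lambda$ we get $\widetilde{\Omega} + \Lambda\Lambda^\top = \Omega + \Lambda\Lambda^\top$, so $\widetilde{\Omega} = \Omega$. This shows the fiber has exactly the required form, completing the reverse direction.

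The main obstacle is the bookkeeping in the forward direction: making precise why the node-level definition (with the literal equality $\widetilde{\Lambda}_{\ch(h),h} = \Lambda_{\ch(h),h}$) is the correct "coordinate-wise" shadow of the graph-level definition (which allows an arbitrary diagonal sign matrix $\Psi$). The resolution is that since each latent node $h$ indexes exactly one column, applying a sign flip to that column is the same as replacing $\Lambda$ by $\Lambda\Psi$ for the $\Psi$ that is $-1$ in position $(h,h)$ and $+1$ elsewhere; such a replacement keeps the pair inside $\mathcal{F}_G$ and inside $\Theta_G$. Consequently, when we say "$h$ is generically sign-identifiable," we are implicitly selecting, for each fiber element, the representative whose $h$-th column sign agrees with that of $\Lambda$ — which is always possible — and the content of the condition is that \emph{after this normalization} the column is exactly $\Lambda_{\ch(h),h}$ with \emph{no remaining ambiguity in magnitude or relative signs within the column}. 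With this reading, the forward direction is just: graph sign-identifiable $\Rightarrow$ for a.e.\ $(\Omega,\Lambda)$, every fiber element is $(\Omega, \Lambda\Psi)$, so after normalizing the $h$-th column sign it is $(\Omega,\Lambda)$ on that column, giving the node condition. I would spell this normalization out in one or two sentences and otherwise keep the proof short.
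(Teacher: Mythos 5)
Your proposal is essentially correct and follows the same route as the paper: the forward direction is read off from Definition~\ref{def:identifiability}, and the reverse direction intersects the finitely many conull sets, reconstructs $\widetilde{\Lambda}$ column by column, and then recovers $\widetilde{\Omega}$ from $\tau_G(\widetilde{\Omega},\widetilde{\Lambda})=\tau_G(\Omega,\Lambda)$. One inconsistency to repair: having (correctly) settled on the up-to-sign reading of node identifiability in the forward direction --- the reading the paper itself uses, e.g.\ in the proof of Theorem~\ref{thm:identifiability}, where node identifiability of $\ell$ is invoked as $\widetilde{\Lambda}_{\ch(\ell),\ell}=a_\ell\Lambda_{\ch(\ell),\ell}$ with $a_\ell\in\{\pm1\}$ --- you must use the same reading in the reverse direction. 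There each column is only recovered up to a sign $a_h$, so you get $\widetilde{\Lambda}=\Lambda\Psi$ with $\Psi=\diag(a_h)$ rather than $\Psi=I$, and $\widetilde{\Omega}=\Omega$ then follows from $\widetilde{\Lambda}\widetilde{\Lambda}^{\top}=\Lambda\Psi\Psi^{\top}\Lambda^{\top}=\Lambda\Lambda^{\top}$, which is exactly the paper's computation; this is a one-line fix that does not change the structure of your argument.
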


\begin{remark} \label{rem:dimension}
A model can only be generically sign-identifiable if its dimension matches the parameter count $|V|+|D|$. Recently, \citet{drton2025algebraic} proved upper and lower bounds  for the dimension of sparse factor analysis models. The bounds reveal that such models may have dimension strictly smaller  than $|V|+|D|$ and, thus, may be non-identifiable. The bounds also show  that  a necessary condition for a factor analysis graph to be generically sign-identifiable is that each latent node has at least three children. \looseness=-1 
\end{remark}

\section{Existing Criteria} \label{sec:existing-criteria}
Due to rotational indeterminacy, previous work on identifiability of full factor analysis
models focused on identifying the diagonal matrix $\Omega$. If we  require that the upper triangle of the matrix $\Lambda$ is zero, then  existing criteria may also yield generic sign-identifiability. 

\begin{definition} \label{def:ZUTA}
A factor analysis graph $G=(V \cup \cH, D)$ satisfies the \emph{Zero Upper Triangular Assumption (ZUTA)} if there exists an ordering $\prec$ on the latent nodes $\mathcal{H}$ such that
$\ch(h)$ is not contained in $\bigcup_{\ell \succ h} \ch(\ell)$ for all $h \in \cH$.  In this case, we say that $\prec$ is a \emph{ZUTA-ordering} with respect to $G$. 
\end{definition}

ZUTA ensures that the rows and columns of the factor loading matrix $\Lambda$ can be permuted such that the upper triangle of the matrix is zero. Note that ZUTA eliminates rotational indeterminacy. That is, if it holds that $\Sigma - \Omega = \widetilde{\Lambda}  \widetilde{\Lambda}^{\top}$ for a matrix $\widetilde{\Lambda}$ that is zero upper triangular, i.e., $\widetilde{\Lambda}_{ij}=0$ for $i < j$, then it follows from the uniqueness of the Cholesky decomposition that $\widetilde{\Lambda}$ is unique up to column sign, i.e., $\widetilde{\Lambda} = \Lambda \Psi$ for a fixed  matrix $\Lambda$ and a diagonal matrix $\Psi$ with entries in $\{\pm 1\}$.

If a factor analysis graph satisfies the ZUTA condition, then there is an observed node $v_h \in \ch(h)$ for each $h \in \cH$ such that $v_h \in \ch(h)$ and $v_h \not\in \bigcup_{\ell \succ h} \ch(\ell)$.  In particular, it is a necessary condition for ZUTA that there is at least one observed node that only has one latent parent.

\tikzset{
      every node/.style={circle, inner sep=0.3mm, minimum size=0.5cm, draw, thick, black, fill=white, text=black},
      every path/.style={thick}
}
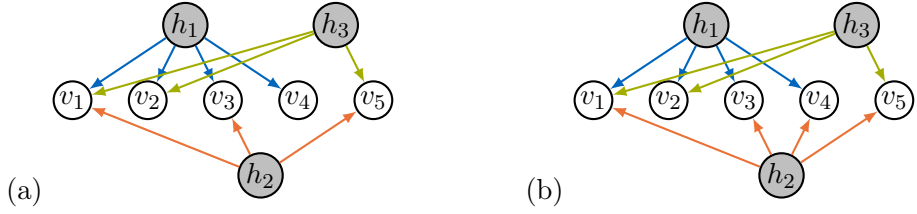
\begin{figure}[t]
\centering
{(a)
\begin{tikzpicture}[align=center]
    \node[fill=lightgray] (h1) at (-1,1) {$h_1$};
    \node[fill=lightgray] (h2) at (0,-1) {$h_2$};
    \node[fill=lightgray] (h3) at (1,1) {$h_3$};
    
    \node[] (1) at (-2.5,0) {$v_1$};
    \node[] (2) at (-1.5,0) {$v_2$};
    \node[] (3) at (-0.5,0) {$v_3$};
    \node[] (4) at (0.5,0) {$v_4$};
    \node[] (5) at (1.5,0) {$v_5$};
    
    \draw[MyBlue] [-latex] (h1) edge (1);
    \draw[MyBlue] [-latex] (h1) edge (2);
    \draw[MyBlue] [-latex] (h1) edge (3);
    \draw[MyBlue] [-latex] (h1) edge (4);
    \draw[MyRed] [-latex] (h2) edge (1);
    \draw[MyRed] [-latex] (h2) edge (3);
    \draw[MyRed] [-latex] (h2) edge (5);
    \draw[MyGreen] [-latex] (h3) edge (1);
    \draw[MyGreen] [-latex] (h3) edge (2);
    \draw[MyGreen] [-latex] (h3) edge (5);
\end{tikzpicture}
\hspace{1.5cm}
(b)
\begin{tikzpicture}[align=center]
   \node[fill=lightgray] (h1) at (-1,1) {$h_1$};
    \node[fill=lightgray] (h2) at (0,-1) {$h_2$};
    \node[fill=lightgray] (h3) at (1,1) {$h_3$};
    
    \node[] (1) at (-2.5,0) {$v_1$};
    \node[] (2) at (-1.5,0) {$v_2$};
    \node[] (3) at (-0.5,0) {$v_3$};
    \node[] (4) at (0.5,0) {$v_4$};
    \node[] (5) at (1.5,0) {$v_5$};
    
    \draw[MyBlue] [-latex] (h1) edge (1);
    \draw[MyBlue] [-latex] (h1) edge (2);
    \draw[MyBlue] [-latex] (h1) edge (3);
    \draw[MyBlue] [-latex] (h1) edge (4);
    \draw[MyRed] [-latex] (h2) edge (1);
    \draw[MyRed] [-latex] (h2) edge (3);
    \draw[MyRed] [-latex] (h2) edge (4);
    \draw[MyRed] [-latex] (h2) edge (5);
    \draw[MyGreen] [-latex] (h3) edge (1);
    \draw[MyGreen] [-latex] (h3) edge (2);
    \draw[MyGreen] [-latex] (h3) edge (5);
\end{tikzpicture}
}
\caption{Two factor analysis graphs. Graph (a) satisfies ZUTA while graph (b) does not. \\}
\label{fig:ZUTA-example}
\end{figure}

\begin{example}
The graph in Figure~\ref{fig:ZUTA-example} (a)  satisfies ZUTA with the ordering $h_1 \prec h_2 \prec h_3$, since $v_4 \in \ch(h_1)$ but $v_4 \not\in \ch(h_2) \cup \ch(h_3)$, and $v_3 \in \ch(h_2)$ but $v_3 \not\in \ch(h_3)$. However, the graph in Figure~\ref{fig:ZUTA-example} (b) does not satisfy ZUTA as no observed node has only one parent.
\end{example}

\begin{remark}
ZUTA is equivalent to the \textit{generalized lower triangular assumption} introduced in \citet{fruehwirth2025sparse}, which operates directly on the matrix $\Lambda$. ZUTA refers to the graph, which is useful to present our graphical criteria in Section~\ref{sec:identifiability}.
\end{remark}

If we consider graphs that satisfy ZUTA, many criteria in the literature directly yield generic sign-identifiability. The most prominent condition for identifiability is still the criterion of \citet{anderson1956statistical}. Since it is originally stated as a pointwise condition, it is also applicable to sparse graphs. To state the result one obtains, we treat the entries of $\Lambda$ as indeterminates and say that a submatrix is generically of rank $k$ if it has rank $k$ for almost all choices of $\Lambda \in \mathbb{R}^D$. Under the assumption that a graph satisfies ZUTA, Theorem 5.1 in \citet{anderson1956statistical} then translates to the following sufficient condition for generic sign-identifiability.

\begin{theorem}[AR-identifiability] \label{thm:anderson-rubin}
Let $G=(V \cup \mathcal{H}, D)$ be a factor analysis graph that satisfies ZUTA.  Then, $G$ is generically sign-identifiable if for any deleted row of $\Lambda = (\lambda_{vh}) \in \mathbb{R}^D$ there remain two disjoint submatrices that are generically of rank $|\cH|$.
\end{theorem}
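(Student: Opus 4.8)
The plan is to split the task into (i) recovering the diagonal matrix $\Omega$ from $\Sigma$, and (ii) recovering $\Lambda$ up to column signs from $\Lambda\Lambda^{\top} = \Sigma - \Omega$. Step (ii) is already supplied by the discussion preceding the theorem: since $G$ satisfies ZUTA, every matrix with support in $D$ becomes zero upper triangular after a suitable simultaneous permutation of rows and columns, and uniqueness of the (signed) Cholesky decomposition then forces any two such matrices with the same Gram matrix to agree up to column signs. So the substance is step (i), which I would argue fiber-wise: fix $(\Omega,\Lambda) \in \Theta_G$ in a co-null set $U$ to be specified, set $\Sigma = \tau_G(\Omega,\Lambda)$, take an arbitrary $(\widetilde\Omega,\widetilde\Lambda) \in \mathcal{F}_G(\Omega,\Lambda)$, and prove $\widetilde\Omega = \Omega$; combined with (ii) this gives exactly the form of the fiber required in Definition~\ref{def:identifiability}.

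The engine is that $\Omega$ and $\widetilde\Omega$ are diagonal, so off-diagonal blocks of $\Sigma$ coincide with the corresponding blocks of both $\Lambda\Lambda^{\top}$ and $\widetilde\Lambda\widetilde\Lambda^{\top}$: for disjoint $A,B \subseteq V$ one has $\Sigma_{A,B} = \Lambda_{A,\star}\Lambda_{B,\star}^{\top} = \widetilde\Lambda_{A,\star}\widetilde\Lambda_{B,\star}^{\top}$, and likewise $\Sigma_{A,v} = \Lambda_{A,\star}\Lambda_{v,\star}^{\top} = \widetilde\Lambda_{A,\star}\widetilde\Lambda_{v,\star}^{\top}$ whenever $v \notin A$. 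Now fix $v \in V$ and let $A_v, B_v \subseteq V \setminus \{v\}$ be the disjoint row sets furnished by the hypothesis for the deletion of row $v$; on $U$ both $\Lambda_{A_v,\star}$ and $\Lambda_{B_v,\star}$ have rank $|\cH|$, hence $\Sigma_{A_v,B_v}$ has rank $|\cH|$, which forces $\widetilde\Lambda_{A_v,\star}$ and $\widetilde\Lambda_{B_v,\star}$ to have rank $|\cH|$ as well — so the rank condition is automatically inherited by every loading matrix in the fiber, not just the true one. Since the rows of $\Lambda_{B_v,\star}^{\top}$ span the row space of $\Sigma_{A_v,B_v}$ and $\Sigma_{v,B_v} = \Lambda_{v,\star}\Lambda_{B_v,\star}^{\top}$ lies in that space, some vector $c$ solves $c\,\Sigma_{A_v,B_v} = \Sigma_{v,B_v}$; cancelling the right factors $\Lambda_{B_v,\star}^{\top}$ and $\widetilde\Lambda_{B_v,\star}^{\top}$, which both have full row rank $|\cH|$, shows that any such $c$ satisfies $c\,\Lambda_{A_v,\star} = \Lambda_{v,\star}$ and $c\,\widetilde\Lambda_{A_v,\star} = \widetilde\Lambda_{v,\star}$. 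Then the scalar $c\,\Sigma_{A_v,v}$ equals both $(c\Lambda_{A_v,\star})\Lambda_{v,\star}^{\top} = \Lambda_{v,\star}\Lambda_{v,\star}^{\top} = \sigma_{vv} - \omega_{vv}$ and $(c\widetilde\Lambda_{A_v,\star})\widetilde\Lambda_{v,\star}^{\top} = \widetilde\Lambda_{v,\star}\widetilde\Lambda_{v,\star}^{\top} = \sigma_{vv} - \widetilde\omega_{vv}$, so $\omega_{vv} = \widetilde\omega_{vv}$. Ranging over $v \in V$ gives $\widetilde\Omega = \Omega$, hence $\widetilde\Lambda\widetilde\Lambda^{\top} = \Lambda\Lambda^{\top}$, and step (ii) concludes.

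What remains is to pin down $U$, and this is the only delicate point. I would take $U$ to be the complement of the zero loci of the finitely many $|\cH| \times |\cH|$ minors that must be nonzero — one minor witnessing $\rk\Lambda_{A_v,\star} = |\cH|$ and one witnessing $\rk\Lambda_{B_v,\star} = |\cH|$ for each $v \in V$ — intersected with whatever Zariski-open condition is needed to apply the ZUTA/Cholesky step (e.g.\ non-vanishing of the relevant leading principal minors in the triangularizing order). Each such condition cuts out a nonempty Zariski-open subset of $\Theta_G$, because the hypothesis asserts the ranks are attained generically and ZUTA guarantees the triangular structure; a finite intersection of nonempty Zariski-open sets is again nonempty and Zariski-open, hence co-null for the Lebesgue measure on $\Theta_G \subseteq \mathbb{R}^{|V|+|D|}$. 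The rest is the short local computation above, performed on only the $|A_v| + |B_v| + 1$ rows of $\Lambda$ attached to $v$; the genericity bookkeeping, not the algebra, is where care is needed, and I do not anticipate any genuine obstruction beyond keeping track of which minors must be kept nonzero.
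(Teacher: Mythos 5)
Your proposal is correct, and its overall architecture coincides with the paper's: first show that any member $(\widetilde\Omega,\widetilde\Lambda)$ of the fiber has $\widetilde\Omega=\Omega$, then use ZUTA to upgrade $\Lambda\Lambda^{\top}=\widetilde\Lambda\widetilde\Lambda^{\top}$ to $\widetilde\Lambda=\Lambda\Psi$ with $\Psi$ a $\pm1$ diagonal matrix. The difference is one of self-containedness rather than substance: the paper treats the $\Omega$-step as a black box, invoking \citet{anderson1956statistical}'s pointwise Theorem~5.1 and only arguing that the generic rank hypothesis puts almost every $(\Omega,\Lambda)$ in the set where that theorem applies, and it delegates the second step to Lemma~\ref{lem:global-id-follows-sign-id}, whose proof passes through full column rank, pseudoinverses, an orthogonal matrix $Q$, and a nonsingular lower-triangular submatrix forcing $Q$ to be diagonal with entries $\pm1$. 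You instead reprove the $\Omega$-step inline — your linear-elimination argument (solve $c\,\Sigma_{A_v,B_v}=\Sigma_{v,B_v}$, cancel the full-row-rank factors $\Lambda_{B_v,\star}^{\top}$ and $\widetilde\Lambda_{B_v,\star}^{\top}$, evaluate $c\,\Sigma_{A_v,v}$) is exactly Anderson and Rubin's classical proof, and your observation that the rank condition is inherited by every loading matrix in the fiber is the point that makes it work fiber-wise — and you handle the ZUTA step via uniqueness of the rectangular Cholesky factorization rather than via the orthogonal-transformation route of Lemma~\ref{lem:global-id-follows-sign-id}. Both variants are sound (for the Cholesky step one only needs the leading triangular block of the permuted true $\Lambda$ to be nonsingular, a generic condition, since invertibility of the corresponding block of $\widetilde\Lambda$ then follows from positive definiteness), and your genericity bookkeeping via finitely many nonvanishing minors is exactly what is needed; what your version buys is a proof that does not rely on quoting the original Anderson--Rubin theorem, at the cost of redoing an argument the paper deliberately outsources.
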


If generic sign-identifiability can be proven by applying Theorem~\ref{thm:anderson-rubin} for a factor analysis graph, then we say that the graph is \emph{AR-identifiable}. 

\tikzset{
      every node/.style={circle, inner sep=0.3mm, minimum size=0.5cm, draw, thick, black, fill=white, text=black},
      every path/.style={thick}
}
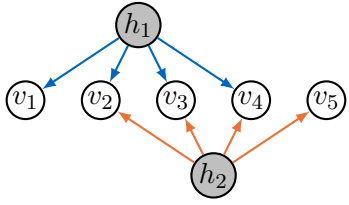
\begin{figure}[t]
\begin{center}
\begin{tikzpicture}[align=center]
    \node[fill=lightgray] (h1) at (-1,1) {$h_1$};
    \node[fill=lightgray] (h2) at (0,-1) {$h_2$};
    
    \node[] (1) at (-2.5,0) {$v_1$};
    \node[] (2) at (-1.5,0) {$v_2$};
    \node[] (3) at (-0.5,0) {$v_3$};
    \node[] (4) at (0.5,0) {$v_4$};
    \node[] (5) at (1.5,0) {$v_5$};
    
    \draw[MyBlue] [-latex] (h1) edge (1);
    \draw[MyBlue] [-latex] (h1) edge (2);
    \draw[MyBlue] [-latex] (h1) edge (3);
    \draw[MyBlue] [-latex] (h1) edge (4);
    \draw[MyRed] [-latex] (h2) edge (2);
    \draw[MyRed] [-latex] (h2) edge (3);
    \draw[MyRed] [-latex] (h2) edge (4);
    \draw[MyRed] [-latex] (h2) edge (5);
\end{tikzpicture}
\caption{AR-identifiable factor analysis graph.}
\label{fig:AR-example}
\end{center}
\end{figure}

\begin{example} \label{ex:AR-identifiable}
The graph in Figure~\ref{fig:AR-example} gives rise to the transpose of $\Lambda \in \mathbb{R}^D$ given by
\[
    \Lambda^{\top} = \begin{pmatrix}
    \lambda_{v_1 h_1} & \lambda_{v_2 h_1} & \lambda_{v_3 h_1} & \lambda_{v_4 h_1} & 0 \\
    0 & \lambda_{v_3 h_2} & \lambda_{v_3 h_2} & \lambda_{v_4 h_2} & \lambda_{v_5 h_2} 
    \end{pmatrix}.
\]
Deleting any row of $\Lambda$ leaves $4$ rows that can always be split in two $2 \times 2$ matrices that generically have rank 2.  Hence, the graph is AR-identifiable.
\end{example}

AR-identifiability requires $|V| \geq 2 |\cH|+1$. For general \emph{full} factor analysis models, \citet{bekker1997generic} solve the problem of generic identifiability (up to orthogonal transformation) in all but certain edge cases. However, the generic nature of their condition implies sign-identifiability results only for dense ZUTA graphs, in which only a permuted upper triangle vanishes.

\begin{definition} \label{def:full-ZUTA-graph}
    A \emph{full-ZUTA graph} is a factor analysis graph $G=(V \cup \cH, D)$ that satisfies ZUTA but contains all other possible edges. That is, there is an ordering $\prec$ on the latent nodes $\cH=\{h_1, \ldots, h_m\}$  such that $h_1 \prec \cdots \prec h_m$, with the property that $\ch(h_1)=V$ and $\ch(h_{i+1})=\ch(h_{i}) \setminus \{v_{i}\}$ for some child  $v_{i} \in \ch(h_{i})$.
\end{definition}

\tikzset{
      every node/.style={circle, inner sep=0.3mm, minimum size=0.5cm, draw, thick, black, fill=white, text=black},
      every path/.style={thick}
}
\begin{figure}[t]
\centering
{(a)
\begin{tikzpicture}[align=center]
    \node[fill=lightgray] (h1) at (-1,1) {$h_1$};
    \node[fill=lightgray] (h2) at (0.5,-1) {$h_2$};
    \node[fill=lightgray] (h3) at (1,1) {$h_3$};
    
    \node[] (1) at (-2.5,0) {$v_1$};
    \node[] (2) at (-1.5,0) {$v_2$};
    \node[] (3) at (-0.5,0) {$v_3$};
    \node[] (4) at (0.5,0) {$v_4$};
    \node[] (5) at (1.5,0) {$v_5$};
    \node[] (6) at (2.5,0) {$v_6$};
    
    \draw[MyBlue] [-latex] (h1) edge (1);
    \draw[MyBlue] [-latex] (h1) edge (2);
    \draw[MyBlue] [-latex] (h1) edge (3);
    \draw[MyBlue] [-latex] (h1) edge (4);
    \draw[MyBlue] [-latex] (h1) edge (5);
    \draw[MyBlue] [-latex] (h1) edge (6);
    \draw[MyRed] [-latex] (h2) edge (2);
    \draw[MyRed] [-latex] (h2) edge (3);
    \draw[MyRed] [-latex] (h2) edge (4);
    \draw[MyRed] [-latex] (h2) edge (5);
    \draw[MyRed] [-latex] (h2) edge (6);
    \draw[MyGreen] [-latex] (h3) edge (3);
    \draw[MyGreen] [-latex] (h3) edge (4);
    \draw[MyGreen] [-latex] (h3) edge (5);
    \draw[MyGreen] [-latex] (h3) edge (6);
\end{tikzpicture}
\hspace{0.5cm}
(b)
\begin{tikzpicture}[align=center]
    \node[fill=lightgray] (h1) at (-1,1) {$h_1$};
    \node[fill=lightgray] (h2) at (0.5,-1) {$h_2$};
    \node[fill=lightgray] (h3) at (1,1) {$h_3$};
    
    \node[] (1) at (-2.5,0) {$v_1$};
    \node[] (2) at (-1.5,0) {$v_2$};
    \node[] (3) at (-0.5,0) {$v_3$};
    \node[] (4) at (0.5,0) {$v_4$};
    \node[] (5) at (1.5,0) {$v_5$};
    \node[] (6) at (2.5,0) {$v_6$};
    \node[] (7) at (3.5,0) {$v_7$};
    
    \draw[MyBlue] [-latex] (h1) edge (1);
    \draw[MyBlue] [-latex] (h1) edge (2);
    \draw[MyBlue] [-latex] (h1) edge (3);
    \draw[MyBlue] [-latex] (h1) edge (4);
    \draw[MyBlue] [-latex] (h1) edge (5);
    \draw[MyBlue] [-latex] (h1) edge (6);
    \draw[MyBlue] [-latex] (h1) edge (7);
    \draw[MyRed] [-latex] (h2) edge (2);
    \draw[MyRed] [-latex] (h2) edge (3);
    \draw[MyRed] [-latex] (h2) edge (4);
    \draw[MyRed] [-latex] (h2) edge (5);
    \draw[MyRed] [-latex] (h2) edge (6);
    \draw[MyRed] [-latex] (h2) edge (7);
    \draw[MyGreen] [-latex] (h3) edge (3);
    \draw[MyGreen] [-latex] (h3) edge (4);
    \draw[MyGreen] [-latex] (h3) edge (5);
    \draw[MyGreen] [-latex] (h3) edge (6);
    \draw[MyGreen] [-latex] (h3) edge (7);
\end{tikzpicture}
}
\caption{Two full-ZUTA graphs. \\}
\label{fig:full-ZUTA}
\end{figure}
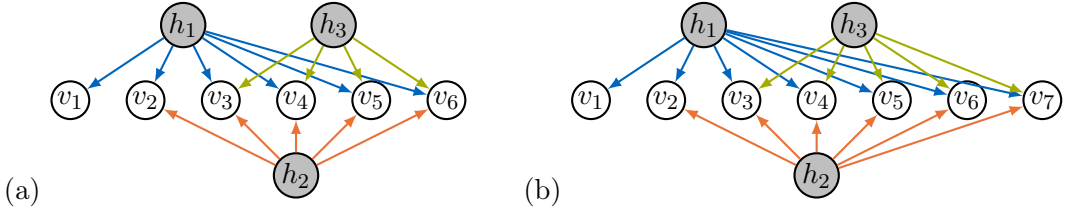

As an example, Figure~\ref{fig:full-ZUTA} (a) displays the full-ZUTA graph on 3 latent and 6 observed nodes. For full-ZUTA graphs, the criterion from \citet{bekker1997generic} directly translates into the following sufficient condition for generic sign-identifiability.

\begin{theorem}[BB-identifiability] \label{thm:bekker-berge}
Let $G=(V \cup \mathcal{H}, D)$ be a full-ZUTA graph. Then, $G$ is generically  sign-identifiable if $|V| + |D| < \binom{|V|+1}{2}$.
\end{theorem}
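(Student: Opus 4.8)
The plan is to reduce \Cref{thm:bekker-berge} to the classical result of \citet{bekker1997generic} on generic identifiability of the error variances $\Omega$ in the \emph{unrestricted} $|\cH|$-factor model, and then to upgrade identifiability of $\Lambda\Lambda^{\top}$ to identifiability of $\Lambda$ up to column sign using the uniqueness of the Cholesky decomposition already explained after Definition~\ref{def:ZUTA}. Write $p=|V|$ and $m=|\cH|$.

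The first step is combinatorial bookkeeping together with a normal-form computation. By Definition~\ref{def:full-ZUTA-graph} we have $|\ch(h_i)|=p-i+1$, so $|D|=\sum_{i=1}^m (p-i+1)=pm-\binom{m}{2}$, and the hypothesis $|V|+|D|<\binom{|V|+1}{2}$ is equivalent to $(p-m)^2>p+m$, i.e.\ to $m$ lying strictly below the Ledermann bound. Ordering the rows of $\Lambda$ so that $v_1,\dots,v_m$ come first (with $v_i$ the child removed at step $i$ and $v_m$ an arbitrary element of $\ch(h_m)$), the support $D$ becomes block lower triangular: the leading $m\times m$ block of $\Lambda$ is lower triangular and the remaining $(p-m)\times m$ block is unconstrained. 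In particular $F(G)\subseteq\mathcal{M}:=\{\Omega+MM^{\top}:\Omega\in\mathbb{R}^p_{>0},\ M\in\mathbb{R}^{p\times m}\}$, the image of the unrestricted $m$-factor parametrization. Conversely, for a generic rank-$m$ matrix $M$ the leading $m\times m$ block $M_1$ is invertible; writing an LQ-decomposition $M_1=LQ$ with $L$ lower triangular and $Q$ orthogonal, the matrix $MQ^{\top}$ has support in $D$ and satisfies $(MQ^{\top})(MQ^{\top})^{\top}=MM^{\top}$, so $\Omega+MM^{\top}\in F(G)$. Hence $\mathcal{M}\setminus F(G)$ has measure zero, $F(G)$ is co-null in $\mathcal{M}$, and $\dim F(G)=\dim\mathcal{M}=|V|+|D|$ (the model has the expected dimension since $m$ is below the Ledermann bound, cf.\ Remark~\ref{rem:dimension}).

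The second step applies \citet{bekker1997generic}: since $m$ is strictly below the Ledermann bound, none of their exceptional ``corner'' cases arise, so $\Omega$ is generically globally identified in $\mathcal{M}$. Concretely, there is a measure-zero set $E\subseteq\mathcal{M}$ such that for every $\Sigma\notin E$ there is a unique diagonal $\widetilde\Omega\in\mathbb{R}^p_{>0}$ with $\Sigma-\widetilde\Omega$ positive semidefinite of rank at most $m$. Enlarging $E$ by a further measure-zero set so that in addition $\Sigma-\widetilde\Omega$ has rank exactly $m$ and its leading $m\times m$ block is nonsingular, the uniqueness-of-Cholesky argument recalled after Definition~\ref{def:ZUTA} applies to that block and propagates to the remaining rows, showing that every $\widetilde\Lambda\in\mathbb{R}^D$ with $\widetilde\Lambda\widetilde\Lambda^{\top}=\Sigma-\widetilde\Omega$ equals $L_\Sigma\Psi$ for a fixed matrix $L_\Sigma$ depending only on $\Sigma$ and a diagonal sign matrix $\Psi\in\{\pm1\}^{m\times m}$. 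It remains to transfer genericity from the model back to $\Theta_G$: since $\tau_G$ is a polynomial parametrization with image $F(G)$ of dimension $|V|+|D|$, its differential has full rank outside a proper algebraic (hence null) subset of $\Theta_G$, so off that subset $\tau_G$ is a submersion onto a manifold of matching dimension and pulls back null sets to null sets; and because $F(G)$ is co-null in $\mathcal{M}$, the set $E\cap F(G)$ is null in $F(G)$, whence $\tau_G^{-1}(E)$ is null in $\Theta_G$. For every $(\Omega,\Lambda)\in\Theta_G$ outside $\tau_G^{-1}(E)$ and outside the proper algebraic subset on which $\Lambda$ fails the genericity conditions above, any $(\widetilde\Omega,\widetilde\Lambda)\in\mathcal{F}_G(\Omega,\Lambda)$ satisfies $\widetilde\Omega=\Omega$ by \citet{bekker1997generic} and hence $\widetilde\Lambda=\Lambda\Psi$ by Cholesky uniqueness, which is exactly generic sign-identifiability.

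The main difficulty will be pinning down the precise statement of \citet{bekker1997generic} and verifying that the strict inequality $|V|+|D|<\binom{|V|+1}{2}$ genuinely excludes all of their corner cases at the Ledermann bound; the surrounding manipulations (the LQ-reduction, the rank-$m$ Cholesky-uniqueness step for the rectangular factor, and the measure-theoretic transfer between the three parametrizations involved) are routine but must be carried out with the genericity hypotheses --- invertibility of the leading $m\times m$ block and exact rank $m$ --- stated explicitly so that the exceptional sets genuinely have measure zero.
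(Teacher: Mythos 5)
Your proposal is correct and follows essentially the same route as the paper: the paper also reduces Theorem~\ref{thm:bekker-berge} to \citet{bekker1997generic} for the unrestricted factor model (via Lemma~\ref{lem:full-factor-models}, which shows $F(G)$ equals the full-factor model by rotating any loading matrix to zero-upper-triangular form) and then upgrades generic identifiability of $\Omega$ to sign-identifiability of $\Lambda$ through the triangular/Cholesky-uniqueness argument (Lemma~\ref{lem:global-id-follows-sign-id}). Your LQ-decomposition/co-null step and the submersion-based transfer of null sets are just slightly different packagings of the same two lemmas, so the substance of the argument coincides with the paper's.
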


If a full-ZUTA graph is generic sign-identifiability by Theorem~\ref{thm:bekker-berge}, then we term the graph \emph{BB-identifiable}. Note that $|V| + |D| = |V|(|\cH|+1) - \binom{|\cH|}{2}$ in a full-ZUTA graph. If $|V| + |D| > \binom{|V|+1}{2}$, then the parameter count is larger than the dimension of the ambient space of symmetric matrices, and full-ZUTA graphs are not generically sign-identifiable; recall Remark~\ref{rem:dimension}. Hence, the only remaining open cases where identifiability of full-ZUTA graphs is unknown are models ``at the Ledermann bound'' where $|V| + |D| = \binom{|V|+1}{2}$. 

\begin{example}
    Figure~\ref{fig:full-ZUTA} shows two full-ZUTA graphs. Graph (b) is BB-identifiable because
    $
        |V| + |D| = 24 \, < \, 28 = \binom{7+1}{2}.
    $
    Graph (a), on the other hand, has
    $
        |V| + |D| = \, 21 \, = \binom{6+1}{2}.
    $
    As already noted by \citet{wilson1939theresolution}, the fiber for graph (a), with $|V|=6$ and $|\cH|=3$, generically contains two diagonal matrices and two corresponding factor loading matrices together with their symmetries given by the sign changes of the columns.
\end{example}

\tikzset{
      every node/.style={circle, inner sep=0.3mm, minimum size=0.5cm, draw, thick, black, fill=white, text=black},
      every path/.style={thick}
}
\begin{figure}[t]
\begin{center}
(a)
\begin{tikzpicture}[align=center]
    \node[fill=lightgray] (h1) at (-0.5,1) {$h_1$};
    \node[fill=lightgray] (h2) at (0,-1) {$h_2$};
    
    \node[] (1) at (-2.5,0) {$v_1$};
    \node[] (2) at (-1.5,0) {$v_2$};
    \node[] (3) at (-0.5,0) {$v_3$};
    \node[] (4) at (0.5,0) {$v_4$};
    \node[] (5) at (1.5,0) {$v_5$};
    
    \draw[MyBlue] [-latex] (h1) edge (1);
    \draw[MyBlue] [-latex] (h1) edge (2);
    \draw[MyBlue] [-latex] (h1) edge (3);
    \draw[MyBlue] [-latex] (h1) edge (4);
    \draw[MyBlue] [-latex] (h1) edge (5);
    \draw[MyRed] [-latex] (h2) edge (2);
    \draw[MyRed] [-latex] (h2) edge (3);
    \draw[MyRed] [-latex] (h2) edge (4);
    \draw[MyRed] [-latex] (h2) edge (5);
\end{tikzpicture}
\hspace{2cm}
(b)
\begin{tikzpicture}[align=center]
    \node[fill=lightgray] (h1) at (-0.5,1) {$h_1$};
    \node[fill=lightgray] (h2) at (1,-1) {$h_2$};
    
    \node[] (1) at (-2.5,0) {$v_1$};
    \node[] (2) at (-1.5,0) {$v_2$};
    \node[] (3) at (-0.5,0) {$v_3$};
    \node[] (4) at (0.5,0) {$v_4$};
    \node[] (5) at (1.5,0) {$v_5$};
    
    \draw[MyBlue] [-latex] (h1) edge (1);
    \draw[MyBlue] [-latex] (h1) edge (2);
    \draw[MyBlue] [-latex] (h1) edge (3);
    \draw[MyBlue] [-latex] (h1) edge (4);
    \draw[MyBlue] [-latex] (h1) edge (5);
    \draw[MyRed] [-latex] (h2) edge (4);
    \draw[MyRed] [-latex] (h2) edge (5);
\end{tikzpicture}
\caption{Full-ZUTA graph and a sparse subgraph.}
\label{fig:non-generic-points}
\end{center}
\end{figure}
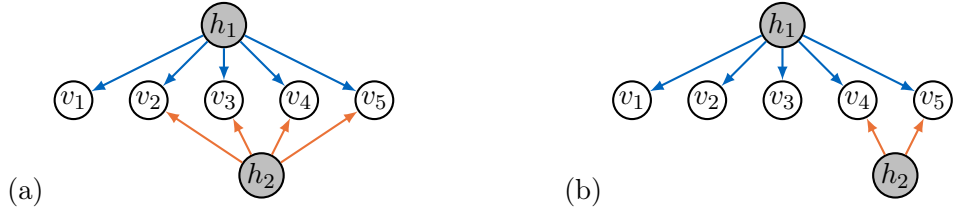

\begin{remark}
    Generic sign-identifiability of full-ZUTA graphs does not imply identifiability of sparse subgraphs, since the models corresponding to subgraphs might be non-generic points in the model given by the full-ZUTA graph. For example, consider the full-ZUTA graph in Figure~\ref{fig:non-generic-points} (a) that is generically sign-identifiable by Theorem~\ref{thm:bekker-berge}. The graph in Figure~\ref{fig:non-generic-points} (b) is a sparse subgraph. Since in this graph $|\ch(h_2)| < 3$, it follows that the model has not expected dimension and is hence not generically sign-identifiable; recall Remark~\ref{rem:dimension}.
\end{remark}

 The following example shows two graphs, that are generically sign-identifiable but no known general criterion is able to certify it.

\begin{example}
The loading matrix for the graph in  Figure~\ref{fig:not-AR-identifiable} has transpose
\[
    \Lambda^{\top} = \begin{pmatrix}
    \lambda_{v_1 h_1} & \lambda_{v_2 h_1} & \lambda_{v_3 h_1} & \lambda_{v_4 h_1} & \lambda_{v_5 h_1} & \lambda_{v_6 h_1} & 0 & 0 & 0 \\
    0 & \lambda_{v_2 h_2} & \lambda_{v_3 h_2} & \lambda_{v_4 h_2} & \lambda_{v_5 h_2} & \lambda_{v_6 h_2} & 0 & 0 & 0 \\
    0 & 0 & \lambda_{v_3 h_3} & \lambda_{v_4 h_3} & \lambda_{v_5 h_3} & 0 & 0 & 0 & 0 \\
    0 & 0 & 0 & \lambda_{v_4 h_4} & 0 & \lambda_{v_6 h_4} & \lambda_{v_7 h_4} & \lambda_{v_8 h_4} & \lambda_{v_9 h_4}
\end{pmatrix}.
\]
The graph is not BB-identifiable as it is not  full-ZUTA. To see that it is not AR-identifiable, delete the row of $\Lambda$ indexed by $v_4$. If we form two $4 \times 4$-matrices out of the remaining $8$ rows, then one of these matrices has to contain at least two rows indexed by $v_7$, $v_8$ or $v_9$. This matrix has at most rank three, which disproves AR-identifiability. Another example that is neither AR- nor BB-identifiable is the graph in Figure~\ref{fig:intro-example}. Using the criteria we develop in the next section, we can certify identifiability of both graphs; also see Example~\ref{ex:M-identifibility}. 
\end{example}

\tikzset{
      every node/.style={circle, inner sep=0.3mm, minimum size=0.5cm, draw, thick, black, fill=white, text=black},
      every path/.style={thick}
}
\begin{figure}[t]
\begin{center}
\begin{tikzpicture}[align=center, scale=0.95]
    \node[fill=lightgray] (h1) at (-1,1) {$h_1$};
    \node[fill=lightgray] (h2) at (0,-1) {$h_2$};
    \node[fill=lightgray] (h3) at (1,1) {$h_3$};
    \node[fill=lightgray] (h4) at (3,1) {$h_4$};
    
    \node[] (1) at (-2.5,0) {$v_1$};
    \node[] (2) at (-1.5,0) {$v_2$};
    \node[] (3) at (-0.5,0) {$v_3$};
    \node[] (4) at (0.5,0) {$v_4$};
    \node[] (5) at (1.5,0) {$v_5$};
    \node[] (6) at (2.5,0) {$v_6$};
    \node[] (7) at (3.5,0) {$v_7$};
    \node[] (8) at (4.5,0) {$v_8$};
    \node[] (9) at (5.5,0) {$v_9$};
    
    \draw[MyBlue] [-latex] (h1) edge (1);
    \draw[MyBlue] [-latex] (h1) edge (2);
    \draw[MyBlue] [-latex] (h1) edge (3);
    \draw[MyBlue] [-latex] (h1) edge (4);
    \draw[MyBlue] [-latex] (h1) edge (5);
    \draw[MyBlue] [-latex] (h1) edge (6);
    \draw[MyRed] [-latex] (h2) edge (2);
    \draw[MyRed] [-latex] (h2) edge (3);
    \draw[MyRed] [-latex] (h2) edge (4);
    \draw[MyRed] [-latex] (h2) edge (5);
    \draw[MyRed] [-latex] (h2) edge (6);
    \draw[MyGreen] [-latex] (h3) edge (3);
    \draw[MyGreen] [-latex] (h3) edge (4);
    \draw[MyGreen] [-latex] (h3) edge (5);
    \draw[MyBrown] [-latex] (h4) edge (4);
    \draw[MyBrown] [-latex] (h4) edge (6);
    \draw[MyBrown] [-latex] (h4) edge (7);
    \draw[MyBrown] [-latex] (h4) edge (8);
    \draw[MyBrown] [-latex] (h4) edge (9);
\end{tikzpicture}
\caption{Sparse factor analysis graphs that is not AR-identifiable nor BB-identifiable.}
\label{fig:not-AR-identifiable}
\end{center}
\end{figure}
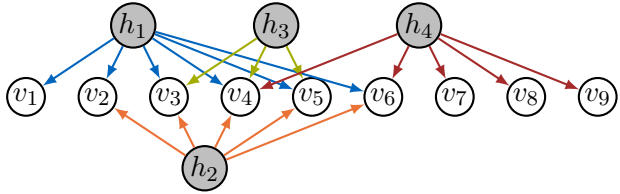

Finally, we note that BB-identifiability subsumes AR-identifiability for full-ZUTA graphs.

\begin{corollary} \label{cor:BB-subsumes-AR}
    Let $G=(V \cup \cH, D)$ be a full-ZUTA graph with $|\cH|\geq 2$ latent nodes that is AR-identifiable. Then, $G$ is also BB-identifiable.
\end{corollary}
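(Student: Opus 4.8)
The plan is to show that the numerical inequality defining AR-identifiability, specialized to a full-ZUTA graph, is strictly stronger than (i.e., implies) the inequality $|V|+|D| < \binom{|V|+1}{2}$ defining BB-identifiability. First I would translate AR-identifiability into a constraint on $p=|V|$ and $m=|\cH|$. Recall from Theorem~\ref{thm:anderson-rubin} that AR-identifiability requires that after deleting any single row of $\Lambda$, the remaining $p-1$ rows can be split into two submatrices each generically of rank $m$. In a full-ZUTA graph the rows (observed nodes) have parent sets that are ``nested'' along the ZUTA-ordering, so a set of $m$ rows is generically of rank $m$ precisely when the corresponding $m \times m$ submatrix of $\Lambda$ is generically nonsingular; one should check (this is the small combinatorial lemma hidden here) that in a full-ZUTA graph \emph{any} $m$ rows form a generically rank-$m$ submatrix, because the support pattern is, up to row permutation, lower-triangular with a nonzero diagonal on the relevant columns. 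Consequently the rank condition reduces to a pure counting condition: after deleting one row we must be able to partition $p-1$ rows into two groups of size $\geq m$ each, which forces $p - 1 \geq 2m$, i.e. $p \geq 2m+1$.

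Next I would plug $p \geq 2m+1$ into the BB inequality. As the excerpt already records, in a full-ZUTA graph one has $|V|+|D| = p(m+1) - \binom{m}{2}$. So BB-identifiability is the statement
\[
    p(m+1) - \binom{m}{2} < \binom{p+1}{2} = \frac{p(p+1)}{2}.
\]
Rearranging, this is equivalent to $2p(m+1) - m(m-1) < p^2 + p$, i.e. to $p^2 + p - 2p(m+1) + m(m-1) > 0$, i.e. $p^2 - (2m+1)p + m(m-1) > 0$. Viewing the left side as a quadratic $q(p)$ in $p$, I would compute its roots: the discriminant is $(2m+1)^2 - 4m(m-1) = 4m^2+4m+1-4m^2+4m = 8m+1$, so the larger root is $\big(2m+1+\sqrt{8m+1}\big)/2$. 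Hence it suffices to show $p \geq 2m+1$ already puts $p$ strictly above this root, i.e. that $2m+1 > \big(2m+1+\sqrt{8m+1}\big)/2$, equivalently $2m+1 > \sqrt{8m+1}$, equivalently (squaring, both sides positive) $4m^2+4m+1 > 8m+1$, i.e. $4m^2 - 4m > 0$, i.e. $m(m-1) > 0$, which holds exactly when $m \geq 2$ — precisely the hypothesis $|\cH|\geq 2$. So $q(2m+1) = (2m+1)^2 - (2m+1)^2 + m(m-1) = m(m-1) > 0$, and since $q$ is increasing for $p \geq 2m+1 > (2m+1)/2$ (the vertex), $q(p) > 0$ for all $p \geq 2m+1$, giving the BB inequality.

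The main obstacle I anticipate is not the arithmetic — that is routine, as sketched — but the first step: carefully justifying that AR-identifiability of a full-ZUTA graph really does force $p \geq 2m+1$. One must argue that the two disjoint rank-$m$ submatrices guaranteed by Theorem~\ref{thm:anderson-rubin} must each use at least $m$ distinct rows (a generically rank-$m$ matrix has at least $m$ rows), that they are row-disjoint, and that together with the deleted row they account for at most $p$ rows, yielding $2m + 1 \leq p$. The only subtlety is that Theorem~\ref{thm:anderson-rubin}'s ``two disjoint submatrices'' could in principle mean column-disjoint rather than row-disjoint; here one uses that each must have all $m$ columns (to have rank $m$ they must be $k \times m$ with $k \geq m$), so disjointness must be in the rows. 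Once that is pinned down, combining with $p \geq 2m+1$ and the quadratic computation above completes the proof. I would also remark that the bound is tight in the sense that $p = 2m+1$ is where AR first succeeds and there $q(p) = m(m-1) > 0$ strictly, so BB succeeds with room to spare; and conversely BB can hold for full-ZUTA graphs (e.g. $p$ slightly below $2m+1$) where AR fails, so the subsumption is strict — but that extra remark is not needed for the corollary itself.
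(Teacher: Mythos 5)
Your proposal is correct and follows essentially the same route as the paper: both reduce AR-identifiability to the counting fact $p \geq 2m+1$, use $|V|+|D| = p(m+1) - \binom{m}{2}$ for full-ZUTA graphs, and invoke $m \geq 2$ to get strict inequality. The only difference is cosmetic — the paper verifies the inequality directly via $p(m+1) \leq \tfrac{1}{2}p(p+1)$ and $\binom{m}{2} \geq 1$ rather than your quadratic-root analysis, and it simply asserts $p \geq 2m+1$ (a fact stated earlier in the text) where you justify it in detail.
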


However, there are full-ZUTA graphs that are BB- but not AR-identifiable. The smallest example has $|V|=8$ observed nodes and $|\cH|=4$ latent nodes.

\section{Main Identifiability Results} \label{sec:identifiability}
In this section, we derive novel graphical criteria that are sufficient for generic sign-identifiability in sparse factor analysis graphs. As we will show, the criteria strictly generalize AR- and BB-identifiability for ZUTA graphs, and are capable of certifying identifiability of models not covered by the AR- nor BB-criterion. 

\subsection{Matching Criterion}

Our first criterion takes the form of a recursive procedure and is based on a graphical extension of the Anderson-Rubin criterion that can be applied locally at a given node.
In the AR criterion, for each observed node $v \in V$, we need to find disjoint sets $U,W \subseteq V\setminus\{v\}$ with $|U|=|W|=|\cH|$ such that $\det(\Lambda_{U,\cH}) \neq 0$ and $\det(\Lambda_{W,\cH}) \neq 0$. This is equivalent to $\det([\Lambda \Lambda^{\top}]_{U,W}) \neq 0$.  Our main idea is to derive, and locally apply, a modified version of the AR criterion that also considers sets $U,W$ with cardinality smaller than $|\cH|$. In doing so, we need to ensure that $\det([\Lambda \Lambda^{\top}]_{U,W}) \neq 0$, i.e., we need to characterize when minors of $\Lambda \Lambda^{\top}$ vanish. This can be achieved via the concept of trek-separation \citep{sullivant2010trek} and leads to the following definition. \looseness=-1

\begin{definition}
Let $G=(V \cup \mathcal{H}, D)$ be a factor analysis graph, 
and let $A,B\subseteq V$ be two subsets of equal cardinality, $|A|=|B|=n$.  A \emph{matching} of $A$ and $B$ is a system $\Pi = \{\pi_1, \ldots, \pi_n\}$ consisting of paths of the form
\[
    \pi_i: v_i \leftarrow h_i \rightarrow w_i, \quad i=1,\dots,n,
\]
where all $h_i \in \mathcal{H}$, and $\{v_1, \ldots, v_n\}=A$ and $\{w_1, \ldots, w_n\}=B$. A matching is \emph{intersection-free} if the $h_i$ are all distinct, and a matching \emph{avoids} $\mathcal{L} \subseteq \mathcal{H}$ if $\mathcal{L} \cap \{h_1, \ldots, h_n\} = \emptyset$. 
\end{definition}

\begin{example}
Consider the sets $A=\{v_2,v_3\}$ and $B=\{v_4,v_5\}$ in the graph from Figure~\ref{fig:non-generic-points} (a). The system
$
    \{v_2 \leftarrow h_1 \rightarrow v_3, v_4 \leftarrow h_2 \rightarrow v_5\}
$
is an intersection-free matching of $A$ and $B$. If instead $A=\{v_1,v_2,v_3\}$ and $B=\{v_1,v_4,v_5\}$, then any matching between $A$ and $B$ has an intersection. An example is given by the set of paths
$
    \{v_1 \leftarrow h_1 \rightarrow v_1, v_2 \leftarrow h_1 \rightarrow v_3, v_4 \leftarrow h_2 \rightarrow v_5\}
$
that intersects in the latent node $h_1$.
\end{example}

Our main tool is a lemma that considers determinants of submatrices of $\Lambda \Lambda^{\top}$ for $\Lambda \in \mathbb{R}^D$. Here, we view the determinant as a polynomial in the indeterminates $\lambda_{vh}$, that is, we view it as an algebraic object without reference to its evaluation at specific values. 

\begin{lemma} \label{lem:determinant}
Let $G=(V \cup \mathcal{H}, D)$ be a factor analysis graph, and let $\Lambda \in \mathbb{R}^D$. For two subsets $A, B \subseteq V$  of equal cardinality, $\det( [\Lambda \Lambda^{\top}]_{A,B} )$ is not the zero polynomial
if and only if there is an intersection-free matching of $A$ and $B$.
\end{lemma}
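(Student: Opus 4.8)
The plan is to analyze the polynomial $\det([\Lambda\Lambda^\top]_{A,B})$ via the Cauchy–Binet formula together with a combinatorial reading of the resulting terms. Write $M = \Lambda\Lambda^\top$, so that $[\Lambda\Lambda^\top]_{A,B} = \Lambda_{A,\cH}\,(\Lambda_{B,\cH})^\top$. Since $|A|=|B|=n$, Cauchy–Binet gives
\[
\det([\Lambda\Lambda^\top]_{A,B}) \;=\; \sum_{\substack{L\subseteq\cH\\ |L|=n}} \det(\Lambda_{A,L})\,\det(\Lambda_{B,L}).
\]
The key observation is that no cancellation occurs between distinct index sets $L$: each product $\det(\Lambda_{A,L})\det(\Lambda_{B,L})$ is a polynomial whose monomials involve exactly the variables $\{\lambda_{vh}: v\in A\cup B,\ h\in L\}$ (each $\lambda_{ah}$ with $a\in A$ appearing with total degree $1$ across the $A$-factor, similarly for $B$), so monomials arising from different $L$ are distinct and cannot cancel across the sum. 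Hence the whole sum is the zero polynomial if and only if every summand is the zero polynomial, i.e. if and only if for every $n$-element $L\subseteq\cH$ at least one of $\det(\Lambda_{A,L})$, $\det(\Lambda_{B,L})$ vanishes identically.

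Next I would translate ``$\det(\Lambda_{A,L})$ is not the zero polynomial'' into a combinatorial condition. Expanding this determinant over permutations (equivalently, over bijections $A\to L$), a term is nonzero as a monomial precisely when it corresponds to a bijection $\phi\colon A\to L$ with $h\rightarrow v\in D$ whenever $\phi(v)=h$ — that is, a perfect matching saturating $A$ in the bipartite graph between $A$ and $L$ induced by $D$. Because the variables $\lambda_{vh}$ are algebraically independent indeterminates, distinct bijections give distinct monomials, so again there is no cancellation: $\det(\Lambda_{A,L})\not\equiv 0$ if and only if such a perfect matching $A\to L$ exists. Combining, $\det([\Lambda\Lambda^\top]_{A,B})\not\equiv 0$ if and only if there exists an $n$-element $L\subseteq\cH$ together with perfect matchings $\phi\colon A\to L$ and $\psi\colon B\to L$. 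Finally I would check that the latter is literally the statement ``there is an intersection-free matching of $A$ and $B$'': given $L$, $\phi$, $\psi$, enumerate $L=\{h_1,\dots,h_n\}$ and set $v_i=\phi^{-1}(h_i)$, $w_i=\psi^{-1}(h_i)$ to get paths $\pi_i\colon v_i\leftarrow h_i\rightarrow w_i$ with all $h_i$ distinct and $\{v_i\}=A$, $\{w_i\}=B$; conversely, an intersection-free matching yields such an $L$ and such bijections directly.

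I expect the main obstacle to be making the ``no cancellation'' arguments fully rigorous, since they are invoked twice (across the Cauchy–Binet index sets, and within each determinant over permutations) and a sloppy version would be circular. The clean way is to pick out a single monomial and track exactly which variables occur in it: for the inner step, each bijection $\phi\colon A\to L$ contributes the squarefree monomial $\prod_{v\in A}\lambda_{v\phi(v)}$, and these are pairwise distinct; for the outer step, the monomials from $L$ record which columns are used, so they determine $L$. One should also be slightly careful that the problem is genuinely about formal polynomials (as the lemma emphasizes), so ``not identically zero'' is equivalent to ``some coefficient is nonzero'', and the coefficient $\pm 1$ (or a sum of $\pm 1$'s indexed by pairs of matchings onto a common $L$ — which could conceivably be zero) needs a remark: in fact within a fixed $L$ the coefficient of $\prod_{v\in A}\lambda_{v\phi(v)}\prod_{w\in B}\lambda_{w\psi(w)}$ is a single signed term $\pm 1$ when $\phi,\psi$ are fixed, so it is genuinely nonzero, and again different $(\phi,\psi,L)$ give different monomials. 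Modulo this bookkeeping, the equivalence follows.
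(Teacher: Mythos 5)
Your argument is correct and takes essentially the same route as the paper's proof, which likewise expands $\det([\Lambda\Lambda^{\top}]_{A,B})$ by Cauchy--Binet, characterizes when each factor $\det(\Lambda_{A,S})$ is the zero polynomial via the existence of a pairing/perfect matching of $A$ and $S$ (the paper cites the Lindstr\"om--Gessel--Viennot lemma, which in this bipartite setting is exactly your Leibniz-expansion no-cancellation argument), and rules out cross-cancellation between different column sets $S$ because the column support of a monomial determines $S$. One minor inaccuracy in your closing remark: when $A\cap B\neq\emptyset$, distinct pairs $(\phi,\psi)$ over the same $L$ can produce the same monomial, so its coefficient need not be a single $\pm 1$; this is harmless, since the step you actually need---that a summand $\det(\Lambda_{A,L})\det(\Lambda_{B,L})$ vanishes identically iff one of its factors does---follows immediately from the polynomial ring being an integral domain.
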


Applying Lemma~\ref{lem:determinant} to Anderson and Rubin's theorem yields the following corollary.

\begin{corollary} \label{cor:ar-equivalence}
    Let $G=(V \cup \mathcal{H}, D)$ be a factor analysis graph that satisfies ZUTA.  Then $G$ is AR-identifiable if and only if for all $v \in V$, there exist two disjoint sets $W,U \subseteq V \setminus \{v\}$ with $|W|=|U|=|\cH|$ such that there is an intersection-free matching between $W$ and $U$. 
\end{corollary}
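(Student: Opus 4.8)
The plan is to combine Theorem~\ref{thm:anderson-rubin} with Lemma~\ref{lem:determinant}, using the key algebraic observation that for disjoint sets $U,W\subseteq V$ of equal cardinality $|\cH|$, the two submatrices $\Lambda_{U,\cH}$ and $\Lambda_{W,\cH}$ are both generically of full rank $|\cH|$ if and only if $\det([\Lambda\Lambda^\top]_{U,W})$ is not the zero polynomial. First I would record this equivalence precisely. Since $U$ and $W$ are disjoint, $[\Lambda\Lambda^\top]_{U,W} = \Lambda_{U,\cH}\Lambda_{W,\cH}^\top$, and by the Cauchy--Binet formula its determinant is $\sum_{S} \det(\Lambda_{U,S})\det(\Lambda_{W,S})$, where $S$ ranges over the $|\cH|$-subsets of $\cH$; but $|\cH|=|S|$ forces $S=\cH$, so in fact $\det([\Lambda\Lambda^\top]_{U,W}) = \det(\Lambda_{U,\cH})\det(\Lambda_{W,\cH})$. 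A product of two polynomials is the zero polynomial iff one of the factors is; hence $\det([\Lambda\Lambda^\top]_{U,W})\neq 0$ as a polynomial iff $\det(\Lambda_{U,\cH})\neq 0$ \emph{and} $\det(\Lambda_{W,\cH})\neq 0$ as polynomials, i.e., iff both submatrices are generically of rank $|\cH|$.

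Next I would translate the statement of Theorem~\ref{thm:anderson-rubin}. AR-identifiability asks that for every deleted row --- i.e., for every $v\in V$ --- there remain two disjoint submatrices of $\Lambda$ (with rows in $V\setminus\{v\}$) that are generically of rank $|\cH|$. A submatrix of $\Lambda$ generically of rank $|\cH|$ must have at least $|\cH|$ rows and, after discarding superfluous rows, we may assume it has exactly $|\cH|$ rows; thus the condition is equivalent to: for every $v\in V$ there exist disjoint $U,W\subseteq V\setminus\{v\}$ with $|U|=|W|=|\cH|$ such that $\Lambda_{U,\cH}$ and $\Lambda_{W,\cH}$ are both generically of rank $|\cH|$. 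By the previous paragraph this is in turn equivalent to $\det([\Lambda\Lambda^\top]_{U,W})$ being a nonzero polynomial, and by Lemma~\ref{lem:determinant} this holds precisely when there is an intersection-free matching between $U$ and $W$. Chaining these equivalences gives exactly the claimed characterization, and the hypothesis that $G$ satisfies ZUTA is carried through unchanged since it is a standing assumption of Theorem~\ref{thm:anderson-rubin}.

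The only real subtlety --- and the step I would be most careful about --- is the reduction ``submatrix generically of rank $|\cH|$'' to ``$|\cH|\times|\cH|$ submatrix with nonvanishing determinant.'' One direction is immediate: an $n\times|\cH|$ submatrix ($n\ge|\cH|$) is generically of rank $|\cH|$ iff some $|\cH|\times|\cH|$ subset of its rows has nonvanishing determinant polynomial. The other direction needs a little care because Theorem~\ref{thm:anderson-rubin} as stated allows the two ``remaining submatrices'' to together use more than $2|\cH|$ rows; one must check that restricting each to a suitable $|\cH|$-row subset preserves both disjointness and generic full rank, which is clear once one picks, inside each submatrix, the $|\cH|$ rows witnessing the rank. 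I would also note in passing that the phrasing ``for any deleted row of $\Lambda$'' matches ``for all $v\in V$'' exactly, so no reindexing issue arises. With these bookkeeping points settled, the proof is a short chain of ``iff'' steps and requires no new ideas beyond Lemma~\ref{lem:determinant} and Cauchy--Binet.
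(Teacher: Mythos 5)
Your proof is correct and follows essentially the same route as the paper: translating AR-identifiability into the existence, for each $v$, of disjoint size-$|\cH|$ sets $W,U$ with both $\det(\Lambda_{W,\cH})$ and $\det(\Lambda_{U,\cH})$ nonzero polynomials, identifying this with $\det([\Lambda\Lambda^{\top}]_{W,U})\neq 0$ (your explicit Cauchy--Binet/product-of-determinants step is exactly the reason the paper's asserted equivalence holds), and then invoking Lemma~\ref{lem:determinant}. The extra bookkeeping you note about trimming larger full-rank submatrices down to $|\cH|$ rows is a harmless refinement the paper passes over silently.
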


\begin{example}
We saw in Example~\ref{ex:AR-identifiable} that the graph in Figure~\ref{fig:AR-example} is AR-identifiable. Corollary~\ref{cor:ar-equivalence} allows us to certify AR-identifiability in a fully graphical way without relating to the factor loading matrix. For example, for node $v_5$, we observe that the two sets $U=\{v_1,v_2\}$ and $W=\{v_3,v_4\}$ have intersection-free matching
$
    \{v_1 \leftarrow h_1 \rightarrow v_2, v_3 \leftarrow h_2 \rightarrow v_4\}.
$
\end{example}

\begin{remark} \label{rem:comparison-matchings}
    The use of matchings to verify AR-identifiability also appears in recent work of \citet[Proposition 2]{hosszejni2026cover} who make a connection between computing {classical} maximal matchings in bipartite graphs and verifying AR-identifiability. They consider matchings that are defined on \emph{duplicate bipartite} graphs, which are constructed by first duplicating all latent nodes of the original graph and then duplicating the edges connecting these new latent nodes to the original observed nodes. The criterion of \citet{hosszejni2026cover} then establishes AR-identifiability by checking whether the duplicate bipartite graph admits a maximal matching that covers all latent nodes, both the original and their duplicates. However, this approach is not feasible when we modify Corollary~\ref{cor:ar-equivalence} to be locally applicable, as we do next. The reason is that if not all latent nodes are part of the matching, we do not know a priori which nodes we should consider in the bipartite graph. Therefore, we consider intersection-free matchings defined with respect to the original factor analysis graph. 
\end{remark}

We are now ready to define our new matching criterion, which 
operates ``node-wise'' and considers generic sign-identifiability for individual latent nodes $h \in \cH$.

\begin{definition} \label{def:matching-criterion}
    Fix a latent node $h \in \cH$ in factor analysis graph $G=(V \cup \mathcal{H}, D)$. A tuple $(v, W, U, S) \in V \times 2^{V} \times 2^{V} \times 2^{ \cH \setminus \{h\}}$ satisfies the \emph{matching criterion} with respect to $h$ if
    \begin{itemize}
        \item[(i)] $\pa(v)\setminus S = \{h\}$ and $v \not\in W \cup U$,
        \item[(ii)] $W$ and $U$ are disjoint, nonempty sets of equal cardinality, 
        \item[(iii)] there exists an intersection-free matching of $W$ and $U$ that avoids $S$,
        \item[(iv)] there is no intersection-free matching of $\{v\} \cup W$ and $\{v\} \cup U$ that avoids $S$.
    \end{itemize}
\end{definition}

If $(v, W, U, S)$ satisfies the matching criterion with respect to $h$, then Condition (iii) ensures $\det([\Lambda \Lambda^{\top}]_{W,U}) \neq 0$, and Condition (iv) ensures  $\det([\Lambda \Lambda^{\top}]_{\{v\} \cup W,\{v\} \cup U}) = 0$ after removing the nodes in $S$ from the graph. The Laplace expansion of determinants then allows us to find a rational formula for $\lambda_{vh}^2$ in terms of the entries of the covariance matrix. We can thus identify $\lambda_{vh}$ up to sign. Having identified  parameter $\lambda_{v h}$ for one child $v \in \ch(h)$, it is easy to certify sign-identifiability of $h$, i.e., to identify the remaining parameters $\lambda_{uh}$ for  $u \in \ch(h) \setminus \{v\}$ up to the same sign. This is formalized in our first main result.

\begin{theorem}[M-identifiability] \label{thm:identifiability}
Let $G=(V \cup \mathcal{H}, D)$ be a factor analysis graph, and fix a latent node $h \in \cH$.  Suppose that the tuple $(v, W, U, S) \in V \times 2^{V} \times 2^{V} \times 2^{ \cH \setminus \{h\}}$ satisfies the matching criterion with respect to $h$. If all nodes $\ell \in S$ are generically sign-identifiable, then $h$ is generically sign-identifiable.
\end{theorem}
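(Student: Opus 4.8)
The plan is to turn the combinatorial conditions (iii) and (iv) into algebraic identities about the entries of $\Sigma - \Omega = \Lambda\Lambda^\top$ and thereby produce an explicit rational identification formula for $\lambda_{vh}^2$. First I would reduce to the situation where the nodes in $S$ are already known: since all $\ell \in S$ are generically sign-identifiable by hypothesis, for almost all $(\Omega,\Lambda)$ every $(\widetilde\Omega,\widetilde\Lambda)$ in the fiber satisfies $\widetilde\Lambda_{\ch(\ell),\ell} = \Lambda_{\ch(\ell),\ell}$ for each $\ell \in S$. Hence the contributions of the latent nodes in $S$ to any entry $[\Lambda\Lambda^\top]_{a,b} = \sum_{k \in \cH} \lambda_{ak}\lambda_{bk}$ are the same for $\Lambda$ and $\widetilde\Lambda$, so the matrix $M := \Lambda\Lambda^\top - \sum_{\ell \in S}\Lambda_{\star,\ell}\Lambda_{\star,\ell}^\top$ is a known function of $\Sigma$ (namely $\Sigma - \Omega$ minus known rank-one pieces), and $M = \Lambda_{\star, \cH\setminus S}\Lambda_{\star,\cH\setminus S}^\top + (\Omega - \widetilde\Omega\text{-type terms})$; more precisely $M_{a,b} = \sum_{k \notin S}\lambda_{ak}\lambda_{bk}$ whenever $a\neq b$, and this is exactly the off-diagonal part of $\Lambda'\Lambda'^\top$ where $\Lambda'$ is $\Lambda$ with the $S$-columns deleted. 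I would phrase everything in terms of $\Lambda'$: by Lemma~\ref{lem:determinant} applied to the graph $G' = G \setminus S$, condition (iii) gives $\det(M_{W,U}) = \det([\Lambda'\Lambda'^\top]_{W,U}) \neq 0$ as a polynomial, hence nonzero for almost all parameters, and condition (iv) gives $\det(M_{\{v\}\cup W,\{v\}\cup U}) = \det([\Lambda'\Lambda'^\top]_{\{v\}\cup W,\{v\}\cup U}) = 0$ identically.

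Next I would extract $\lambda_{vh}^2$ by Laplace/Schur expansion. Write the $(|W|+1)\times(|U|+1)$ matrix $N := M_{\{v\}\cup W, \{v\}\cup U}$ with the $v$-row and $v$-column singled out. Since $\det(N) = 0$ while its $(W,U)$-block $M_{W,U}$ is invertible, the Schur complement vanishes:
\[
    M_{v,v} - M_{v,U}\,(M_{W,U})^{-1}\,M_{W,v} \;=\; \frac{\det(N)}{\det(M_{W,U})} \;=\; 0,
\]
where $M_{v,v}$ here denotes the $(v,v)$-entry of $M$, which equals $\sum_{k\notin S}\lambda_{vk}^2$. By condition (i), $\pa(v)\setminus S = \{h\}$, so in $\Lambda'$ the only nonzero entry in row $v$ is $\lambda_{vh}$, giving $M_{v,v} = \lambda_{vh}^2$ and, moreover, $M_{v,u} = \lambda_{vh}\lambda_{uh}$ for $u \in U$ and $M_{w,v} = \lambda_{wh}\lambda_{vh}$ for $w \in W$. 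Therefore the Schur-complement identity reads
\[
    \lambda_{vh}^2 \;=\; M_{v,U}\,(M_{W,U})^{-1}\,M_{W,v},
\]
and the right-hand side is a rational function of the entries of $M$, hence (after substituting the known $S$-columns) a rational function of $\Sigma$, valid wherever $\det(M_{W,U})\neq 0$, i.e.\ generically. This recovers $|\lambda_{vh}|$ generically; note genericity also ensures $\lambda_{vh}\neq 0$ so the sign is meaningful. Then, exactly as in the last step of Example~\ref{ex:harman}, for any other child $u \in \ch(h)\setminus\{v\}$ one has $M_{v,u} = \lambda_{vh}\lambda_{uh}$ (using $\pa(v)\setminus S = \{h\}$ again), so $\operatorname{sign}(\lambda_{vh})\lambda_{uh} = M_{v,u}/|\lambda_{vh}|$ is identified; hence the whole column $\Lambda_{\ch(h),h}$ is determined up to the single global sign $\operatorname{sign}(\lambda_{vh})$, which is precisely generic sign-identifiability of $h$ in the sense of Definition~\ref{def:identifiability}.

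Two technical points need care. First, the passage from "$\det(M_{W,U})$ is a nonzero polynomial" to "nonzero for almost all $(\Omega,\Lambda)$" requires that the relevant polynomials are not accidentally killed by the substitution of the known $S$-columns; but since for the true parameters $\widetilde\Lambda = \Lambda$ the matrix $M$ genuinely equals the off-diagonal part of $\Lambda'\Lambda'^\top$, and the vanishing/non-vanishing in Lemma~\ref{lem:determinant} is about these very polynomials in the $\lambda_{vh}$, the generic set is the complement of a proper algebraic subset of $\Theta_G$ and has full measure; I would also intersect with the (full-measure) generic sets coming from sign-identifiability of each $\ell \in S$ and from $\lambda_{vh}\neq 0$. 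Second, I should confirm that $M_{a,b}$ for $a \neq b$ really is accessible from $\Sigma$ and the identified $S$-columns: $[\Sigma]_{a,b} = \sum_{k}\lambda_{ak}\lambda_{bk}$ for $a\neq b$ (the $\Omega$ term only affects the diagonal), and subtracting $\sum_{\ell\in S}\widetilde\lambda_{a\ell}\widetilde\lambda_{b\ell} = \sum_{\ell\in S}\lambda_{a\ell}\lambda_{b\ell}$ leaves $\sum_{k\notin S}\lambda_{ak}\lambda_{bk} = M_{a,b}$; crucially all entries of $M$ used in the Schur complement above — namely $M_{W,U}$, $M_{v,U}$, $M_{W,v}$, and $M_{v,v}$ — are off-diagonal (because $v \notin W\cup U$ by (i) and $W\cap U = \emptyset$ by (ii)), with the sole exception of $M_{v,v}$, which does not need to be "read off" since it is the quantity we are solving for. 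I expect the main obstacle to be bookkeeping the genericity argument cleanly — in particular making rigorous that the identification formula, which a priori holds only on the true fiber, forces every fiber element $\widetilde\Lambda$ to have the asserted $h$-column — rather than any deep difficulty; this is handled by observing that the formula expresses $\widetilde\lambda_{vh}^2$ (and then the $h$-column of $\widetilde\Lambda$ up to sign) purely in terms of $\tau_G(\widetilde\Omega,\widetilde\Lambda) = \Sigma$ and the $S$-columns, which are common to all fiber elements.
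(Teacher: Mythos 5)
Your proposal is correct and follows essentially the same route as the paper's proof: subtract the identified $S$-column contributions to form the adjusted matrix (the paper's $\widehat{\Sigma}$, your $M$), invoke Lemma~\ref{lem:determinant} on the graph with $S$ removed so that condition (iii) gives a nonvanishing $(W,U)$ minor and condition (iv) makes the augmented minor the zero polynomial, solve for $\lambda_{vh}^2$, and transfer the same rational formula to every fiber element before propagating the sign to the rest of the column. The only cosmetic difference is that you phrase the extraction of $\lambda_{vh}^2$ via a Schur complement while the paper uses Laplace expansion along the $v$-row; these are equivalent.
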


Theorem~\ref{thm:identifiability} provides a way to recursively certify generic sign-identifiability of a factor analysis graph by checking whether all nodes $h \in \cH$ are generically sign-identifiable; recall Lemma~\ref{lem:nodes-vs-graph-id}. If generic sign-identifiability can be certified recursively by Theorem~\ref{thm:identifiability}, then we call the factor analysis graph \emph{M-identifiable}. The details of an efficient algorithm to check M-identifiability using max-flow techniques are described in Appendix~\ref{sec:algos}.

\tikzset{
      every node/.style={circle, inner sep=0.3mm, minimum size=0.5cm, draw, thick, black, fill=white, text=black},
      every path/.style={thick}
}
\begin{figure}[t]
\centering
{
\begin{tikzpicture}[align=center]
    \node[fill=lightgray] (h1) at (-1,1) {$h_1$};
    \node[fill=lightgray] (h2) at (0.5,-1) {$h_2$};
    \node[fill=lightgray] (h3) at (0.5,1) {$h_3$};
    
    \node[] (1) at (-2.5,0) {$v_1$};
    \node[] (2) at (-1.5,0) {$v_2$};
    \node[] (3) at (-0.5,0) {$v_3$};
    \node[] (4) at (0.5,0) {$v_4$};
    \node[] (5) at (1.5,0) {$v_5$};
    \node[] (6) at (2.5,0) {$v_6$};
    
    \draw[MyBlue] [-latex] (h1) edge (1);
    \draw[MyBlue] [-latex] (h1) edge (2);
    \draw[MyBlue] [-latex] (h1) edge (3);
    \draw[MyBlue] [-latex] (h1) edge (4);
    \draw[MyBlue] [-latex] (h1) edge (5);
    \draw[MyBlue] [-latex] (h1) edge (6);
    \draw[MyRed] [-latex] (h2) edge (2);
    \draw[MyRed] [-latex] (h2) edge (3);
    \draw[MyRed] [-latex] (h2) edge (4);
    \draw[MyRed] [-latex] (h2) edge (5);
    \draw[MyRed] [-latex] (h2) edge (6);
    \draw[MyGreen] [-latex] (h3) edge (3);
    \draw[MyGreen] [-latex] (h3) edge (4);
    \draw[MyGreen] [-latex] (h3) edge (5);
\end{tikzpicture}
}
\caption{M-identifiable sparse factor analysis graph.}
\label{fig:M-example}
\end{figure}
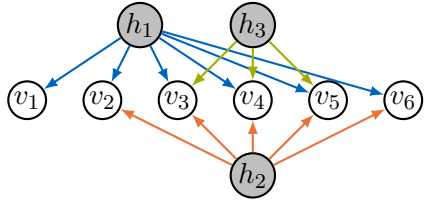

\begin{example} \label{ex:M-identifibility}
The factor analysis graph in Figure~\ref{fig:M-example} is not AR-identifiable since $|V|=2|\cH|$. However, it is M-identifiable. We recursively check all latent nodes $\cH = \{ h_1, h_2, h_3 \}$.
\begin{itemize}
\item[\underline{$h_1$:}] Take $v=v_1$, $S=\emptyset$, $U=\{v_2,v_6\}$, $W=\{v_3,v_4\}$. Conditions (i) and (ii) are easily checked, and for (iii) an intersection-free matching is given by $\{v_2 \leftarrow h_1 \rightarrow v_3, v_6 \leftarrow h_2 \rightarrow v_4\}$. To verify (iv), note that $\pa(\{v\} \cup U) \cap \pa(\{v\} \cup W) = \{h_1, h_2\}$, which implies that there cannot exist an intersection-free matching of $\{v\} \cup U$ and $\{v\} \cup W$.
\item[\underline{$h_2$:}] Take $v=v_2$, $S=\{h_1\}$, $U=\{v_3\}$, $W=\{v_6\}$. The matching $\{v_3 \leftarrow h_2 \rightarrow v_6\}$ is intersection-free, and $(\pa(\{v\} \cup U) \cap \pa(\{v\} \cup W)) \setminus S = \{h_2\}$ implies that (iv) holds.
\item[\underline{$h_3$:}] Take $v=v_3$, $S=\{h_1,h_2\}$, $U=\{v_4\}$, $W=\{v_5\}$. The matching $\{v_4 \leftarrow h_3 \rightarrow v_5\}$ is intersection-free, and $(\pa(\{v\} \cup U) \cap \pa(\{v\} \cup W)) \setminus S = \{h_3\}$ implies that (iv) holds.
\end{itemize}
Note that the graphs in Figure~\ref{fig:intro-example} and \ref{fig:not-AR-identifiable} are also M-identifiable, which can be seen similarly.
\end{example}

Next, we show that M-identifiability subsumes AR-identifiability.

\begin{corollary} \label{cor:subsumes-AR}
    Let $G=(V \cup \cH, D)$ be a factor analysis graph that satisfies ZUTA. Then:
    \begin{itemize}
        \item[(i)] If $G$ is AR-identifiable, then it is also M-identifiable.
        \item[(ii)] If $G$ is full-ZUTA, then $G$ is AR-identifiable if and only if it is M-identifiable.
    \end{itemize}
\end{corollary}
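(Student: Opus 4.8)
The plan is to prove the two parts of Corollary~\ref{cor:subsumes-AR} essentially independently, leaning on Corollary~\ref{cor:ar-equivalence} to restate AR-identifiability in matching language and on Lemma~\ref{lem:nodes-vs-graph-id} / Theorem~\ref{thm:identifiability} to assemble M-identifiability node by node.

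For part (i), suppose $G$ satisfies ZUTA and is AR-identifiable. By Corollary~\ref{cor:ar-equivalence}, for every $v \in V$ there are disjoint $W, U \subseteq V \setminus \{v\}$ with $|W| = |U| = |\cH|$ admitting an intersection-free matching. I want to produce, for each latent node $h$, a tuple $(v, W', U', S)$ meeting the matching criterion of Definition~\ref{def:matching-criterion}. First I would invoke the ZUTA-ordering $\prec$ and process the latent nodes in reverse order $h_m \succ \cdots \succ h_1$, setting $S = \{\ell : \ell \succ h\}$ at the step for $h$; the recursive structure of Theorem~\ref{thm:identifiability} then only requires that nodes in $S$ have already been certified, which holds by the ordering. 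By ZUTA there is a child $v = v_h \in \ch(h)$ with $\pa(v) \setminus S = \{h\}$, giving Condition (i) once I also guarantee $v \notin W' \cup U'$. For Conditions (ii)--(iii) I take the AR sets $W, U$ for this $v$ and shrink them if needed so the matching avoids $S$: the intersection-free AR matching uses $|\cH|$ distinct latent nodes, so at most $|S|$ of its paths touch $S$; deleting those paths (and the corresponding endpoints from $W$ and $U$) yields nonempty equal-cardinality sets $W', U'$ with an $S$-avoiding intersection-free matching, provided at least one path survives — which I would argue from $|W| = |\cH| \ge |S| + 1$, using that $h \notin S$ so not all $|\cH|$ latent nodes lie in $S$. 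The remaining point is Condition (iv): there must be no intersection-free matching of $\{v\} \cup W'$ and $\{v\} \cup U'$ avoiding $S$. This is where I expect the real work. The cleanest route: in the subgraph with $S$ deleted, $\pa(v) = \{h\}$, so any matching of $\{v\} \cup W'$ and $\{v\} \cup U'$ must route the copies of $v$ through $h$; for the matching to be intersection-free and cover the extra row and column $v$, one needs a path $v \leftarrow h \rightarrow v$ together with an $S$-avoiding intersection-free matching of $W'$ and $U'$ that also avoids $h$ — equivalently, of $W'$ and $U'$ after deleting $h$ as well. Thus Condition (iv) fails only if such an $(S \cup \{h\})$-avoiding matching of $W', U'$ exists. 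If it does, I would simply enlarge $S$ to $S \cup \{h\}$ — no wait, that is circular since we are identifying $h$. Instead I would shrink once more: drop the surviving path that passes nearest to $h$, or reselect $W', U'$ as a single pair $\{w\}, \{u\}$ of children of $h$ lying below $v$ in a common path $w \leftarrow h \rightarrow u$, which always exists when $|\ch(h) \setminus (S\text{-reachable set})| \ge 3$ — and here ZUTA plus AR-identifiability (hence $|V| \ge 2|\cH|+1$) should supply enough children of $h$. With $W' = \{w\}$, $U' = \{u\}$ both children of $h$ only, Condition (iv) becomes: no intersection-free $S$-avoiding matching of $\{v, w\}$ and $\{v, u\}$; since all three of $v, w, u$ have $h$ as their only non-$S$ parent, any two of the three paths would have to share $h$, so no intersection-free matching of size $2$ exists and (iv) holds automatically. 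This reduction to the "all-children-of-$h$" triple is, I think, the crux, and it is exactly the mechanism illustrated in Example~\ref{ex:M-identifibility}.

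For part (ii), the forward direction is immediate from (i). For the converse, assume $G$ is full-ZUTA and M-identifiable; I must show it is AR-identifiable, i.e., that $|V| \ge 2|\cH| + 1$ in the full-ZUTA combinatorics (since a full-ZUTA graph is AR-identifiable exactly when, deleting any row, the remaining $|V| - 1 \ge 2|\cH|$ rows split into two generically full-rank $|\cH| \times |\cH|$ blocks, and for full-ZUTA any $|\cH|$ rows among the first $|\cH|$ "dense prefix" rows are generically full rank, so the condition is purely $|V| - 1 \ge 2|\cH|$). So it suffices to show M-identifiability of a full-ZUTA graph forces $|V| \ge 2|\cH| + 1$. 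I would argue by examining the first M-identification step. The matching criterion at any node $h$ needs a node $v$ with $\pa(v) \setminus S = \{h\}$ for some $S \subseteq \cH \setminus \{h\}$ of already-identified nodes; at the very first step $S = \emptyset$, so we need $v$ with $\pa(v) = \{h\}$, i.e., a node with a unique parent. In a full-ZUTA graph with ordering $h_1 \prec \cdots \prec h_m$ and $\ch(h_{i+1}) = \ch(h_i) \setminus \{v_i\}$, the only node with a single parent is $v_m$ (the last child removed), whose unique parent is $h_m$. Thus the first step must identify $h_m$ via $v = v_m$, and it needs disjoint nonempty $W, U \subseteq V \setminus \{v_m\}$ of equal size with an intersection-free matching (Condition (iii)) but with Condition (iv) blocking any intersection-free matching of $\{v_m\} \cup W$, $\{v_m\} \cup U$. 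Since $\pa(v_m) = \{h_m\}$ and $h_m$ is a child-minimal node, $v_m$ can only be matched through $h_m$, which then cannot be reused; so (iv) says there is no intersection-free matching of $W$ and $U$ avoiding $h_m$. But in a full-ZUTA graph, $\ch(h_m)$ may be small ($|\ch(h_m)| = |V| - m + 1$), and every observed node other than the last few has $h_1$ as a parent. The key observation is that if $|V| \ge 2|\cH| + 1$ were to fail, i.e., $|V| \le 2|\cH|$, then after removing $v_m$ we have at most $2|\cH| - 1$ observed nodes, and I would show that any two disjoint equal-size sets $W, U$ with an intersection-free matching avoiding $h_m$ already admit an intersection-free matching of $\{v_m\} \cup W$ and $\{v_m\} \cup U$ (route $v_m \leftarrow h_m \rightarrow v_m$ or borrow a child of $h_m$ into the matching), contradicting (iv). Conversely if $|V| \ge 2|\cH|+1$, AR-identifiability holds outright. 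Making the "borrow a child of $h_m$" step airtight — ensuring $h_m$ always has a free child available to complete the $v_m$-extended matching whenever $|V| \le 2|\cH|$ — is the delicate part; I would handle it by a counting argument on $|\ch(h_m)| = |V| - m + 1 \ge 3$ (guaranteed by Remark~\ref{rem:dimension}) together with the nestedness of the $\ch(h_i)$.

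I expect the main obstacle throughout to be Condition (iv) of the matching criterion — the \emph{non}-existence of an extended matching — since positive matching statements follow from exhibiting explicit path systems, whereas (iv) requires a structural argument that $h$ (after deleting $S$) is a cut vertex for $v$. In both parts the resolution is the same idea: reduce to a configuration where $v$ and the chosen elements of $W, U$ all have $h$ as their sole surviving parent, so that an intersection-free matching of size $|W| + 1$ is impossible for purely pigeonhole reasons. The ZUTA / full-ZUTA structure is what makes such a reduction available, and verifying that the reduction never runs out of children of $h$ is the one place a genuine counting lemma is needed.
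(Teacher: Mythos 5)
Your overall plan for part (i) is the paper's plan (take the ZUTA-private child $v$ of $h$, take the AR sets of size $|\cH|$, delete the paths whose middle latent lies in $S$), but two things go wrong. First, the recursion direction is reversed: with the paper's convention, the ZUTA-private child of $h$ satisfies $\pa(v)\subseteq\{\ell:\ell\preceq h\}$, so you must process nodes in \emph{increasing} $\prec$-order with $S=\{\ell:\ell\prec h\}$; with your choice $S=\{\ell:\ell\succ h\}$ the required $v$ with $\pa(v)\setminus S=\{h\}$ need not exist (at your first step you need $\pa(v)=\{h_m\}$, and e.g.\ in a full-ZUTA graph the only single-parent node belongs to the $\prec$-smallest latent node). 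Second, and more seriously, you never establish Condition (iv). For the shrunk sets it holds automatically by a pigeonhole count, which is exactly the paper's argument: the original matching is intersection-free of size $|\cH|$, so \emph{all} latent nodes appear in it and exactly $|S|$ paths are deleted; if an $S$-avoiding intersection-free matching of $\{v\}\cup W'$ and $\{v\}\cup U'$ existed, re-adding the deleted paths (whose middle latents lie in $S$, hence are distinct from those used) would give an intersection-free matching of $\{v\}\cup W$ and $\{v\}\cup U$ of size $|\cH|+1$, impossible since only $|\cH|$ latent nodes exist. Instead of this count you treat failure of (iv) as a live possibility and propose to repair it by reselecting $W'=\{w\}$, $U'=\{u\}$ with $w,u$ children of $h$ whose only non-$S$ parent is $h$. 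Such a pair need not exist: at the very first step of a full-ZUTA graph ($S=\emptyset$, $h=h_1$) every pair of children of $h_1$ other than its private child also shares the parent $h_2$, so your fallback matching of $\{v,w\}$ and $\{v,u\}$ can be completed through $h_2$ and (iv) fails for that choice. So the proof of (iv) is missing.

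For part (ii), your reduction of the converse to ``a full-ZUTA graph is AR-identifiable iff $|V|\ge 2|\cH|+1$'' is a correct and slightly different framing (the paper instead shows directly that the first M-step tuple must have $|W_1|=|U_1|=|\cH|$ and then exhibits AR matchings for every $v$ by swapping $v_1$ into $U_1$), although your justification of the rank claim is off: in a full-ZUTA graph the dense rows are the \emph{last} ones, and the relevant fact is that the staircase support pattern makes \emph{any} $|\cH|$ rows generically nonsingular. The real gap is the step you yourself flag as delicate: showing that whenever $|W|=|U|<|\cH|$, an intersection-free matching of $W$ and $U$ in a full-ZUTA graph can always be rerouted and extended to an intersection-free matching of $\{v\}\cup W$ and $\{v\}\cup U$, contradicting (iv). This is where the paper does concrete work (relabel the pairs so that $h_{k+1}\in\jpa(\{w_k,u_k\})$ and write down the explicit path system), and your sketch of it is built on a backwards description of the full-ZUTA structure: the unique single-parent node is the first removed child $v_1$, whose parent is $h_1$ with $\ch(h_1)=V$, not $v_m$ with the small child set $\ch(h_m)$, so the counting argument on $|\ch(h_m)|=|V|-m+1$ does not line up with the configuration you actually face. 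As written, both parts therefore have genuine gaps, even though the high-level strategy (and the final pigeonhole intuition you state in the last paragraph) is the right one.
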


Even though M-identifiability subsumes AR-identifiability, it can also only establish identifiability of graphs that satisfy ZUTA.

\begin{corollary} \label{cor:zuta-satisfied}
Let $G=(V \cup \cH, D)$ be a factor analysis graph that is M-identifiable. Then the factor analysis graph $G$ satisfies ZUTA.
\end{corollary}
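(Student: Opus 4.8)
The plan is to argue by contradiction on the recursion that witnesses M-identifiability. Suppose $G$ is M-identifiable but does not satisfy ZUTA. Since $G$ is M-identifiable, Lemma~\ref{lem:nodes-vs-graph-id} together with the recursive structure of Theorem~\ref{thm:identifiability} gives an ordering of the latent nodes, say $h_1, \dots, h_m$, such that each $h_k$ is certified generically sign-identifiable using a tuple $(v_k, W_k, U_k, S_k)$ satisfying the matching criterion with respect to $h_k$, where $S_k \subseteq \{h_1, \dots, h_{k-1}\}$ (the set $S_k$ consists of previously certified nodes, possibly after also removing trivially identifiable nodes with no children, which we may place first in the ordering). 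The claim I want to establish is that this very ordering, read in reverse, is a ZUTA-ordering for $G$: concretely, that for each $k$ the set $\ch(h_k)$ is not contained in $\bigcup_{j < k} \ch(h_j)$, which is exactly the ZUTA condition $\ch(h_k) \not\subseteq \bigcup_{\ell \succ h_k}\ch(\ell)$ under the order $\succ$ that reverses $h_1, \dots, h_m$.

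The key step is to extract, from Condition~(i) of Definition~\ref{def:matching-criterion}, an observed node that pins down the ordering. Condition~(i) says $\pa(v_k)\setminus S_k = \{h\}$; in particular $h_k \in \pa(v_k)$, and every other parent of $v_k$ lies in $S_k \subseteq \{h_1,\dots,h_{k-1}\}$. Hence the observed node $v_k \in \ch(h_k)$ satisfies $v_k \notin \ch(h_j)$ for every $j \geq k$ with $j \neq k$ — indeed, if $v_k \in \ch(h_j)$ then $h_j \in \pa(v_k)$, forcing $h_j \in S_k$ and thus $j \le k-1$. Consequently $v_k \in \ch(h_k) \setminus \bigcup_{j>k}\ch(h_j)$, so $\ch(h_k)$ is not contained in $\bigcup_{j>k}\ch(h_j)$. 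Writing $\prec$ for the order with $h_1 \prec h_2 \prec \cdots \prec h_m$, this is precisely the statement that $\ch(h_k) \not\subseteq \bigcup_{\ell \succ h_k}\ch(\ell)$ fails to hold — wait, we need the opposite inclusion direction, so instead take $\succ$ to be this order: then $\bigcup_{\ell \succ h_k}\ch(\ell) = \bigcup_{j>k}\ch(h_j)$, and we have shown $\ch(h_k) \not\subseteq \bigcup_{\ell \succ h_k}\ch(\ell)$, which is the ZUTA condition. Since this holds for all $k$, the ordering is a ZUTA-ordering and $G$ satisfies ZUTA, contradicting our assumption.

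The main obstacle is making the ``recursive structure'' precise, i.e., justifying the existence of a global ordering of $\cH$ compatible with all the $S_k$. The subtlety is that Theorem~\ref{thm:identifiability} certifies $h$ assuming the nodes in $S$ are \emph{already} generically sign-identifiable, and one must argue this dependency is acyclic so that a topological order exists; nodes with $\ch(h)=\emptyset$ are identifiable for free and can be prepended, while each genuine certification of $h_k$ uses only nodes from strictly earlier in the recursion. One should also handle the boundary case where some $h_k$ is identified trivially (empty children): such nodes contribute nothing to any $\ch(\cdot)$ and the ZUTA condition for them is vacuous, so they cause no trouble. Once the acyclicity of the certification dependency is spelled out, the argument above applies verbatim.
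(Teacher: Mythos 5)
Your argument is essentially the paper's own proof: you take the ordering of $\cH$ induced by the recursive certification (so that each $S_k$ consists of earlier nodes), and use Condition (i) of the matching criterion, $\pa(v_k)\setminus S_k=\{h_k\}$, to exhibit a child $v_k$ of $h_k$ that is not a child of any later node, which is exactly the ZUTA condition for that ordering. The contradiction framing and the aside about childless nodes are unnecessary, but the core reasoning matches the paper's proof of Corollary~\ref{cor:zuta-satisfied}.
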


\subsection{Extension of the Matching-Criterion}

By Corollary~\ref{cor:subsumes-AR}, M-identifiability subsumes AR-identifiability. However, it does not subsume BB-identifiability. For example, the full-ZUTA graph on $|V|=8$ observed nodes and $|\cH|=4$ latent nodes is BB- but not M-identifiable. We now provide a second criterion that can certify generic sign-identifiability of a set of latent nodes in a way that generalizes BB-identifiability. It operates by searching locally for full-ZUTA subgraphs $\widetilde{G}=(\widetilde{V}, \widetilde{D})$ that satisfy the condition $|\widetilde{V}|+ |\widetilde{D}| < \binom{|\widetilde{V}|+1}{2}$. Combining both criteria then yields an extension of the matching criterion. We start by defining the necessary concepts. 

\begin{definition}
For a set  $B \subseteq V$ of observed nodes, the set of \textit{joint parents} of pairs in $B$ is given by
\[
    \jpa(B) = \{h \in \pa(u) \cap \pa(v): u,v \in B, u \neq v\}.
\]
Moreover, for another set $S \subseteq V$, we say that an ordering $\prec$ on the set $S$ is a \emph{$B$-first-ordering} if, for two elements $v,w \in S$, it holds that $v \prec w$ whenever $v \in B \cap S$ and $w \in S \setminus B$.
\end{definition}

Said differently, a $B$-first-ordering on a set of nodes $S$ is a block-ordering such that all elements in $B$ come first. 

\begin{example}
Consider the graph in Figure~\ref{fig:M-example}, and let $B=\{v_1,v_2,v_3\}$. The joint parents are given by $\jpa(B)=\{h_1,h_2\}$. Moreover, for $S = \{v_1,v_2,v_4,v_5\}$, an example of a $B$-first ordering is given by $v_2 \prec v_1 \prec v_5 \prec v_4$.
\end{example}

We now define a criterion that generalizes BB-identifiability. For $A \subseteq V \cup \cH$, we write $G[A]=(A,D_A)$ for the \emph{induced} subgraph of $G=(V \cup \cH, D)$.
The edge set $D_A=\{h \rightarrow v \in D: h, v \in A\}$ includes precisely those edges in $D$ that have both endpoints in $A$.

\tikzset{
      every node/.style={circle, inner sep=0.3mm, minimum size=0.5cm, draw, thick, black, fill=white, text=black},
      every path/.style={thick}
}
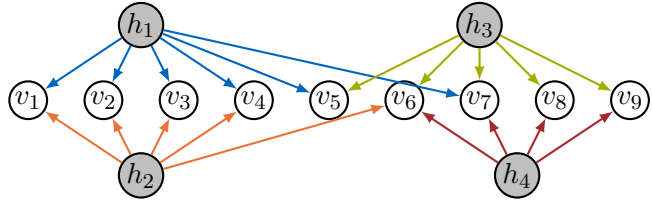
\begin{figure}[t]
\begin{center}
\begin{tikzpicture}[align=center]
    \node[fill=lightgray] (h1) at (-1,1) {$h_1$};
    \node[fill=lightgray] (h2) at (-1,-1) {$h_2$};
    \node[fill=lightgray] (h3) at (3.5,1) {$h_3$};
    \node[fill=lightgray] (h4) at (4,-1) {$h_4$};
    
    \node[] (1) at (-2.5,0) {$v_1$};
    \node[] (2) at (-1.5,0) {$v_2$};
    \node[] (3) at (-0.5,0) {$v_3$};
    \node[] (4) at (0.5,0) {$v_4$};
    \node[] (5) at (1.5,0) {$v_5$};
    \node[] (6) at (2.5,0) {$v_6$};
    \node[] (7) at (3.5,0) {$v_7$};
    \node[] (8) at (4.5,0) {$v_8$};
    \node[] (9) at (5.5,0) {$v_9$};
    
    \draw[MyBlue] [-latex] (h1) edge (1);
    \draw[MyBlue] [-latex] (h1) edge (2);
    \draw[MyBlue] [-latex] (h1) edge (3);
    \draw[MyBlue] [-latex] (h1) edge (4);
    \draw[MyBlue] [-latex] (h1) edge (5);
    \draw[MyBlue] [-latex] (h1) edge (7);
    \draw[MyRed] [-latex] (h2) edge (1);
    \draw[MyRed] [-latex] (h2) edge (2);
    \draw[MyRed] [-latex] (h2) edge (3);
    \draw[MyRed] [-latex] (h2) edge (4);
    \draw[MyRed] [-latex] (h2) edge (6);
    \draw[MyGreen] [-latex] (h3) edge (5);
    \draw[MyGreen] [-latex] (h3) edge (6);
    \draw[MyGreen] [-latex] (h3) edge (7);
    \draw[MyGreen] [-latex] (h3) edge (8);
    \draw[MyGreen] [-latex] (h3) edge (9);
    \draw[MyBrown] [-latex] (h4) edge (6);
    \draw[MyBrown] [-latex] (h4) edge (7);
    \draw[MyBrown] [-latex] (h4) edge (8);
    \draw[MyBrown] [-latex] (h4) edge (9);

\end{tikzpicture}
\caption{Graph that is certified to be generically sign-identifiable by Theorem~\ref{thm:f-identifiability}.}
\label{fig:example-f-id}
\end{center}
\end{figure}

\begin{definition} \label{def:full-factor-criterion}
Let $G=(V \cup \cH, D)$ be a factor analysis graph. We say that the tuple $(B, S) \in 2^V \times 2^{\cH}$ satisfies the \emph{local BB-criterion} if 
\begin{itemize}
    \item[(i)] the induced subgraph $\widetilde{G} = G[B \cup (\jpa(B)\setminus S)]$ is a full-ZUTA graph, 
    \item[(ii)] for all $h \in \jpa(B)\setminus S$, there is a $B$-first-ordering $\prec_h$ on $\ch(h)$ such that for all $v \in \ch(h) \setminus B$ there is $u \in \ch(h)$ with $u \prec_h v$ and $\jpa(\{v,u\}) \setminus S \subseteq \{\ell \in \jpa(B)\setminus S: \ell \preceq_{\text{ZUTA}} h \}$, where $\prec_{\text{ZUTA}}$ is the unique ZUTA-ordering on $\jpa(B)\setminus S$ induced by $\widetilde{G}$,
    \item[(iii)] for the edge set $\widetilde{D}$ of the subgraph $\widetilde{G}$ it holds that $|B| + |\widetilde{D}| < \binom{|B|+1}{2}$.
\end{itemize}
\end{definition}

\begin{theorem}
\label{thm:f-identifiability}
Let $G=(V \cup \cH, D)$ be a factor analysis graph and suppose that the tuple $(B,S)\in 2^V \times 2^{\cH}$ satisfies the local BB-criterion. If all nodes $\ell \in S$ are generically sign-identifiable, then all nodes $h \in \jpa(B)\setminus S$ are generically sign-identifiable. 
\end{theorem}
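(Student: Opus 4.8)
\textbf{Proof plan for Theorem~\ref{thm:f-identifiability}.}

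The plan is to reduce the identifiability of the nodes in $\jpa(B)\setminus S$ to an application of BB-identifiability (Theorem~\ref{thm:bekker-berge}) to the induced full-ZUTA subgraph $\widetilde G = G[B\cup(\jpa(B)\setminus S)]$, after first ``clearing'' the interfering contributions of the already-identified latent nodes in $S$ and of the latent nodes outside $\jpa(B)$. First I would fix a generic parameter pair $(\Omega,\Lambda)\in\Theta_G$ and an arbitrary element $(\widetilde\Omega,\widetilde\Lambda)$ of the fiber $\mathcal F_G(\Omega,\Lambda)$. By hypothesis every $\ell\in S$ is generically sign-identifiable, so after absorbing a fixed sign matrix we may assume $\widetilde\Lambda_{\ch(\ell),\ell}=\Lambda_{\ch(\ell),\ell}$ for all $\ell\in S$. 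The key observation is then that the submatrix $[\Sigma-\Omega]_{B,B}=[\Lambda\Lambda^\top]_{B,B}$, corrected by subtracting the known rank contributions $\sum_{\ell\in S}\Lambda_{B,\ell}\Lambda_{B,\ell}^\top$, equals $\sum_{h\notin S}\Lambda_{B,h}\Lambda_{B,h}^\top$; and by the definition of $\jpa(B)$, any latent node $h\notin\jpa(B)\cup S$ contributes a rank-$\le 1$ term whose support meets at most one element of $B$, hence contributes only to the diagonal of $[\Lambda\Lambda^\top]_{B,B}$. Thus the off-diagonal part of the corrected matrix depends only on $\Lambda_{B,\jpa(B)\setminus S}$, i.e.\ on the loading matrix of the subgraph $\widetilde G$, and this same structure must hold for $\widetilde\Lambda$.

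Second, I would invoke condition~(i) that $\widetilde G$ is full-ZUTA together with condition~(iii) $|B|+|\widetilde D|<\binom{|B|+1}{2}$, so that Theorem~\ref{thm:bekker-berge} applies to $\widetilde G$: on its own parameter space the pair $(\widetilde\Omega',\widetilde\Lambda_{B,\jpa(B)\setminus S})$ is forced, generically, to agree with $(\Omega',\Lambda_{B,\jpa(B)\setminus S})$ up to column signs, where the diagonal $\Omega'$ is the effective noise of the subgraph (the true $\Omega$ restricted to $B$, plus the diagonal junk from latent nodes outside $\jpa(B)\cup S$ and from the already-matched $S$-columns restricted to $B$). The subtle point is that BB-identifiability is a statement about \emph{generic} parameters of $\widetilde G$, while the loadings $\Lambda_{B,\jpa(B)\setminus S}$ we feed in are the restriction of a generic $\Lambda\in\mathbb R^D$; I would argue that this restriction is still generic in $\mathbb R^{\widetilde D}$ because $\widetilde D$ is exactly the set of edges of $G$ with both endpoints in $B\cup(\jpa(B)\setminus S)$ and these coordinates of $\Lambda$ are free, and likewise the effective diagonal $\Omega'$ is generic since $\Omega$ is. This yields $\widetilde\Lambda_{B,h}=\pm\Lambda_{B,h}$ for each $h\in\jpa(B)\setminus S$.

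Third, I would upgrade ``agreement on $\Lambda_{B,h}$ up to sign'' to full generic sign-identifiability of $h$, i.e.\ agreement on all of $\Lambda_{\ch(h),h}$ up to the same sign, by handling the children of $h$ that lie outside $B$. This is exactly what condition~(ii) is engineered for: for $v\in\ch(h)\setminus B$ there is $u\in\ch(h)$ with $u\prec_h v$ and $\jpa(\{v,u\})\setminus S\subseteq\{\ell\in\jpa(B)\setminus S:\ell\preceq_{\mathrm{ZUTA}}h\}$. Processing the latent nodes $h\in\jpa(B)\setminus S$ in their ZUTA-order, I would assume inductively that all $\ell\prec_{\mathrm{ZUTA}}h$ have already been shown generically sign-identified and that $\Lambda_{B,h}$ is known up to $\mathrm{sign}(h)$; then for each new child $v$ in $\ch(h)\setminus B$, taking the $(u,v)$ entry of $\Sigma-\Omega$ and subtracting the now-known contributions of all latent nodes in $\jpa(\{v,u\})\setminus\{h\}$ (they lie in $S$ or are $\preceq_{\mathrm{ZUTA}}h$, hence already matched, using the $B$-first-ordering to ensure $u$ itself is handled) isolates $\lambda_{uh}\lambda_{vh}$; since $\lambda_{uh}$ is nonzero generically and already identified up to $\mathrm{sign}(h)$, this pins down $\lambda_{vh}$ with the same sign. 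Iterating over all $v$ gives $\widetilde\Lambda_{\ch(h),h}=\Lambda_{\ch(h),h}$ and, by Definition~\ref{def:identifiability}, generic sign-identifiability of $h$; since $h\in\jpa(B)\setminus S$ was arbitrary, the theorem follows.

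\textbf{Main obstacle.} I expect the delicate step to be the genericity bookkeeping in the second and third parts: one must check that restricting a generic $\Lambda\in\mathbb R^D$ and generic $\Omega\in\mathbb R^{|V|}_{>0}$ produces parameters of $\widetilde G$ that avoid the measure-zero bad set of Theorem~\ref{thm:bekker-berge}, and that the finitely many rational identification formulas used in the inductive child-recovery step have nonvanishing denominators on a common full-measure set; the $B$-first-ordering and the containment in condition~(ii) are precisely the combinatorial conditions guaranteeing that, at the moment we need to divide by $\lambda_{uh}$, the node $u$ and all competing joint parents have already been resolved, so the argument is really about verifying that the ZUTA order on $\jpa(B)\setminus S$ together with the per-node $B$-first-orderings can be woven into a single consistent elimination order.
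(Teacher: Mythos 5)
Your proposal is correct and follows essentially the same route as the paper: subtract the already-identified $S$-contributions to obtain a corrected covariance whose $B\times B$ block lies in the model of the full-ZUTA subgraph $\widetilde G$, apply BB-identifiability (conditions (i) and (iii)) to pin down $\Lambda_{B,\jpa(B)\setminus S}$ up to column signs, and then run the same double induction — outer over the ZUTA-order on $\jpa(B)\setminus S$, inner over the $B$-first-ordering $\prec_h$ — using condition (ii) to isolate $\lambda_{uh}\lambda_{vh}$ and divide by the already-recovered $\lambda_{uh}$. The genericity bookkeeping you flag as the main obstacle is handled in the paper exactly as you sketch (the coordinates of $\Lambda$ on $\widetilde D$ are free, so the restriction avoids the measure-zero exceptional sets, and the finitely many rational formulas have generically nonvanishing denominators).
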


Similar as for M-identifiability, Theorem~\ref{thm:f-identifiability} allows us to recursively certify generic sign-identifiability of a factor analysis graph by checking whether all nodes $h \in \cH$ are generically sign-identifiable; recall Lemma~\ref{lem:nodes-vs-graph-id}.

\begin{example} \label{ex:F-id}
We can use Theorem~\ref{thm:f-identifiability} to recursively certify generic sign-identifiability of  all latent nodes of the graph displayed in Figure~\ref{fig:example-f-id}.
\begin{itemize}
    \item[\underline{$h_1,h_2$:}] Take $B=\{v_1, \ldots, v_5\}$ and $S=\emptyset$ such that  $\jpa(B)\setminus S = \{h_1,h_2\}$. Observe that  $G[B \cup (\jpa(B)\setminus S)]$ is a full-ZUTA graph such that Condition (iii) is satisfied. Note that the unique ZUTA-ordering  on $\jpa(B)$ is given by $h_1 \prec_{\text{ZUTA}} h_2$, i.e., to verify Condition (ii) we proceed according to this ordering on the latent nodes.  The only child of $h_1$ that is not a member of $B$ is $v_7$. Take any ordering $\prec_{h_1}$ on $\ch(h_1)$ such that $v_7$ is the largest node according $\prec_{h_1}$. Then the ordering $\prec_{h_1}$ is a $B$-first-ordering and $v_1 \prec_{h_1} v_7$. Moreover, observe that $\jpa(\{v_1,v_7\}) = \{h_1\}$. Similarly, we can take any ordering $\prec_{h_2}$ on $\ch(h_2)$ such that $v_6$ is the largest node according $\prec_{h_2}$. Since $\jpa(\{v_1,v_6\}) = \{h_2\}$, we conclude that Condition (ii) is satisfied. \looseness=-1
    \item[\underline{$h_3,h_4$:}] Take $U=\{v_5, \ldots, v_9\}$ and $S=\{h_1,h_2\}$ such that  $\jpa(B)\setminus S = \{h_3,h_4\}$. It is easy to verify that Conditions (i) and (iii) are satisfied. Moreover, we have that $\ch(h_i)\setminus B = \emptyset$ for $i=3,4$, that is, Condition (ii) is trivially satisfied.
\end{itemize}
On the other hand, each observed node in the graph in Figure~\ref{fig:example-f-id} has at least two latent parents. This implies that ZUTA is not satisfied and hence, due to Corollary~\ref{cor:zuta-satisfied}, the graph is not M-identifiable.
\end{example}

Next, we show that the recursive application of Theorem~\ref{thm:f-identifiability} subsumes BB-identifiability, that is, we show equivalence on full-ZUTA graphs. Crucially, Theorem~\ref{thm:f-identifiability} is also able to certify generic sign-identifiability of \emph{sparse} graphs.

\begin{corollary} \label{cor:full-ZUTA-BB-iff-F}
A full-ZUTA graph $G=(V \cup \cH, D)$ is BB-identifiable if and only if generic sign-identifiability of $G$ can be certified by recursively applying Theorem~\ref{thm:f-identifiability}. 
\end{corollary}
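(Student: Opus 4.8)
The plan is to prove the two implications separately, with the forward direction being essentially immediate and the reverse direction requiring a careful unwinding of what the recursive procedure can do on a full-ZUTA graph. For the forward direction, suppose $G=(V\cup\cH,D)$ is a full-ZUTA graph with $|V|+|D|<\binom{|V|+1}{2}$. I would apply Theorem~\ref{thm:f-identifiability} once with $B=V$ and $S=\emptyset$. Then $\jpa(B)\setminus S=\jpa(V)=\cH$ (every latent node has at least two children in a full-ZUTA graph, assuming the non-degenerate case; nodes with fewer children are trivially sign-identifiable and can be absorbed into $S$ a priori or handled separately), the induced subgraph $G[B\cup(\jpa(B)\setminus S)]=G$ is full-ZUTA by hypothesis so Condition~(i) holds, Condition~(iii) is exactly the BB-inequality, and Condition~(ii) needs to be checked: here $\ch(h)\setminus B=\emptyset$ for every $h$ since $B=V$, so (ii) is vacuous. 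Hence all nodes of $\cH$ are certified generically sign-identifiable in a single step, and by Lemma~\ref{lem:nodes-vs-graph-id} the graph is generically sign-identifiable; this is the content of ``certified by recursively applying Theorem~\ref{thm:f-identifiability}.''

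For the reverse direction, suppose generic sign-identifiability of the full-ZUTA graph $G$ is certified by some recursive application of Theorem~\ref{thm:f-identifiability}; I must show $|V|+|D|<\binom{|V|+1}{2}$, i.e., that $G$ is BB-identifiable. The key structural fact I would use is that a full-ZUTA graph has a \emph{unique} ZUTA-ordering $h_1\prec\cdots\prec h_m$ with $\ch(h_1)=V$ and $\ch(h_{i+1})=\ch(h_i)\setminus\{v_i\}$, so the induced subgraphs along this ordering are nested full-ZUTA graphs. I would argue that in the recursive certification there is a ``first'' successful application of the local BB-criterion, say with tuple $(B,S)$ where $S$ consists of nodes already certified; by taking the first step we may assume $S=\emptyset$ (or, more carefully, induct on the number of nodes still to be certified and note that any node already in $S$ must itself have been the $\jpa(B')\setminus S'$ of an earlier step, so we can peel the recursion from the bottom). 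The crux is then to show that a successful local BB-step on a full-ZUTA graph, when combined across the recursion to cover all of $\cH$, forces the global inequality $|V|+|D|<\binom{|V|+1}{2}$ — essentially that one cannot ``cheat'' by splitting $G$ into smaller full-ZUTA pieces each satisfying the local inequality while the global count exceeds the bound.

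The main obstacle, and the step deserving the most care, is exactly this last point: ruling out the possibility that the recursion certifies $G$ via a sequence of strict sub-steps (each on a proper full-ZUTA subgraph $G[B\cup(\jpa(B)\setminus S)]$ satisfying $|B|+|\widetilde D|<\binom{|B|+1}{2}$) even though $|V|+|D|=\binom{|V|+1}{2}$ — the Ledermann-bound case, which by the Wilson–Wilson reference and Theorem~\ref{thm:bekker-berge} is genuinely non-identifiable for full-ZUTA graphs. I would handle this by a counting argument exploiting the nested structure: writing $d_i=|\ch(h_i)|=|V|-(i-1)$ in the unique ZUTA-ordering, one has $|V|+|D|=\sum_i(d_i+1)-\binom{m}{2}$-type identities, and I would show that if some step uses a set $B\subsetneq\ch(h_j)$ for the smallest latent node $h_j$ it certifies, then Condition~(ii) (the $B$-first-ordering condition, which in a full-ZUTA graph pins down which children outside $B$ can be reached and through which joint parents) together with the requirement that \emph{all} of $\cH$ eventually gets covered forces the uncovered-by-$B$ children to be re-certified through later steps whose own local inequalities, summed appropriately, telescope to the global one. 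Concretely, I expect to show that any valid recursive certification induces a partition-like decomposition of the edge count $|D|$ across steps in which the per-step slack $\binom{|B|+1}{2}-|B|-|\widetilde D|$ aggregates (sub-additively, because of overlaps controlled by Condition~(ii) and the ZUTA-ordering) to at most $\binom{|V|+1}{2}-|V|-|D|$, so strict positivity somewhere forces strict positivity globally — and conversely the single-step argument above shows one step suffices, closing the loop. If a cleaner route is available, it would be to invoke Corollary~\ref{cor:BB-subsumes-AR} and Corollary~\ref{cor:subsumes-AR}(ii) to reduce the ``many small steps'' scenario to the already-understood AR/M-identifiability picture on full-ZUTA graphs, but I anticipate the direct counting argument to be the robust path.
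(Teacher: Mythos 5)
Your forward direction is essentially the paper's argument: one application of Theorem~\ref{thm:f-identifiability} with $B=V$, $S=\emptyset$, where (ii) is vacuous and (iii) is the BB-inequality. (One small point: the fact that $\jpa(V)=\cH$ should be derived from the BB-inequality itself, which forces $|\cH|<|V|$ and hence every latent node to have at least two children; your parenthetical about "absorbing" low-degree latent nodes into $S$ does not work, since such nodes are not generically sign-identifiable and so cannot be placed in $S$ at the first step.)

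The reverse direction, however, has a genuine gap. You correctly reduce to the first local BB-step, which must have $S=\emptyset$, but then you leave the crux as an "I expect to show" telescoping argument: that the per-step slacks $\binom{|B|+1}{2}-|B|-|\widetilde D|$ of a multi-step certification aggregate sub-additively to the global slack. You give no proof of this sub-additivity, and it is aimed at a scenario that in fact cannot occur: the missing idea is that in a full-ZUTA graph \emph{any} tuple $(B,\emptyset)$ satisfying the local BB-criterion already has $\jpa(B)=\cH$, so the recursion collapses to a single step and no aggregation over steps is ever needed. Concretely, Condition (iii) is equivalent (for a full-ZUTA induced subgraph) to $|B|\geq \lfloor m+\tfrac12\sqrt{8m+1}+\tfrac12\rfloor+1$ with $m=|\jpa(B)|$, which in particular forces $|B|\geq|\jpa(B)|+2$. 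Writing the unique ZUTA-ordering of $G$ with dropped children $v_1,\dots,v_{m-1}$, either two elements of $B$ lie outside $\{v_1,\dots,v_{m-1}\}$, in which case they have all of $\cH$ as joint parents and $\jpa(B)=\cH$; or at least $|B|-1$ elements of $B$ are dropped nodes, and the pair among them with the two largest indices already has at least $|B|-1$ joint parents, contradicting $|\jpa(B)|\leq|B|-2$. Once $\jpa(B)=\cH$ is established, BB-identifiability of $G$ follows simply because the threshold on $|B|$ depends only on $|\cH|$ and $|V|\geq|B|$, so Condition (iii) for $(V,\emptyset)$ holds as well; this is exactly how the Ledermann-bound case is excluded, without any counting across steps. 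Your fallback of invoking Corollaries~\ref{cor:BB-subsumes-AR} and~\ref{cor:subsumes-AR}(ii) also does not close the gap, since those results relate AR- and M-identifiability and say nothing about what a recursive local BB certification can look like.
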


\tikzset{
      every node/.style={circle, inner sep=0.3mm, minimum size=0.5cm, draw, thick, black, fill=white, text=black},
      every path/.style={thick}
}
\begin{figure}[t]
\begin{center}
\begin{tikzpicture}[align=center]
    \node[fill=lightgray] (h1) at (-1,1) {$h_1$};
    \node[fill=lightgray] (h2) at (0,-1) {$h_2$};
    \node[fill=lightgray] (h3) at (2,1) {$h_3$};
    \node[fill=lightgray] (h4) at (3,-1) {$h_4$};
    \node[fill=lightgray] (h5) at (5,1) {$h_5$};
    
    \node[] (1) at (-2.5,0) {$v_1$};
    \node[] (2) at (-1.5,0) {$v_2$};
    \node[] (3) at (-0.5,0) {$v_3$};
    \node[] (4) at (0.5,0) {$v_4$};
    \node[] (5) at (1.5,0) {$v_5$};
    \node[] (6) at (2.5,0) {$v_6$};
    \node[] (7) at (3.5,0) {$v_7$};
    \node[] (8) at (4.5,0) {$v_8$};
    \node[] (9) at (5.5,0) {$v_9$};
    \node[] (10) at (6.5,0) {$v_{10}$};
    
    \draw[MyBlue] [-latex] (h1) edge (1);
    \draw[MyBlue] [-latex] (h1) edge (2);
    \draw[MyBlue] [-latex] (h1) edge (3);
    \draw[MyBlue] [-latex] (h1) edge (4);
    \draw[MyBlue] [-latex] (h1) edge (5);
    \draw[MyBlue] [-latex] (h1) edge (6);
    \draw[MyBlue] [-latex] (h1) edge (7);
    \draw[MyBlue] [-latex] (h1) edge (8);
    \draw[MyRed] [-latex] (h2) edge (2);
    \draw[MyRed] [-latex] (h2) edge (3);
    \draw[MyRed] [-latex] (h2) edge (4);
    \draw[MyRed] [-latex] (h2) edge (5);
    \draw[MyRed] [-latex] (h2) edge (6);
    \draw[MyRed] [-latex] (h2) edge (7);
    \draw[MyRed] [-latex] (h2) edge (8);
    \draw[MyGreen] [-latex] (h3) edge (3);
    \draw[MyGreen] [-latex] (h3) edge (4);
    \draw[MyGreen] [-latex] (h3) edge (5);
    \draw[MyGreen] [-latex] (h3) edge (6);
    \draw[MyGreen] [-latex] (h3) edge (7);
    \draw[MyGreen] [-latex] (h3) edge (8);
    \draw[MyBrown] [-latex] (h4) edge (4);
    \draw[MyBrown] [-latex] (h4) edge (5);
    \draw[MyBrown] [-latex] (h4) edge (6);
    \draw[MyBrown] [-latex] (h4) edge (7);
    \draw[MyBrown] [-latex] (h4) edge (8);
    \draw[] [-latex] (h5) edge (7);
    \draw[] [-latex] (h5) edge (9);
    \draw[] [-latex] (h5) edge (10);

\end{tikzpicture}
\caption{Extended M-identifiable sparse factor analysis graph.}
\label{fig:example-c-id}
\end{center}
\end{figure}
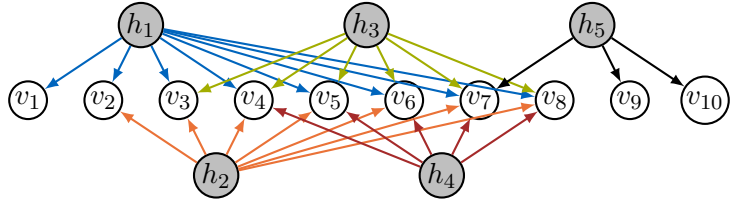

We obtain our final criterion by combining Theorems~\ref{thm:identifiability} and~\ref{thm:f-identifiability} iteratively in a recursive algorithm. We call a factor analysis graph \textit{extended M-identifiable} if generic sign-identifiability can be certified recursively by Theorems~\ref{thm:identifiability} and~\ref{thm:f-identifiability} for all nodes $h \in \cH$. We have already seen in Example~\ref{ex:F-id} that extended M-identifiability may also certify generic sign-identifiability of graphs not satisfying ZUTA. We now provide a further example, where we consider a graph that is extended M-identifiable but applying only one of Theorem~\ref{thm:identifiability} or Theorem~\ref{thm:f-identifiability} does not certify generic sign-identifiability. 

\begin{example}
The factor analysis graph in Figure~\ref{fig:example-c-id} is extended M-identifiable. To see this, we recursively check all latent nodes $\cH=\{h_1, \ldots, h_5\}$.
\begin{itemize}
    \item[\underline{$h_5$:}] The tuple $(v,W,U,S)$ with $v=v_{10}$, $S=\emptyset$, $U=\{v_7\}$ and $W=\{v_9\}$ satisfies the matching criterion with respect to $h_5$. Conditions (i) and (ii) are easily checked, and for Condition (iii) an intersection-free matching is given by $v_7 \leftarrow h_5 \rightarrow v_9$. To verify Condition (iv), note that $\pa(\{v\} \cup U) \cap \pa(\{v\} \cup W)=\{h_5\}$, which implies that there cannot exist an intersection-free matching of $\{v\} \cup U)$ and $\{v\} \cup W$.
    \item[\underline{$\cH \setminus h_5$:}] The tuple $(B,S)$ with $B=\{v_1, \ldots, v_8\}$ and $S=\{h_5\}$ satisfies the local BB-criterion and it holds that $\jpa(B)\setminus S = \cH \setminus h_5$. The induced subgraph $G[B \cup \jpa(B) \setminus S)]$ is a full-ZUTA graph on $8$ observed nodes and $4$ latent nodes for which Condition (iii) holds. Since $\ch(h_i)\setminus B = \emptyset$ for $i=1, \ldots, 4$, Condition (ii) is trivially satisfied.
\end{itemize}
\end{example}

Finally, we emphasize that extended M-identifiability is only sufficient for generic sign-identifiability. Both graphs in Figure~\ref{fig:sign-id-not-c-id} can be shown to be generically sign-identifiable via techniques from computational algebra but are not extended M-identifiable.

\tikzset{
      every node/.style={circle, inner sep=0.3mm, minimum size=0.5cm, draw, thick, black, fill=white, text=black},
      every path/.style={thick}
}
\begin{figure}[t]
\centering
{
(a)
\begin{tikzpicture}[align=center]
    \node[fill=lightgray] (h1) at (-0.5,1) {$h_1$};
    \node[fill=lightgray] (h2) at (0.5,-1) {$h_2$};
    \node[fill=lightgray] (h3) at (1,1) {$h_3$};
    
    \node[] (1) at (-2.5,0) {$v_1$};
    \node[] (2) at (-1.5,0) {$v_2$};
    \node[] (3) at (-0.5,0) {$v_3$};
    \node[] (4) at (0.5,0) {$v_4$};
    \node[] (5) at (1.5,0) {$v_5$};
    \node[] (6) at (2.5,0) {$v_6$};
    
    \draw[MyBlue] [-latex] (h1) edge (1);
    \draw[MyBlue] [-latex] (h1) edge (4);
    \draw[MyBlue] [-latex] (h1) edge (5);
    \draw[MyBlue] [-latex] (h1) edge (6);
    \draw[MyRed] [-latex] (h2) edge (2);
    \draw[MyRed] [-latex] (h2) edge (4);
    \draw[MyRed] [-latex] (h2) edge (5);
    \draw[MyRed] [-latex] (h2) edge (6);
    \draw[MyGreen] [-latex] (h3) edge (3);
    \draw[MyGreen] [-latex] (h3) edge (4);
    \draw[MyGreen] [-latex] (h3) edge (5);
    \draw[MyGreen] [-latex] (h3) edge (6);
\end{tikzpicture}
\hspace{2cm}
(b)
\begin{tikzpicture}[align=center]
    \node[fill=lightgray] (h1) at (-0.5,1) {$h_1$};
    \node[fill=lightgray] (h2) at (0.5,-1) {$h_2$};
    \node[fill=lightgray] (h3) at (1,1) {$h_3$};
    
    \node[] (1) at (-2.5,0) {$v_1$};
    \node[] (2) at (-1.5,0) {$v_2$};
    \node[] (3) at (-0.5,0) {$v_3$};
    \node[] (4) at (0.5,0) {$v_4$};
    \node[] (5) at (1.5,0) {$v_5$};
    \node[] (6) at (2.5,0) {$v_6$};
    
    \draw[MyBlue] [-latex] (h1) edge (1);
    \draw[MyBlue] [-latex] (h1) edge (2);
    \draw[MyBlue] [-latex] (h1) edge (6);
    \draw[MyRed] [-latex] (h2) edge (2);
    \draw[MyRed] [-latex] (h2) edge (3);
    \draw[MyRed] [-latex] (h2) edge (4);
    \draw[MyRed] [-latex] (h2) edge (5);
    \draw[MyRed] [-latex] (h2) edge (6);
    \draw[MyGreen] [-latex] (h3) edge (3);
    \draw[MyGreen] [-latex] (h3) edge (4);
    \draw[MyGreen] [-latex] (h3) edge (5);
    \draw[MyGreen] [-latex] (h3) edge (6);
\end{tikzpicture}
}
\caption{Two generically sign-identifiable graphs that are not extended M-identifiable.}
\label{fig:sign-id-not-c-id}
\end{figure}
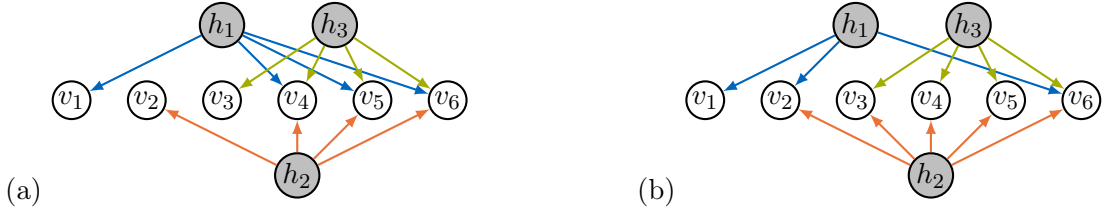

\section{Computation} \label{sec:computation}
M-identifiability and extended M-identifiability are decidable in polynomial time under certain bounds on the sizes of the subsets involved. In Appendix~\ref{sec:algos}, we detail efficient algorithms  that are sound and complete.

\begin{theorem}\label{thm:decidable-M}
M-identifiability of a factor analysis graph $G=(V \cup \mathcal{H}, D)$ is decidable in time $\mathcal{O}(|\cH|^{2} |V|^{k+1} (|V|+|\cH|)^3)$ if we only allow sets $W$ with $|W| \leq k$ in the matching criterion.
\end{theorem}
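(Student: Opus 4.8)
The plan is to recast M-identifiability as a monotone fixed-point computation and then bound the cost of a single iteration. By Lemma~\ref{lem:nodes-vs-graph-id} together with the recursive shape of Theorem~\ref{thm:identifiability}, the set of nodes that can be certified generically sign-identifiable by repeated use of the matching criterion is exactly the limit of the chain $\mathcal{I}_0 \subseteq \mathcal{I}_1 \subseteq \cdots$, where $\mathcal{I}_0 = \{h \in \cH : \ch(h)=\emptyset\}$ and $\mathcal{I}_{t+1}$ is obtained from $\mathcal{I}_t$ by adding every $h \notin \mathcal{I}_t$ for which some tuple $(v,W,U,S)$ with $S \subseteq \mathcal{I}_t$ satisfies the matching criterion of Definition~\ref{def:matching-criterion} with respect to $h$. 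Since each non-trivial round adds at least one node, the chain stabilizes after at most $|\cH|$ rounds, and $G$ is M-identifiable if and only if the limit equals $\cH$. It therefore suffices to show that one round --- that is, for a fixed already-certified set $\mathcal{I}$, testing all remaining nodes $h$ --- can be carried out in time $\mathcal{O}(|\cH|\,|V|^{k+1}(|V|+|\cH|)^3)$; multiplying by the $\mathcal{O}(|\cH|)$ rounds then yields the claimed bound, and soundness/completeness of the whole procedure follows from Theorem~\ref{thm:identifiability} and the fixed-point characterization above.

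For a fixed $h \notin \mathcal{I}$, I would first reduce the search to $S = \mathcal{I}$. Condition~(iv) only gets easier as $S$ grows, and Condition~(i), $\pa(v)\setminus S = \{h\}$, forces $\pa(v)\setminus\{h\}\subseteq S$ and is preserved under enlarging $S$ as long as $h\notin S$, which holds since $h\notin\mathcal{I}$. The one obstruction is Condition~(iii): an intersection-free matching of $W$ and $U$ avoiding $S$ may route through latents in $\mathcal{I}\setminus S$. One removes this obstruction by absorbing exactly those latents into $S$ and deleting the corresponding path endpoints from $W$ and $U$; the smaller sets still satisfy (i)--(iv) --- the new matching for (iii) now avoids all of $\mathcal{I}$, (i) survives because $h\notin\mathcal{I}$, and the (iv)-witness argument still goes through because the deleted paths use latents lying in $\mathcal{I}$ and hence in the enlarged $S$, so a hypothetical new matching recombines with them into an old one, contradicting the original (iv). With $S$ fixed to $\mathcal{I}$, it then suffices to enumerate $v \in \ch(h)$ with $\pa(v)\subseteq\mathcal{I}\cup\{h\}$ (at most $|V|$ choices, each checkable in $\mathcal{O}(|\cH|)$) and nonempty $W\subseteq V\setminus\{v\}$ with $|W|\le k$ (at most $\mathcal{O}(|V|^k)$ choices); the cardinality restriction is precisely what keeps the enumeration polynomial.

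The core of the argument is a subroutine that, given $h$, $\mathcal{I}$, $v$, $W$, decides with a constant number of max-flow computations whether a valid $U$ exists. Since $\pa(v)\setminus\mathcal{I}=\{h\}$, any intersection-free matching of $\{v\}\cup W$ and $\{v\}\cup U$ avoiding $\mathcal{I}$ must route $v$ through the path $v\leftarrow h\rightarrow v$, so (iii) together with (iv) is equivalent to the statement that $W$ admits an intersection-free matching to $U$ using latents in $\cH\setminus\mathcal{I}$ but none using latents in $\cH\setminus\mathcal{I}\setminus\{h\}$ --- equivalently, $h$ is essential for matching $W$ to $U$. Packing $|W|$ vertex-disjoint paths $w\leftarrow\ell\rightarrow u$ is a standard flow problem on the split-vertex network in which a source feeds $W$, each latent $\ell$ is split into a unit-capacity arc, and the sink side is left free so that $U$ is produced by the flow rather than enumerated (this connects to the determinant characterization of Lemma~\ref{lem:determinant}). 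Comparing the maximum flow over the latent set $\cH\setminus\mathcal{I}$ with variants that pin or forbid flow through $h$, together with a min-cut/augmenting-path argument ensuring the same $U$ witnesses (iii) and defeats (iv), decides existence of a valid $U$. Each network has $\mathcal{O}(|V|+|\cH|)$ nodes, so each max-flow costs $\mathcal{O}((|V|+|\cH|)^3)$, and the tally $\mathcal{O}(|\cH|)\cdot\mathcal{O}(|\cH|)\cdot\mathcal{O}(|V|)\cdot\mathcal{O}(|V|^k)\cdot\mathcal{O}((|V|+|\cH|)^3)$ gives the bound.

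The step I expect to be the main obstacle is exactly the design and correctness proof of this flow subroutine: folding the existential quantifier over $U$ into the network while simultaneously enforcing essentiality of $h$ is delicate, and obtaining it with only $\mathcal{O}(1)$ flow computations --- rather than paying an extra factor of $|V|$ by also enumerating a second path endpoint in $\ch(h)$ --- is where the real work lies. The complete construction, including the max-flow gadgetry and its soundness and completeness, is deferred to Appendix~\ref{sec:algos}.
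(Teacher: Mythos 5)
Your overall architecture matches the paper's: the monotone fixed-point recursion over latent nodes is exactly Algorithm~\ref{alg:check-M-id}; the reduction to taking $S$ equal to the full set of already-certified nodes (your absorption argument, including the recombination trick for condition (iv)) is Lemma~\ref{lem:choice-of-S}; the enumeration of $v\in\ch(h)$ with $\pa(v)\setminus S=\{h\}$ and of $W$ with $|W|\le k$, and the verification by max flow on a split-vertex network, correspond to Algorithm~\ref{alg:check-M} and Lemma~\ref{lem:verify-tuple}. Your observation that, because $\pa(v)\setminus S=\{h\}$, conditions (iii) and (iv) for a \emph{fixed} $U$ are equivalent to ``$h$ is essential for matching $W$ to $U$ avoiding $S$'' is also correct.

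The genuine gap is the core subroutine you rely on and defer: deciding with $\mathcal{O}(1)$ max-flow computations whether \emph{some} $U$ exists for which $h$ is essential. The quantifier structure is $\exists U\colon[\text{$W,U$ matchable avoiding }S]\wedge[\text{not matchable avoiding }S\cup\{h\}]$, and the comparisons you sketch (flow with $h$ available versus $h$ forbidden or pinned, with the sink side left free so that ``$U$ is produced by the flow'') do not decide this. Already for $k=1$, $W=\{w\}$: if one target $u_1$ has $\jpa(\{w,u_1\})\setminus S=\{h\}$ while another target $u_2$ shares a parent with $w$ outside $S\cup\{h\}$, then $U=\{u_1\}$ is a valid witness, yet the max flow forbidding $h$ still has value $|W|$ (through $u_2$), so the flow-value comparison detects nothing, and the flow that ``produces $U$'' may select $u_2$, for which (iv) fails. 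Thus the soundness and completeness of the gadget is precisely the unresolved point, which you yourself flag as the main obstacle and push to an appendix; in the actual paper that appendix contains a different algorithm. The paper needs no such gadget: it enumerates $U$ as well (restricted to $U\subseteq\ch(\pa(W)\setminus S)$ with $h\in\pa(U)$, using the pruning Lemma~\ref{lem:choice-of-W-and-U}) and certifies each fixed tuple $(v,W,U,S)$ by the two max-flow computations of Lemma~\ref{lem:verify-tuple}, with the complexity accounting done in Theorem~\ref{thm:algorithm-one-node}. Without a worked-out, provably correct replacement for the enumeration of $U$, your proposal does not establish the stated decidability bound.
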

\begin{proof}
    See Theorem~\ref{thm:algorithm-one-node} and Algorithm~\ref{alg:check-M-id} in Appendix~\ref{sec:algos}.
\end{proof}

\begin{theorem} \label{thm:decidable-extended-M}
Extended M-identifiability of a factor analysis graph $G=(V \cup \mathcal{H}, D)$ is decidable in time $\mathcal{O}(|\cH|^2|V|^{\max\{k,l\}+1} (|V|+|\cH|)^3)$ if we only allow sets $W$ with $|W| \leq k$ in the matching criterion and only sets $B$ with $|B| \leq \ell$ in the local BB-criterion.
\end{theorem}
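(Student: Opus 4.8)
The plan is to cast extended M-identifiability as the output of a monotone saturation procedure and to bound the cost of one run of it. We maintain a set $\mathcal{I} \subseteq \cH$ of nodes known to be generically sign-identifiable, starting from $\mathcal{I} = \{h \in \cH : \ch(h) = \emptyset\}$. In each round we search for a new certificate: (A) a latent node $h \notin \mathcal{I}$ together with a tuple $(v, W, U, S)$ with $|W| = |U| \le k$ and $S \subseteq \mathcal{I}$ satisfying the matching criterion (Definition~\ref{def:matching-criterion}) with respect to $h$; or (B) a pair $(B, S)$ with $|B| \le \ell$ and $S \subseteq \mathcal{I}$ satisfying the local BB-criterion (Definition~\ref{def:full-factor-criterion}). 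Any node certified via (A), or any node in $\jpa(B)\setminus S$ certified via (B), is added to $\mathcal{I}$; the procedure stops at the first round with no addition, and we declare $G$ extended M-identifiable if and only if $\mathcal{I} = \cH$ at termination. Soundness follows from Theorems~\ref{thm:identifiability} and~\ref{thm:f-identifiability} together with Lemma~\ref{lem:nodes-vs-graph-id}. For completeness and order-independence, note that neither criterion refers to $\mathcal{I}$ — only the side condition ``$S$ consists of already-certified nodes'' does — so enlarging $\mathcal{I}$ never destroys an applicable certificate; hence the procedure reaches a unique fixed point, which coincides with the set of nodes certifiable by the recursive definition. Since every productive round enlarges $\mathcal{I}$, there are at most $|\cH|$ productive rounds (plus one more to detect termination), so it suffices to bound the work of a single round.

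\textbf{The matching sweep.} For part (A) I would reuse the analysis behind Theorem~\ref{thm:decidable-M}: iterate over the $\le |\cH|$ targets $h$, the $\le |V|$ children $v$ of $h$ with $\pa(v)\setminus\{h\} \subseteq \mathcal{I}$, and the $\mathcal{O}(|V|^{k})$ sets $W$ of size at most $k$; for each such triple one must find a companion set $U$ and decide whether some admissible $S$ with $\pa(v)\setminus\{h\} \subseteq S \subseteq \mathcal{I}$ makes Conditions (iii) and (iv) hold. Using Lemma~\ref{lem:determinant}, Conditions (iii) and (iv) translate into the existence / non-existence of intersection-free matchings, which are detected by maximum flows in an auxiliary network of size $\mathcal{O}(|V|+|\cH|)$; the flow/cut structure also allows an implicit search over the admissible sets $S$, using that — given Condition (i) — Condition (iv) is equivalent to the absence of an intersection-free matching of $W$ and $U$ that avoids $S \cup \{h\}$, so the relevant $S$ can be read off from a minimum cut rather than enumerated. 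This costs $\mathcal{O}(|\cH|\,|V|^{k+1}(|V|+|\cH|)^{3})$ per round.

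\textbf{The local-BB sweep.} For part (B) I would enumerate the $\mathcal{O}(|V|^{\ell})$ sets $B$ with $|B| \le \ell$; for each, $\jpa(B)$ is computed in polynomial time, and since $G[B \cup (\jpa(B)\setminus S)]$ must be full-ZUTA its latent part has size $\le |B| \le \ell$, so there are only $2^{\mathcal{O}(\ell)}$ candidates for $\jpa(B)\setminus S$, for each of which $S$ may be taken maximal in $\mathcal{I}$ off $\jpa(B)$ and is then determined. For each candidate one checks Condition (i) by the standard greedy full-ZUTA test (which also returns the unique induced ZUTA-ordering), Condition (iii) by an arithmetic comparison, and Condition (ii) child-by-child — for each $h$ in the latent part and each $v \in \ch(h)\setminus B$, testing whether some $u \in \ch(h)$ can be placed before $v$ in a $B$-first-ordering with $\jpa(\{v,u\})\setminus S \subseteq \{\ell \preceq_{\mathrm{ZUTA}} h\}$, which is a polynomial-size feasibility question. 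All of this is $\mathcal{O}(|V|^{\ell}\cdot \mathrm{poly}(|V|+|\cH|))$ per round, hence dominated by $\mathcal{O}(|\cH|\,|V|^{\ell+1}(|V|+|\cH|)^{3})$. Adding the two sweeps and multiplying by the $\mathcal{O}(|\cH|)$ rounds gives $\mathcal{O}(|\cH|^{2}\,|V|^{\max\{k,\ell\}+1}(|V|+|\cH|)^{3})$.

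\textbf{Main obstacle.} The delicate point is handling the auxiliary set $S$ in both criteria without an exponential search: showing that the existence of a good $S$ for the matching criterion can be decided by flow/cut computations (via the reformulation of Condition (iv) above) rather than by enumeration, and that the full-ZUTA constraint pins $\jpa(B)\setminus S$ down to $2^{\mathcal{O}(\ell)}$ possibilities in the local BB-criterion. A secondary, more routine task is to verify that the full-ZUTA test, the computation of the induced ZUTA-ordering, and the verification of Condition (ii) of Definition~\ref{def:full-factor-criterion} all run in polynomial time; this, combined with the round-count argument and the per-sweep estimates, yields the stated bound. The detailed sound-and-complete algorithm is given in Appendix~\ref{sec:algos}.
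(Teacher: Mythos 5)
There is a genuine gap, and it sits exactly where you flag "the delicate point": the treatment of the auxiliary set $S$ (and, in the matching sweep, of the companion set $U$). Your budget of $\mathcal{O}\bigl((|V|+|\cH|)^3\bigr)$ per triple $(h,v,W)$ requires that a single flow/cut computation simultaneously produce a valid companion $U$ and an admissible $S$ with $\pa(v)\setminus\{h\}\subseteq S\subseteq\mathcal{I}$, but no argument is given for either. Your reformulation of Condition~(iv) under Condition~(i) — no intersection-free matching of $W$ and $U$ avoiding $S\cup\{h\}$ — is correct, yet it does not let you "read $S$ off a minimum cut": Conditions (iii) and (iv) pull $S$ in opposite directions (enlarging $S$ can destroy the matching required by (iii), shrinking it can create the forbidden matching in (iv)), so for fixed $(v,W,U)$ neither the minimal nor the maximal admissible $S$ is a safe choice, and existence of a good intermediate $S$ is not a cut condition you have established. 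The paper resolves this entirely differently: Lemma~\ref{lem:choice-of-S} proves that one may always take $S$ to be the \emph{full} set of already-solved nodes at the price of shrinking $W$ and $U$ (harmless, since all pairs of size $\le k$ are enumerated anyway, and Lemma~\ref{lem:verify-tuple} then checks each explicit tuple by two max-flow computations). Without such a monotonicity lemma, or a proof that your implicit search is correct, your matching sweep is not complete at the claimed cost.

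The same issue recurs in the local-BB sweep, compounded by a counting error. You propose to enumerate candidates for the latent part $\jpa(B)\setminus S$ and claim there are $2^{\mathcal{O}(\ell)}$ of them; but the candidates are subsets of $\jpa(B)$ (containing $\jpa(B)\setminus\mathcal{I}$) of size less than $|B|\le\ell$, and $\jpa(B)$ can have size up to $|\cH|$, so the count is $\mathcal{O}(|\cH|^{\ell})$, not $2^{\mathcal{O}(\ell)}$. With the corrected count your per-round cost picks up a factor $|\cH|^{\ell}$ on top of the $\mathcal{O}(|V|^{\ell})$ enumeration of $B$, which does not fit inside the stated bound $\mathcal{O}(|\cH|^2|V|^{\max\{k,\ell\}+1}(|V|+|\cH|)^3)$ in general. (Your idea of taking $S$ maximal once the latent part is fixed is sound, but it does not remove the enumeration over latent parts.) The paper again avoids any search over $S$: Lemma~\ref{lem:F-solved-nodes} shows that $(B,S)$ may be replaced by $(\widetilde B,\,S\cup\{h\})$ with a smaller $\widetilde B$, so in the recursion $S$ can be fixed to the solved set, the latent part is then determined by $B$, and Theorem~\ref{thm:algo-F-id} further restricts the enumeration to $B\subseteq\ch(h)$ with $|B|\ge 4$, verified by the full-ZUTA test (Algorithm~\ref{alg:check-full-ZUTA}) and the Condition-(ii) check (Algorithm~\ref{alg:check-cond-ii}). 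Your outer saturation loop, soundness via Theorems~\ref{thm:identifiability} and~\ref{thm:f-identifiability} with Lemma~\ref{lem:nodes-vs-graph-id}, and the $\mathcal{O}(|\cH|)$ round bound all match the paper; what is missing are precisely the two lemmas (or valid substitutes) that make the search over $S$ — and your implicit search over $U$ — unnecessary, and these are the substantive content of the paper's proof.
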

\begin{proof}
    See Theorem~\ref{thm:algo-F-id} and Algorithm~\ref{alg:check-extended-M-id} in Appendix~\ref{sec:algos}.
\end{proof}

If we allow the cardinality of the sets to be unbounded, then the algorithms we propose in the Appendix search over an exponentially large space and, thus, may take exponential time in the number of nodes. We conjecture that one cannot do significantly better.

\begin{conjecture} \label{conj:M-NP-hard}
    Deciding M-identifiability and extended M-identifiability both is NP-complete.
\end{conjecture}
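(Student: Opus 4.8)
The plan is to establish membership in NP (routine) and NP-hardness (the real work), both for M-identifiability and, with small modifications, for extended M-identifiability.

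\emph{Membership in NP.} A ``yes'' instance has a short certificate: the recursion witnessing identifiability. For M-identifiability this is an ordering $h_{(1)},\dots,h_{(m)}$ of $\cH$ together with, for each $j$, a tuple $(v_j,W_j,U_j,S_j)$ satisfying the matching criterion of Definition~\ref{def:matching-criterion} with respect to $h_{(j)}$ and with $S_j\subseteq\{h_{(1)},\dots,h_{(j-1)}\}$; for extended M-identifiability one additionally permits, in place of such a tuple, a pair $(B_j,S_j)$ satisfying the local BB-criterion of Definition~\ref{def:full-factor-criterion} and certifying a whole block of new nodes (invoking Lemma~\ref{lem:nodes-vs-graph-id} at the end). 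The certificate has size $O(|\cH|(|V|+|\cH|))$, and each step is checkable in polynomial time: Conditions (i) and (ii) of the matching criterion are immediate, while Conditions (iii) and (iv) reduce, via Lemma~\ref{lem:determinant}, to a single maximum-flow computation on the graph obtained by deleting the latent nodes in $S_j$ (the subroutine used by the algorithms of Appendix~\ref{sec:algos}); and for the local BB-criterion, testing that $G[B\cup(\jpa(B)\setminus S)]$ is full-ZUTA, that the required $B$-first-orderings exist, and that the counting inequality holds are all polynomial. Hence both problems are in NP.

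\emph{Reducing away the recursion.} The set of nodes certified by Theorem~\ref{thm:identifiability} (resp.\ Theorems~\ref{thm:identifiability} and~\ref{thm:f-identifiability}) grows monotonically: if a tuple with support $S$ certifies $h$ and $S$ lies in the currently certified set, the same tuple remains valid after more nodes are certified. Thus M-identifiability can be decided by iteratively computing the closure of the trivially identifiable nodes under the rule ``certify $h$ if some tuple with $S$ in the current set works''. Consequently, if that single query were polynomial-time, M-identifiability would be too; and by Theorem~\ref{thm:decidable-M} the query is polynomial once $|W|$ is bounded, so the hardness must reside in the unbounded search over $W$, $U$ and over $S$. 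I would therefore reduce from a matching-flavoured NP-complete problem---\textsc{3-Sat} or \textsc{Exact Cover by 3-Sets} being the natural candidates---to the question of whether one designated latent node $h^\star$, placed so as to be certified last, can be certified.

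\emph{Gadgets and the main obstacle.} A useful reformulation, valid under Condition (i) (i.e., $\pa(v)\setminus S=\{h\}$), is that Condition (iv) holds exactly when every intersection-free matching of $W$ and $U$ in the graph with $S$ deleted must pass through $h$; so one wants a graph in which a matching of $W$ and $U$ using $h$ exists iff the instance is satisfiable, while the nonexistence of any such matching avoiding $h$ enforces global consistency. Concretely: variable gadgets offering a binary choice (which of two ``twin'' observed nodes goes into $W$ versus $U$), clause/set gadgets that become matchable only when an incident literal is set correctly, and a collector tying them together through the otherwise trivial recursion so that $h^\star$ is certified precisely in the ``yes'' case. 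The main obstacle is the co-quantified Condition (iv): it is a global non-existence statement about the whole reduced graph, so the gadgets must be designed so that adjoining $v$ to both sides of a matchable pair creates the forbidden configuration exactly when the instance is unsatisfiable, with no stray matching sneaking through unrelated gadgets; making this interaction provably correct---while also respecting the ZUTA constraint forced by Corollary~\ref{cor:zuta-satisfied}, which requires the gadgets to be sufficiently ``triangular''---is where the difficulty and the risk lie. Finally, to transfer hardness to extended M-identifiability one either engineers the gadget graph so that Condition (iii) of Definition~\ref{def:full-factor-criterion} fails for every candidate set $B$ (so the local BB-criterion never helps, and extended M-identifiability coincides with M-identifiability on the instance), or gives a direct polynomial reduction from M-identifiability to extended M-identifiability on a padded graph. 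It is precisely this unresolved gadget design that leads us to state the result only as a conjecture.
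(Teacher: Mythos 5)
There is a genuine gap, and it is worth being clear about its nature: the statement you are addressing is stated in the paper only as a conjecture, with no proof offered, and your proposal does not close it either. The NP-membership half is fine and matches what the paper already makes available: a certificate consisting of the ordered sequence of certifying tuples is polynomially sized, and each tuple is verifiable in polynomial time via the flow-based check of Lemma~\ref{lem:verify-tuple} (note that Condition~(iv), although a non-existence statement, is itself decidable deterministically in polynomial time by a single max-flow computation, so it poses no obstacle to membership), and similarly for the local BB-criterion via Algorithms~\ref{alg:check-full-ZUTA} and~\ref{alg:check-cond-ii}. Your ``reducing away the recursion'' observation is also consistent with the paper (it is essentially Lemma~\ref{lem:choice-of-S}): one may always take $S$ to be the full set of already-certified nodes, so hardness, if it holds, must sit in the single query over unbounded $W$, $U$ (and $B$).

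The hardness half, however, is only a reduction template, not a reduction. You name candidate source problems, describe variable/clause gadgets at the level of intent, and then explicitly concede that the gadget design and its correctness proof are unresolved. That concession is the gap: without a concrete construction one cannot verify the two directions of the reduction, and the obstacles you correctly identify are precisely where such constructions tend to fail --- Condition~(iv) quantifies over \emph{all} intersection-free matchings of $\{v\}\cup W$ and $\{v\}\cup U$ in the $S$-reduced graph, so spurious matchings through unrelated gadgets can silently destroy soundness; Corollary~\ref{cor:zuta-satisfied} forces any ``yes''-instance gadget graph to admit a ZUTA ordering, constraining the combinatorics; and for the extended variant one must additionally argue that no set $B$ ever satisfies the local BB-criterion in the gadget graph (or give a separate padding reduction), which you also leave open. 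So the proposal should be read as a reasonable research plan toward Conjecture~\ref{conj:M-NP-hard}, with a correct membership argument, but not as a proof of the conjectured NP-completeness.
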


\begin{remark}
In practice, if there are no restrictions in terms of computational time, then we can allow sets $W$ and $B$ of arbitrary size. In this case,  Algorithm~\ref{alg:check-extended-M-id} in Appendix~\ref{sec:algos}  is sound and complete for deciding extended M-identifiability of a latent-factor graph. That is, it returns ``yes'' if and only if the input graph is extended M-identifiable, see Theorem~\ref{thm:algo-F-id}. However, the run-time of the unconstrained algorithm is exponential in the number of nodes of the graph. Note that allowing sets of $W$ and $B$ of unconstrained size is equivalent to choosing $k=|\cH|$ and $\ell=|V|$ since $|W|\leq|\cH|$ and $|B|\leq|V|$ according to the definitions of the matching criterion and the local BB-criterion. Choosing smaller maximal sizes $k$ and $\ell$ can be useful in practice when attempting to verify generic sign-identifiability of large graphs, where the unconstrained version of Algorithm~\ref{alg:check-extended-M-id} does not terminate in a reasonable amount of time. With $k<|\cH|$ and $\ell < |V|$, Algorithm~\ref{alg:check-extended-M-id} is sound but not complete. That is, if the Algorithm returns ``yes'' with $k<|\cH|$ and $\ell < |V|$, then extended M-identifiability holds, which in turn implies generic sign-identifiability. In this case, during the recursive computations, every tuple certified to satisfy the matching criterion fulfills $|W|\leq k$, and every tuple certified to satisfy the local BB-criterion fulfills $|B|\leq \ell$. However, if the Algorithm returns ``no'' with $k<|\cH|$ and $\ell < |V|$, then we remain inconclusive whether the input graph is extended M-identifiable. 
\end{remark}

\begin{remark}
\citet{hosszejni2026cover} provide an efficient method to check AR-identifiability in polynomial time by computing maximal matchings in a bipartite graph. As explained in Remark~\ref{rem:comparison-matchings}, their approach is infeasible for checking our matching criterion, as it is a local version of AR-identifiability. The reason why the matching criterion is significantly more computationally intensive is as follows. Recall from Corollary~\ref{cor:ar-equivalence} that a factor analysis graph that satisfies ZUTA is AR-identifiable if and only if, for all $v \in V$, there exist two disjoint sets $W,U \subseteq V \setminus \{v\}$ with $|W|=|U|=|\cH|$ and an intersection-free matching between $W$ and $U$. The crucial difference to Condition (iii) in the matching criterion is that it is already known a priori that the intersection-free matching will involve all nodes $h \in \cH$. Checking Condition (iii) in the matching criterion can be seen as checking AR-identifiability locally for every possible subset of latent variables $H \subseteq \cH$. 
Moreover, the matching criterion needs the additional Condition (iv)  to avoid an intersection free matching between  $\{v\} \cup W$ and $\{v\} \cup U$. This is not needed for AR-identifiability because the existence of such a matching is impossible if  $|W|=|U|=|\cH|$.
\end{remark}

\begin{table*}[t]\centering 
\begin{tabular}{@{}c|rrrrr@{}}\toprule
nr. edges & ZUTA & gen. sign-id & AR-id & M-id & Ext.~M-id\\ 
\midrule
1 & 1 & 0 & 0 & 0 & 0 \\
2 & 2 & 0 & 0 & 0 & 0 \\
3 & 4 & 1 & 1 & 1 & 1 \\
4 & 7 & 1 & 1 & 1 & 1 \\
5 & 14 & 1 & 1 & 1 & 1 \\
6 & 25 & 3 & 3 & 3 & 3 \\
7 & 41 & 4 & 4 & 4 & 4 \\
8 & 56 & 6 & 6 & 6 & 6 \\
9 & 73 & 8 & 7 & 8 & 8 \\
10 & 77 & 11 & 9 & 11 & 11 \\
11 & 79 & 23 & 16 & 23 & 23 \\
12 & 67 & 31 & 23 & 29 & 29 \\
13 & 54 & 33 & 29 & 33 & 33 \\
14 & 31 & 23 & 21 & 23 & 23 \\
15 & 18 & 16 & 16 & 16 & 16 \\
16 & 8 & 8 & 8 & 8 & 8 \\
17 & 4 & 4 & 4 & 4 & 4 \\
18 & 1 & 1 & 1 & 1 & 1 \\
\midrule
Total & 562 & 174 & 150 & 172 & 172 \\
\bottomrule
\end{tabular}
\caption{Counts of unlabeled sparse factor graphs satisfying ZUTA with at most $|V|=7$ observed nodes and $|\cH|=3$ latent nodes.}
\label{table:3-7}
\end{table*}

\section{Numerical Experiments} \label{sec:experiments}

In this section, we first conduct simulations that demonstrate the performance of our criteria. Then we exemplify how our identifiability criteria are also useful in exploratory factor analysis.

\subsection{Simulations}

In our simulations, we compare different criteria for generic sign-identifiability. We treat graphs as unlabeled, that is, we count isomorphism classes of graphs. Two factor analysis graphs $G=(V \cup \cH, D)$ and $G'=(V \cup \cH, D')$ on the same set of nodes are isomorphic if there is a permutation $\pi_V$ on the observed nodes $V$ and a permutation $\pi_{\cH}$ on the latent nodes $\cH$ such that, for $h \in \cH$ and $v \in V$, the edge $h \rightarrow v \in D$ if and only if $\pi_{\cH}(h) \rightarrow \pi_{V}(v) \in D'$.

In our first experimental setup, we consider factor analysis graphs with a small number of observed and latent nodes where generic sign-identifiability can be fully solved by methods from computational algebraic geometry, as we discuss in Appendix~\ref{sec:computational-algebra}. Table~\ref{table:3-7} lists counts of all factor analysis graphs with up to $3$ latent nodes and $7$ observed nodes that satisfy ZUTA. We count how many of the graphs are AR-identifiable, M-identifiable, and extended M-identifiable. For deciding AR-identifiability we use the algorithm and code provided by \citet{hosszejni2026cover}. Note that our criteria are very effective since we only fail to certify generic sign-identifiability of two graphs. Those are exactly the graphs displayed in Figure~\ref{fig:sign-id-not-c-id}. Table~\ref{table:3-7} also illustrates that M-identifiability subsumes AR-identifiability as we have shown in Corollary~\ref{cor:subsumes-AR}. On the other hand, M-identifiability and extended M-identifiability coincide on the considered set of small graphs. This is as expected since the smallest graph, where BB-identifiability holds but M-identifiability does not, has $8$ observed nodes and $4$ latent nodes.

\begin{table}[t!]\centering 
\begin{tabular}{@{}c|rrrrr@{}}\toprule
nr. edges & total & ZUTA & AR-id & M-id & Ext.~M-id\\ 
\midrule
1 & 1 & 1 & 0 & 0 & 0  \\ 
2 & 3 & 2 & 0 & 0 & 0  \\ 
3 & 6 & 4 & 1 & 1 & 1  \\ 
4 & 16 & 8 & 1 & 1 & 1  \\ 
5 & 27 & 16 & 1 & 1 & 1  \\ 
6 & 62 & 34 & 3 & 3 & 3  \\ 
7 & 111 & 71 & 4 & 4 & 4  \\ 
8 & 225 & 146 & 9 & 9 & 9  \\ 
9 & 395 & 287 & 14 & 15 & 15  \\ 
10 & 716 & 528 & 21 & 23 & 23  \\ 
11 & 1165 & 922 & 43 & 50 & 50  \\ 
12 & 1880 & 1504 & 81 & 93 & 93  \\ 
13 & 2726 & 2273 & 134 & 167 & 167  \\ 
14 & 3829 & 3192 & 221 & 366 & 368  \\ 
15 & 4890 & 4147 & 404 & 768 & 768  \\  
16 & 5963 & 4972 & 759 & 1430 & 1435  \\ 
17 & 6599 & 5490 & 1299 & 2187 & 2204  \\ 
18 & 6937 & 5519 & 1927 & 2861 & 2913 \\
19 & 6599 & 5047 & 2385 & 3164 & 3273  \\ 
20 & 5963 & 4191 & 2509 & 3037 & 3179  \\ 
21 & 4890 & 3157 & 2231 & 2520 & 2656  \\ 
22 & 3829 & 2139 & 1705 & 1833 & 1913  \\ 
23 & 2726 & 1310 & 1128 & 1177 & 1215  \\ 
24 & 1880 & 710 & 651 & 665 & 683  \\ 
25 & 1165 & 343 & 328 & 331& 344  \\ 
26 & 716 & 153 & 151 & 151& 155  \\ 
27 & 395 & 64 & 64 & 64& 65  \\  
28 & 225 & 22 & 22 & 22 & 22  \\ 
29 & 111 & 7 & 7 & 7 & 7  \\ 
30 & 62 & 1 & 1 & 1 & 1  \\
$>$30 & 54 & 0 & 0 & 0 & 0 \\
\midrule
Total & 64166 & 46260 & 16104 & 20951 & 21568 \\
\bottomrule
\end{tabular}
\caption{Counts of unlabeled sparse factor graphs with at most $|V|=9$ observed nodes and $|\cH|=4$ latent nodes.}
\label{table:4-9}
\end{table}

Our second experimental setup considers all factor analysis graphs with up to $4$ latent nodes and $9$ observed nodes, and also includes graphs that do not satisfy ZUTA. 
Table~\ref{table:4-9} shows that the gap between AR-identifiability and M-identifiability increases. Moreover, extended M-identifiability indeed becomes effective since there are 617 graphs that are extended M-identifiable but not M-identifiable. Recall also that graphs not satisfying ZUTA might be extended M-identifiable but they are never M-identifiable nor AR-identifiable. For computational reasons, we are not able to fully solve generic sign-identifiability by methods from computational algebraic geometry for all graphs considered in Table~\ref{table:4-9}.

Next, to demonstrate that checking extended M-identifiability is feasible on larger graphs with more observed and latent nodes, we randomly generate graphs on $25$ observed nodes and $10$ latent nodes. We draw the graphs from an Erdös-Renyi model with edge probabilities $0.2$, $0.25$, $0.3$, $0.35$, $0.4$, $0.45$, where we fix the upper triangle of the adjacency matrix to zero to increase the probability of satisfying ZUTA. Moreover, we only consider graphs with at most $10$ children per latent node such that the maximal cardinality of a set $B$ satisfying the local BB-criterion is at most $10$. For each edge probability, we sample $5000$ graphs and check whether they are extended M-identifiable. When searching for sets that satisfy the matching criterion we bound the size of set $W$ by $k=4$. The number of graphs that were extended M-identifiable is reported in Table~\ref{table:large-graphs}. Recall that only graphs with at least three children per latent node can be extended M-identifiable.  For low edge probabilities the likelihood is high that this is not satisfied. As expected the fraction of extended M-identifiable graphs increases with increasing edge probabilities. However, at a certain density level we would expect that fewer graphs are extended M-identifiable, which we can already see for the edge probability $p=0.45$. \looseness=-1

\begin{table}[t]\centering 
\begin{tabular}{@{}c|rrrrrr@{}}
\toprule
Edge probability & 0.2 & 0.25 & 0.3 & 0.35 & 0.4 & 0.45\\ 
\midrule
Ext.~M-identifiable & 510 & 1784 & 3234 & 4184 & 4536 & 4498 \\
\midrule
Percentage & 10.2  & 35.7  & 64.7  & 83.7  & 90.7  & 90.0  \\
\bottomrule
\end{tabular}
\caption{Status of extended M-identifiability for 5000 randomly generated sparse factor graphs with different edge probabilities with $|W|\leq k$ for $k=4$.}
\label{table:large-graphs}
\end{table}

\subsection{Application to Exploratory Factor Analysis}

In this section, we discuss how our identifiability criteria are also useful in exploratory factor analysis. It is a desirable property in exploratory factor analysis to discover a sparse structure that yields interpretable factor loadings. If we apply threshold-based sparse estimation methods, for example, our identifiability criteria can provide guidance in choosing the threshold or tuning parameter such that identifiability is ensured.

To exemplify this in a small case study, we consider the 2018 Populism and Political Parties Expert Survey (POPPA) that measures positions and attitudes of 250 parties on key attributes related to populism, political style, party ideology, and party organization \citep{meijers2020data}. The data set is obtained from an expert survey in 28 European countries and contains $|V|=15$ measured variables. After discarding data points with missing values, $220$ samples remain. In their analyses of the data, \citet{meijers2021measuring}  also conduct an exploratory factor analysis. While they retain two latent factors in the main manuscript, they also consider five factors in the supplement. We replicate their study with five factors by first estimating the loading matrix via maximum likelihood and then applying varimax rotation \citep{kaiser1958varimax} using  the \texttt{factanal} function in the \texttt{base} library of \texttt{R} \citep{R}. We then set all loadings smaller than a predefined threshold to zero to obtain a sparse loading matrix $\Lambda$. The associated factor analysis graph includes edge $h \rightarrow v \in D$ if and only if $\lambda_{hv} \neq 0$.  Note that we do not analyze the two-factor model because generic sign-identifiability is readily ensured as long as at least one measurement does not load on both factors, making the five-factor model a more illustrative setting.

In their analysis, \citet{meijers2021measuring} use a threshold of $0.5$, which yields a very sparse graph with some factors having less than three children, and thus generic sign-identifiability does not hold; recall Remark~\ref{rem:dimension}. In contrast, we check whether extended M-identifiability holds for graphs obtained from different thresholds.  We obtain that extended M-identifiability only holds for thresholds in the interval $[0.10, 0.14]$. Note that ZUTA is not satisfied for the graphs given by all thresholds in this interval, while it is satisfied for all graphs obtained from thresholds $\geq 0.15$. In Table~\ref{table:real-data} we plot the factor loading matrix that we obtain for the threshold $0.10$.  \citet{meijers2021measuring} argue that the first latent factor represents populism since the first five measurements were designed to measure populism and load strongly on it. For more information on the measured variables we refer to \citet{meijers2021measuring}.

\begin{table*}[t]\centering 
\begin{tabular}{@{}c|rrrrr@{}}\toprule
 & Factor 1 & Factor 2 & Factor 3 & Factor 4 & Factor 5\\ 
\midrule
1      &  0.799 &  0.230 &  0.240 &  0.141 &  0.138 \\  
2      &  0.647 &  0.468 &  0.264 &  0.375 &        \\  
3      &  0.735 &  0.356 &  0.286 &  0.495 &        \\  
4      &  0.921 &        &        &  0.147 &        \\  
5      &  0.960 &        &        &        &        \\  
6      & -0.424 &  0.541 &        &        &  0.331 \\  
7      & -0.242 & -0.894 & -0.192 &        & -0.107 \\  
8      & -0.737 & -0.380 & -0.146 &  0.113 &  0.149 \\  
9      &  0.357 &  0.866 &  0.185 &  0.114 &  0.129 \\  
10     &  0.238 &  0.887 &  0.202 &        &        \\  
11     & -0.172 & -0.861 & -0.212 &        &        \\  
12     & -0.843 & -0.265 & -0.263 &        &        \\  
13     &  0.798 &  0.302 &  0.145 &  0.120 &  0.316 \\  
14     & -0.314 & -0.486 & -0.809 &        &        \\  
15     &  0.289 &  0.422 &  0.506 &        &  0.413 \\  
\bottomrule
\end{tabular}
\caption{Factor loading matrix obtained via maximum likelihood estimation and varimax rotation from the POPPA data set. Loadings with absolute value $< 0.1$ are not shown.}
\label{table:real-data}
\end{table*}

Note that our identifiability conditions can also be incorporated in Bayesian sparse factor analysis, where sparse structures are discovered by employing ``spike-and-slab'' priors on the factor loading matrix  \citep{fruehwirth2025sparse, hosszejni2026cover, conti2014bayesian}. In this case, generic sign-identifiability may be imposed as a domain restriction on the parameter space of the prior distribution. In practice, since the posterior distribution is typically obtained via MCMC sampling, this amounts to post-processing posterior draws under the unrestricted prior by discarding draws whose sparsity pattern does not permit generic sign-identifiability.

\section{Discussion}
We introduced a formal graphical framework for studying identifiability in confirmatory factor analysis when the factor loading matrix is sparse. Our main results provide graphical criteria that constitute sufficient conditions for generic sign-identifiability.  It is worth mentioning that even if a model is not extended M-identifiable, it may still be possible to prove generic sign-identifiability of certain columns of the factor loading matrix. This is the case if the recursive algorithm stops early declaring only some but not all latent nodes $h \in \cH$ to be generically sign-identifiable. 

Generic sign-identifiability is useful if an interpretation of the latent factors and their effects on the observed variables is desired. Moreover, if a model is identifiable in this sense, then its dimension equals the expected dimension obtained from counting parameters. This is crucial information for goodness-of-fit tests and model selection procedures.

Our work opens up some natural questions for further studies. 
For instance, in generalization of the setup we studied in this paper, one may also consider factor analysis models where the factors itself may be correlated. Then the observed covariance matrix takes the form
$
    \Sigma = \Lambda \Phi \Lambda^{\top} +\Omega
$
for a positive definite matrix $\Phi$ that may also be sparse. For example, consider the graph in Figure~\ref{fig:dependent-factors}, where the bidirected edge represents the nonzero correlation between the latent factors. The parameter matrices are then given by \looseness=-1
\[
    \Lambda = \begin{pmatrix}
        \lambda_{11} & 0 \\
        \lambda_{21} & 0 \\
        0 & \lambda_{32} \\
        0 & \lambda_{42}
    \end{pmatrix}, \quad
    \Phi = \begin{pmatrix}
        1 & \phi_{12} \\
        \phi_{12}&  1
    \end{pmatrix}, \quad \text{and} \quad
    \Omega = \begin{pmatrix}
        \omega_{11} & 0 & 0 & 0 \\
        0 & \omega_{22} & 0 & 0 \\
        0 & 0 & \omega_{33} & 0 \\
        0 & 0 & 0 & \omega_{44} 
    \end{pmatrix}.
\]
The observed covariance matrix takes the form
\[
\Sigma = (\sigma_{uv}) = \begin{pmatrix}
    \omega_{11} + \lambda_{11}^2 & \lambda_{11} \lambda_{21} & \lambda_{11} \phi_{12} \lambda_{32} & \lambda_{11} \phi_{12} \lambda_{42} \\
    \lambda_{11} \lambda_{21} & \omega_{22} + \lambda_{21}^2 & \lambda_{21} \phi_{12} \lambda_{32} & \lambda_{21} \phi_{12} \lambda_{42} \\
    \lambda_{11} \phi_{12} \lambda_{32} & \lambda_{21} \phi_{12} \lambda_{32} & \omega_{33} + \lambda_{32}^2 & \lambda_{32} \lambda_{42} \\
    \lambda_{11} \phi_{12} \lambda_{42} & \lambda_{21} \phi_{12} \lambda_{42} & \lambda_{32} \lambda_{42} & \omega_{44} + \lambda_{42}^2
\end{pmatrix}.
\]

\tikzset{
      every node/.style={circle, inner sep=0.3mm, minimum size=0.5cm, draw, thick, black, fill=white, text=black},
      every path/.style={thick}
}
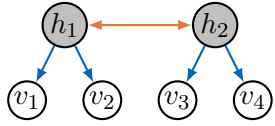
\begin{figure}[t]
\begin{center}

\begin{tikzpicture}[align=center]
    \node[fill=lightgray] (h1) at (-2,1) {$h_1$};
    \node[fill=lightgray] (h2) at (0,1) {$h_2$};
    
    \node[] (1) at (-2.5,0) {$v_1$};
    \node[] (2) at (-1.5,0) {$v_2$};
    \node[] (3) at (-0.5,0) {$v_3$};
    \node[] (4) at (0.5,0) {$v_4$};
    
    \draw[MyBlue] [-latex] (h1) edge (1);
    \draw[MyBlue] [-latex] (h1) edge (2);
    \draw[MyBlue] [-latex] (h2) edge (3);
    \draw[MyBlue] [-latex] (h2) edge (4);
    \draw[MyRed] [latex-latex] (h1) edge (h2);
\end{tikzpicture}
\end{center}
\caption{Graph encoding a nonzero correlation between the latent factors.}
\label{fig:dependent-factors}
\end{figure}

\noindent
It is already noted in \citet[p.~245]{bollen1989structural} that the parameters of this model are identifiable up to sign. Generically, the parameter  $\phi_{12}$ is recovered up to sign via the formula
\[
    \sqrt{\frac{\sigma_{23} \sigma_{14}}{\sigma_{12} \sigma_{34}}} = \sqrt{\frac{\lambda_{21} \phi_{12} \lambda_{32} \lambda_{11} \phi_{12} \lambda_{42}}{\lambda_{11} \lambda_{21} \lambda_{32} \lambda_{42}}} = |\phi_{12}|.
\]
Given $|\phi_{12}|$, we obtain generic identifiability of $|\lambda_{11}|$ via 
\[
\sqrt{\frac{\sigma_{13} \sigma_{14}}{\sigma_{34}|\phi_{12}|^2}} = \sqrt{\frac{\lambda_{11} \phi_{12} \lambda_{32} \lambda_{11} \phi_{12} \lambda_{42}}{\lambda_{32} \lambda_{42} \phi_{12}^2}} = |\lambda_{11}|,
\]
and all other parameters can be recovered similarly. Notably, since each latent node only has two children, generic sign-identifiability is impossible if the correlation $\phi_{12}$ is zero, as we have also seen in Remark~\ref{rem:dimension}. We believe that our work provides tools for future work on deriving conditions for generic sign-identifiability in models with dependent factors.

Another interesting future research direction is to study structure identifiability, a topic of interest in exploratory factor analysis. Structure identifiability refers to studying whether knowledge of the covariance matrix allows one to uniquely recover an unknown underlying graph. For model selection methods that return sparse factor loading matrices, one might then derive guarantees for recovering a most parsimonious true graph.

\section*{Acknowledgments}
The project has received funding from the European Research Council (ERC) under the European Union’s Horizon 2020 research and innovation programme (grant agreement No 883818) and from the German Federal Ministry of Education and Research and the Bavarian State Ministry for Science and the Arts.

\bibliographystyle{apalike}
\bibliography{bibliography}

\begin{thebibliography}{}

\bibitem[Aigner et~al., 1984]{aigner1984latent}
Aigner, D.~J., Hsiao, C., Kapteyn, A., and Wansbeek, T. (1984).
\newblock Latent variable models in econometrics.
\newblock In {\em Handbook of {E}conometrics}, volume~2, pages 1321--1393. Elsevier.

\bibitem[Anderson and Rubin, 1956]{anderson1956statistical}
Anderson, T.~W. and Rubin, H. (1956).
\newblock Statistical inference in factor analysis.
\newblock In {\em Proceedings of the {T}hird {B}erkeley {S}ymposium on {M}athematical {S}tatistics and {P}robability, 1954--1955, vol. {V}}, pages 111--150. Univ. California Press.

\bibitem[Aßmann et~al., 2016]{amann2016bayesian}
Aßmann, C., Boysen-Hogrefe, J., and Pape, M. (2016).
\newblock Bayesian analysis of static and dynamic factor models: An ex-post approach towards the rotation problem.
\newblock {\em Journal of Econometrics}, 192(1):190--206.

\bibitem[Bai and Li, 2012]{bai2012statistical}
Bai, J. and Li, K. (2012).
\newblock Statistical analysis of factor models of high dimension.
\newblock {\em The Annals of Statistics}, 40(1):436--465.

\bibitem[Barber et~al., 2022]{barber2022halftrek}
Barber, R.~F., Drton, M., Sturma, N., and Weihs, L. (2022).
\newblock Half-trek criterion for identifiability of latent variable models.
\newblock {\em The Annals of Statistics}, 50(6):3174--3196.

\bibitem[Beavers et~al., 2013]{beavers2013practical}
Beavers, A.~S., Lounsbury, J.~W., Richards, J.~K., Huck, S.~W., Skolits, G.~J., and Esquivel, S.~L. (2013).
\newblock Practical considerations for using exploratory factor analysis in educational research.
\newblock {\em Practical Assessment, Research, and Evaluation}, 18(6).

\bibitem[Bekker and ten Berge, 1997]{bekker1997generic}
Bekker, P.~A. and ten Berge, J. M.~F. (1997).
\newblock Generic global indentification in factor analysis.
\newblock {\em Linear Algebra and its Applications}, 264:255--263.

\bibitem[Bollen, 1989]{bollen1989structural}
Bollen, K.~A. (1989).
\newblock {\em Structural equations with latent variables}.
\newblock Wiley Series in Probability and Mathematical Statistics: Applied Probability and Statistics. John Wiley \& Sons, Inc., New York.

\bibitem[Caprara et~al., 1993]{caprara1993thebig}
Caprara, G.~V., Barbaranelli, C., Borgogni, L., and Perugini, M. (1993).
\newblock The “big five questionnaire”: A new questionnaire to assess the five factor model.
\newblock {\em Personality and Individual Differences}, 15(3):281--288.

\bibitem[Carroll, 1953]{carroll1953analytical}
Carroll, J.~B. (1953).
\newblock An analytical solution for approximating simple structure in factor analysis.
\newblock {\em Psychometrika}, 18(1):23--38.

\bibitem[Conti et~al., 2014]{conti2014bayesian}
Conti, G., Fr\"{u}hwirth-Schnatter, S., Heckman, J.~J., and Piatek, R. (2014).
\newblock Bayesian exploratory factor analysis.
\newblock {\em Journal of Econometrics}, 183(1):31–57.

\bibitem[Cormen et~al., 2009]{cormen2009introduction}
Cormen, T.~H., Leiserson, C.~E., Rivest, R.~L., and Stein, C. (2009).
\newblock {\em Introduction to algorithms}.
\newblock MIT Press, third edition.

\bibitem[Cox et~al., 2007]{cox2007ideals}
Cox, D., Little, J., and O'Shea, D. (2007).
\newblock {\em Ideals, varieties, and algorithms}.
\newblock Undergraduate Texts in Mathematics. Springer, New York, third edition.

\bibitem[Cox, 2024]{cox2024weak}
Cox, G.~F. (2024).
\newblock Weak identification in low-dimensional factor models with one or two factors.
\newblock {\em The Review of Economics and Statistics}, pages 1--45.

\bibitem[de~Oliveira~Santos et~al., 2019]{santos2019principal}
de~Oliveira~Santos, R., Gorgulho, B.~M., de~Castro, M.~A., Fisberg, R.~M., Marchioni, D.~M., and Baltar, V.~T. (2019).
\newblock Principal component analysis and factor analysis: differences and similarities in nutritional epidemiology application.
\newblock {\em Revista Brasileira de Epidemiologia}, 22.

\bibitem[Decker et~al., 2024]{Singular}
Decker, W., Greuel, G.-M., Pfister, G., and Sch\"onemann, H. (2024).
\newblock {\sc Singular}, a computer algebra system for polynomial computations ({V}ersion 4.4.0).

\bibitem[Drton et~al., 2025]{drton2025algebraic}
Drton, M., Grosdos, A., Portakal, I., and Sturma, N. (2025).
\newblock Algebraic sparse factor analysis.
\newblock {\em SIAM Journal on Applied Algebra and Geometry}, 9(2):279--309.

\bibitem[Drton et~al., 2007]{drton2007algebraic}
Drton, M., Sturmfels, B., and Sullivant, S. (2007).
\newblock Algebraic factor analysis: tetrads, pentads and beyond.
\newblock {\em Probability Theory and Related Fields}, 138(3-4):463--493.

\bibitem[Fan et~al., 2008]{fan2008high}
Fan, J., Fan, Y., and Lv, J. (2008).
\newblock High dimensional covariance matrix estimation using a factor model.
\newblock {\em Journal of Econometrics}, 147(1):186--197.

\bibitem[Ford et~al., 1986]{ford1986application}
Ford, J.~K., MacCallum, R.~C., and Tait, M. (1986).
\newblock The application of exploratory factor analysis in applied psychology: A critical review and analysis.
\newblock {\em Personnel Psychology}, 39(2):291--314.

\bibitem[Ford and Fulkerson, 1962]{ford1962flows}
Ford, Jr., L.~R. and Fulkerson, D.~R. (1962).
\newblock {\em Flows in networks}.
\newblock Princeton University Press.

\bibitem[Foygel et~al., 2012]{foygel2012halftrek}
Foygel, R., Draisma, J., and Drton, M. (2012).
\newblock Half-trek criterion for generic identifiability of linear structural equation models.
\newblock {\em The Annals of Statistics}, 40(3):1682--1713.

\bibitem[Fr\"uhwirth-Schnatter et~al., 2025]{fruehwirth2025sparse}
Fr\"uhwirth-Schnatter, S., Hosszejni, D., and Lopes, H.~F. (2025).
\newblock Sparse {B}ayesian factor analysis when the number of factors is unknown (with discussion).
\newblock {\em Bayesian Analysis}, 20(1):213--344.
\newblock Includes comments and discussions by 47 discussants and a rejoinder by the authors.

\bibitem[Garcia-Puente et~al., 2010]{garcia2010identifying}
Garcia-Puente, L.~D., Spielvogel, S., and Sullivant, S. (2010).
\newblock Identifying causal effects with computer algebra.
\newblock In {\em Proceedings of the 26th Conference on Uncertainty in Artificial Intelligence (UAI)}. AUAI Press.

\bibitem[Gessel and Viennot, 1985]{gessel1985binomial}
Gessel, I. and Viennot, G. (1985).
\newblock Binomial determinants, paths, and hook length formulae.
\newblock {\em Advances in Mathematics}, 58(3):300--321.

\bibitem[Goretzko, 2023]{goretzko2023regularized}
Goretzko, D. (2023).
\newblock Regularized exploratory factor analysis as an alternative to factor rotation.
\newblock {\em European Journal of Psychological Assessment}.

\bibitem[Goretzko et~al., 2023]{goretzko2023evaluating}
Goretzko, D., Siemund, K., and Sterner, P. (2023).
\newblock Evaluating model fit of measurement models in confirmatory factor analysis.
\newblock {\em Educational and Psychological Measurement}, 84(1):123–144.

\bibitem[Grayson and Stillman, 2024]{Macaulay}
Grayson, D.~R. and Stillman, M.~E. (2024).
\newblock Macaulay2, a software system for research in algebraic geometry ({V}ersion 1.24.11).

\bibitem[Harman, 1976]{harman1976modern}
Harman, H.~H. (1976).
\newblock {\em Modern factor analysis}.
\newblock University of Chicago Press, Chicago, Ill.-London, revised edition.

\bibitem[Hirose and Konishi, 2012]{hirose2012variable}
Hirose, K. and Konishi, S. (2012).
\newblock Variable selection via the weighted group lasso for factor analysis models.
\newblock {\em Canadian Journal of Statistics}, 40(2):345–361.

\bibitem[Horn, 1965]{horn1965arationale}
Horn, J.~L. (1965).
\newblock A rationale and test for the number of factors in factor analysis.
\newblock {\em Psychometrika}, 30(2):179--185.

\bibitem[Hosszejni and Fr\"uhwirth-Schnatter, 2026]{hosszejni2026cover}
Hosszejni, D. and Fr\"uhwirth-Schnatter, S. (2026).
\newblock Cover it up! {B}ipartite graphs uncover identifiability in sparse factor analysis.
\newblock {\em Journal of Multivariate Analysis}, 211:Paper No. 105536.

\bibitem[Kaiser, 1958]{kaiser1958varimax}
Kaiser, H.~F. (1958).
\newblock The varimax criterion for analytic rotation in factor analysis.
\newblock {\em Psychometrika}, 23(3):187--200.

\bibitem[Kim and Zhou, 2023]{kim2023structure}
Kim, D. and Zhou, Q. (2023).
\newblock Structure learning of latent factors via clique search on correlation thresholded graphs.
\newblock In {\em Proceedings of the 40th International Conference on Machine Learning}, volume 202 of {\em Proceedings of Machine Learning Research}, pages 16978--16996. PMLR.

\bibitem[Lan et~al., 2014]{lan2014Sparse}
Lan, A.~S., Waters, A.~E., Studer, C., and Baraniuk, R.~G. (2014).
\newblock Sparse factor analysis for learning and content analytics.
\newblock {\em Journal of Machine Learning Research}, 15:1959--2008.

\bibitem[Lauritzen, 1996]{lauritzen1996graphical}
Lauritzen, S.~L. (1996).
\newblock {\em Graphical models}, volume~17 of {\em Oxford Statistical Science Series}.
\newblock Clarendon Press.

\bibitem[Lee and Seregina, 2023]{lee2023optimal}
Lee, T.-H. and Seregina, E. (2023).
\newblock Optimal portfolio using factor graphical lasso.
\newblock {\em Journal of Financial Econometrics}, 22(3):670--695.

\bibitem[Lindstr\"{o}m, 1973]{lindstrom1973on}
Lindstr\"{o}m, B. (1973).
\newblock On the vector representations of induced matroids.
\newblock {\em The Bulletin of the London Mathematical Society}, 5:85--90.

\bibitem[Liu et~al., 2023]{liu2023rotation}
Liu, X., Wallin, G., Chen, Y., and Moustaki, I. (2023).
\newblock Rotation to sparse loadings using ${L}_p$-losses and related inference problems.
\newblock {\em Psychometrika}, 88(2):527--553.

\bibitem[Long, 1983]{long1983confirmatory}
Long, J.~S. (1983).
\newblock {\em Confirmatory factor analysis: A preface to {LISREL}}.
\newblock Sage University Papers Series. Sage Publications, Inc.

\bibitem[Maathuis et~al., 2019]{maathuis2019handbook}
Maathuis, M., Drton, M., Lauritzen, S., and Wainwright, M., editors (2019).
\newblock {\em Handbook of graphical models}.
\newblock Chapman \& Hall/CRC Handbooks of Modern Statistical Methods. CRC Press, Boca Raton, FL.

\bibitem[Mart\'inez et~al., 1998]{martines1998invited}
Mart\'inez, M.~E., Marshall, J.~R., and Sechrest, L. (1998).
\newblock Invited commentary: Factor analysis and the search for objectivity.
\newblock {\em American Journal of Epidemiology}, 148(1):17--19.

\bibitem[Mayr, 1997]{mayr1997somcecomplexity}
Mayr, E.~W. (1997).
\newblock Some complexity results for polynomial ideals.
\newblock {\em Journal of Complexity}, 13(3):303--325.

\bibitem[Meijers and Zaslove, 2020]{meijers2020data}
Meijers, M. and Zaslove, A. (2020).
\newblock {Populism and Political Parties Expert Survey 2018 (POPPA)}.

\bibitem[Meijers and Zaslove, 2021]{meijers2021measuring}
Meijers, M.~J. and Zaslove, A. (2021).
\newblock Measuring populism in political parties: Appraisal of a new approach.
\newblock {\em Comparative Political Studies}, 54(2):372--407.

\bibitem[Ning and Georgiou, 2011]{ning2011sparse}
Ning, L. and Georgiou, T.~T. (2011).
\newblock Sparse factor analysis via likelihood and $\ell_1$-regularization.
\newblock In {\em 50th IEEE Conference on Decision and Control and European Control Conference}, pages 5188--5192. IEEE.

\bibitem[Okamoto, 1973]{okamoto1973distinctness}
Okamoto, M. (1973).
\newblock Distinctness of the eigenvalues of a quadratic form in a multivariate sample.
\newblock {\em The Annals of Statistics}, 1:763--765.

\bibitem[Pearl, 2000]{pearl2000causality}
Pearl, J. (2000).
\newblock {\em Causality}.
\newblock Cambridge University Press, Cambridge.

\bibitem[Pedregosa et~al., 2011]{sklearn}
Pedregosa, F., Varoquaux, G., Gramfort, A., Michel, V., Thirion, B., Grisel, O., Blondel, M., Prettenhofer, P., Weiss, R., Dubourg, V., Vanderplas, J., Passos, A., Cournapeau, D., Brucher, M., Perrot, M., and Duchesnay, E. (2011).
\newblock Scikit-learn: Machine learning in {P}ython.
\newblock {\em Journal of Machine Learning Research}, 12:2825--2830.

\bibitem[{R Core Team}, 2025]{R}
{R Core Team} (2025).
\newblock {\em R: A Language and Environment for Statistical Computing}.
\newblock R Foundation for Statistical Computing, Vienna, Austria.

\bibitem[Reilly and O'Brien, 1996]{reilly1996identification}
Reilly, T. and O'Brien, R.~M. (1996).
\newblock Identification of confirmatory factor analysis models of arbitrary complexity: The side-by-side rule.
\newblock {\em Sociological Methods \& Research}, 24(4):473--491.

\bibitem[Reise et~al., 2000]{reise2000factor}
Reise, S.~P., Waller, N.~G., and Comrey, A.~L. (2000).
\newblock Factor analysis and scale revision.
\newblock {\em Psychological Assessment}, 12(3):287--297.

\bibitem[Ročková and George, 2016]{rockova2016fastbayesian}
Ročková, V. and George, E.~I. (2016).
\newblock Fast bayesian factor analysis via automatic rotations to sparsity.
\newblock {\em Journal of the American Statistical Association}, 111(516):1608--1622.

\bibitem[Scharf and Nestler, 2019]{scharf2019regularization}
Scharf, F. and Nestler, S. (2019).
\newblock Should regularization replace simple structure rotation in exploratory factor analysis?
\newblock {\em Structural Equation Modeling: A Multidisciplinary Journal}, 26(4):576--590.

\bibitem[Schreiber et~al., 2006]{schreiber2006reporting}
Schreiber, J.~B., Nora, A., Stage, F.~K., Barlow, E.~A., and King, J. (2006).
\newblock Reporting structural equation modeling and confirmatory factor analysis results: A review.
\newblock {\em The Journal of Educational Research}, 99(6):323--338.

\bibitem[Spirtes et~al., 2000]{spirtes:2000}
Spirtes, P., Glymour, C., and Scheines, R. (2000).
\newblock {\em Causation, prediction, and search}.
\newblock Adaptive Computation and Machine Learning. MIT Press, Cambridge, MA, second edition.

\bibitem[Stein, 2024]{SageMath}
Stein, W. A. e.~a. (2024).
\newblock {S}age {M}athematics {S}oftware ({V}ersion 10.4).

\bibitem[Sullivant et~al., 2010]{sullivant2010trek}
Sullivant, S., Talaska, K., and Draisma, J. (2010).
\newblock Trek separation for {G}aussian graphical models.
\newblock {\em The Annals of Statistics}, 38(3):1665--1685.

\bibitem[Trendafilov et~al., 2017]{trendafilov2017sparse}
Trendafilov, N.~T., Fontanella, S., and Adachi, K. (2017).
\newblock Sparse exploratory factor analysis.
\newblock {\em Psychometrika}, 82(3):778--794.

\bibitem[Whitney, 1957]{whitney1957elementary}
Whitney, H. (1957).
\newblock Elementary structure of real algebraic varieties.
\newblock {\em Annals of Mathematics. Second Series}, 66:545--556.

\bibitem[Williams, 2020]{williams2020identification}
Williams, B. (2020).
\newblock Identification of the linear factor model.
\newblock {\em Econometric Reviews}, 39(1):92--109.

\bibitem[Wilson and Worcester, 1939]{wilson1939theresolution}
Wilson, E.~B. and Worcester, J. (1939).
\newblock The resolution of six tests into three general factors.
\newblock {\em Proceedings of the National Academy of Sciences}, 25(2):73--77.

\bibitem[Ximénez, 2006]{ximenez2006montecarlo}
Ximénez, C. (2006).
\newblock A {M}onte {C}arlo study of recovery of weak factor loadings in confirmatory factor analysis.
\newblock {\em Structural Equation Modeling: A Multidisciplinary Journal}, 13(4):587--614.

\bibitem[Zhao et~al., 2016]{zhao2016bayesian}
Zhao, S., Gao, C., Mukherjee, S., and Engelhardt, B.~E. (2016).
\newblock Bayesian group factor analysis with structured sparsity.
\newblock {\em Journal of Machine Learning Research}, 17(196):1--47.

\end{thebibliography}

\newpage
\begin{appendix}
\begin{center}
{ \LARGE\bf \noindent Appendix} 
\end{center}

 \section{Additional Lemmas for Full Factor Models} \label{sec:full-factor}

We relate the existing literature on identifiability of full factor analysis models  to our setup that also allows for sparse factor models. To formally define generic identifiability of the diagonal matrix $\Omega$, we use the same terminology as \citet{bekker1997generic}. Let $\pi_{\diag}$ be the projection of the parameters space $\Theta_G$ to the parameters corresponding to the diagonal matrix, that is,
\begin{align} \label{eq:pi-diag}
    \pi_{\diag}: \Theta_G &\longrightarrow \mathbb{R}^{|V|}_{>0} \\
    (\Omega, \Lambda) &\longmapsto \Omega. \nonumber
\end{align}

\begin{definition} \label{def:global-identifiability}
A factor analysis graph $G=(V \cup \cH, D)$ is said to be \emph{generically globally identifiable} if $\pi_{\diag}(\mathcal{F}_G(\Omega, \Lambda)) = \{\Omega\}$ for almost all $(\Omega, \Lambda) \in \Theta_G$.
\end{definition}

We have the following lemma.

\begin{lemma} \label{lem:global-id-follows-sign-id}
Let $G=(V \cup \mathcal{H}, D)$ be a factor analysis graph such that ZUTA is satisfied. Then $G$ is generically globally identifiable if and only if $G$ is  generically sign-identifiable.
\end{lemma}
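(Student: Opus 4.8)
The statement asserts that, for a factor analysis graph $G$ satisfying ZUTA, generic global identifiability of $\Omega$ is equivalent to generic sign-identifiability of $G$. One direction is immediate: if $G$ is generically sign-identifiable, then by Definition~\ref{def:identifiability} every $(\widetilde{\Omega},\widetilde{\Lambda})$ in the fiber $\mathcal{F}_G(\Omega,\Lambda)$ satisfies $\widetilde{\Omega}=\Omega$, so in particular $\pi_{\diag}(\mathcal{F}_G(\Omega,\Lambda)) = \{\Omega\}$ and $G$ is generically globally identifiable. The content is in the converse, and the plan is to leverage the ZUTA assumption to turn recovery of $\Omega$ into recovery of $\Lambda$ up to column sign.

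\textbf{Key steps for the converse.} Suppose $G$ is generically globally identifiable. Fix a ZUTA-ordering $\prec$ on $\cH$ and let $P$ be the associated permutation of rows and columns so that $\Lambda P$ (after the row permutation) has a zero upper triangle; crucially this permutation depends only on the graph $G$, not on the particular parameter value, and it sends the support $D$ to a fixed ``staircase'' support. Now take a generic $(\Omega,\Lambda)\in\Theta_G$ and any $(\widetilde{\Omega},\widetilde{\Lambda})\in\mathcal{F}_G(\Omega,\Lambda)$. Global identifiability gives $\widetilde{\Omega}=\Omega$, hence
\[
\widetilde{\Lambda}\widetilde{\Lambda}^{\top} \;=\; \tau_G(\widetilde{\Omega},\widetilde{\Lambda})-\widetilde{\Omega} \;=\; \tau_G(\Omega,\Lambda)-\Omega \;=\; \Lambda\Lambda^{\top}.
\]
Both $\widetilde{\Lambda}$ and $\Lambda$ lie in $\mathbb{R}^D$, so after applying the graph-determined permutation both become zero-upper-triangular matrices with the same pattern of forced zeros, and they give the same Gram matrix. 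Then one invokes the essential uniqueness of the Cholesky-type factorization that is already recorded in the paragraph following Definition~\ref{def:ZUTA}: a zero-upper-triangular square root of a positive (semi)definite matrix is unique up to multiplying columns by $\pm1$. Care is needed because $\Lambda$ is rectangular ($|V|\times|\cH|$) rather than square, so I would phrase this as: the first $|\cH|$ rows (in the permuted order) of $\Lambda$ and of $\widetilde{\Lambda}$ form $|\cH|\times|\cH|$ lower-triangular matrices with the same product $M$; generically $M$ is invertible with nonzero diagonal on its triangular factor, so uniqueness up to column signs $\Psi\in\{\pm1\}^{|\cH|\times|\cH|}$ holds for these top blocks, i.e. $\widetilde{\Lambda}_{\text{top}} = \Lambda_{\text{top}}\Psi$; then the remaining rows are determined by $\widetilde{\Lambda}_{\text{rest}}\,\widetilde{\Lambda}_{\text{top}}^{\top} = \Lambda_{\text{rest}}\Lambda_{\text{top}}^{\top}$ and invertibility of the top block, forcing $\widetilde{\Lambda}_{\text{rest}} = \Lambda_{\text{rest}}\Psi$ as well. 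Undoing the permutation yields $\widetilde{\Lambda}=\Lambda\Psi$ with $\Psi$ diagonal $\pm1$, which is exactly the conclusion of Definition~\ref{def:identifiability}.

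\textbf{Genericity bookkeeping and the main obstacle.} Throughout I would need to exclude a measure-zero set of parameters: those where the relevant $|\cH|\times|\cH|$ top block is singular, where some diagonal entry of the triangular factor vanishes, or where the global-identifiability conclusion fails. Each of these exceptional sets is the vanishing locus of a nonzero polynomial in the entries of $(\Omega,\Lambda)$ — the nonvanishing being witnessed by a single generic choice, or by the determinantal conditions guaranteed under ZUTA — so their union still has Lebesgue measure zero, and the argument goes through for almost all $(\Omega,\Lambda)$. The main obstacle I anticipate is making the ``Cholesky uniqueness for rectangular $\Lambda$'' step fully rigorous: one must be careful that the permutation realizing ZUTA is applied consistently to rows, columns, and the support $D$, and that the triangular top block is generically invertible with nonzero pivots — this is where the precise form of the ZUTA ordering (that $\ch(h)\not\subseteq\bigcup_{\ell\succ h}\ch(\ell)$, giving a ``private'' child for each latent node) does the real work, guaranteeing that each diagonal entry of the triangular factor is a nonzero polynomial and hence generically nonzero. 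Everything else is routine linear algebra plus standard measure-zero arguments.
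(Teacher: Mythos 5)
Your proposal is correct. The easy direction and the reduction of the converse to $\Lambda\Lambda^{\top}=\widetilde{\Lambda}\widetilde{\Lambda}^{\top}$ (via $\widetilde{\Omega}=\Omega$ from global identifiability) match the paper, and both arguments ultimately rest on the same structural consequence of ZUTA: one can select, for each latent node, a ``private'' child so that the corresponding $|\cH|\times|\cH|$ submatrix of any $\Lambda\in\mathbb{R}^D$ is lower triangular with free diagonal entries, hence generically invertible. Where you diverge is in how this submatrix is used. The paper first multiplies $\Lambda\Lambda^{\top}=\widetilde{\Lambda}\widetilde{\Lambda}^{\top}$ by Moore--Penrose pseudoinverses to write $\widetilde{\Lambda}=\Lambda Q$ with $Q$ invertible, argues that $Q$ must be orthogonal, and only then uses the triangular submatrix to force $Q=\Lambda_{U,\cH}^{-1}(\Lambda_{U,\cH}Q)$ to be lower triangular, hence diagonal with entries $\pm 1$. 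You instead apply Cholesky-type uniqueness directly to the top block, obtaining $\widetilde{\Lambda}_{\mathrm{top}}=\Lambda_{\mathrm{top}}\Psi$, and then recover the remaining rows by solving $\widetilde{\Lambda}_{\mathrm{rest}}\widetilde{\Lambda}_{\mathrm{top}}^{\top}=\Lambda_{\mathrm{rest}}\Lambda_{\mathrm{top}}^{\top}$ using invertibility of the top block. Your block-wise route is slightly more elementary: it avoids the pseudoinverse step and the attendant full-column-rank bookkeeping for both $\Lambda$ and $\widetilde{\Lambda}$, at the cost of having to state Cholesky uniqueness up to column signs (which the paper also uses informally after Definition~\ref{def:ZUTA}); the paper's route buys an explicit factorization $\widetilde{\Lambda}=\Lambda Q$ that mirrors the rotational-invariance viewpoint used throughout. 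Your genericity bookkeeping (nonvanishing of the triangular determinant as a product of free diagonal parameters, finite unions of null sets) is exactly what is needed, so there is no gap.
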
 
\begin{proof}
If $G$ is generically sign-identifiable, then by definition it is generically globally identifiable. For the other direction, let $G$ be generically globally identifiable.  Fix a generically chosen parameter tuple $(\Omega, \Lambda) \in \Theta_G$ and consider a tuple $(\widetilde{\Omega}, \widetilde{\Lambda}) \in \mathcal{F}_G(\Omega, \Lambda)$ in the fiber of $(\Omega, \Lambda)$. It holds that $\Lambda \Lambda^{\top} + \Omega = \widetilde{\Lambda} \widetilde{\Lambda}^{\top} + \widetilde{\Omega}$. Since $G$ is generically globally identifiable, it follows that $\Omega = \widetilde{\Omega}$, and therefore  $\Lambda \Lambda^{\top}= \widetilde{\Lambda} \widetilde{\Lambda}^{\top}$. We assume w.l.o.g.~that each latent node has at least one child since otherwise the node is trivially generically-sign identifiable. Since ZUTA is satisfied it holds that $\Lambda$ generically has full column rank. By multiplying from the right with the Moore-Penrose pseudoinverse of $\widetilde{\Lambda}^{\top}$, it must hold that $\widetilde{\Lambda} = \Lambda Q$ for some invertible matrix $Q \in \mathbb{R}^{|\cH| \times |\cH|}$. Moreover, this matrix $Q$ has to be orthogonal, since otherwise  the equality $\Lambda \Lambda^{\top} = \widetilde{\Lambda} \widetilde{\Lambda}^{\top} = \Lambda Q Q^{\top} \Lambda^{\top}$ does not hold; to see this multiply with the pseudoinverses of $\Lambda$ and $\Lambda^{\top}$ again.

For finishing the proof, we claim that $\Lambda Q \in \mathbb{R}^D$ for an orthogonal matrix $Q$ if and only if $Q$ is diagonal with entries in  $\{\pm 1\}$. Assume w.l.o.g.~that the latent nodes are ordered by the ZUTA-ordering. Then, there also exists an ordering of the observed nodes such that $\Lambda$ contains a lower triangular submatrix $\Lambda_{U,\cH}$ with no zeros on the diagonal for a subset $U \subseteq V$ with $|U|=|\cH|$. We can choose the subset $U$ such that, for generically chosen $\Lambda$, the lower triangular matrix $\Lambda_{U,\cH}$ is nonsingular. Observe that $\Lambda Q \in \mathbb{R}^D$ implies that the matrix $T := \Lambda_{U,\cH} Q$ also has to be lower triangular. Hence, $Q = \Lambda_{U,\cH}^{-1}T$ is also lower triangular. But then $Q^{-1}=Q^{\top}$  has to be lower triangular as well, which implies that $Q$ is diagonal. Finally, note that $I=Q^{\top}Q$ which implies that the squared diagonal entries of $Q$ are all equal to one.
\end{proof}

Existing literature mainly focused on full factor models, which correspond to the following graphs.

\begin{definition} \label{def:full-factor-graph}
    A \emph{full} factor analysis graph is a factor analysis graph $G=(V \cup \cH, D)$ with $D=\cH \times V$.
\end{definition}

Now, we show that every full factor analysis graph is in a one-to-one relation to the full-ZUTA graph on the same set of nodes.

\begin{lemma}\label{lem:full-factor-models}
    Let $G=(V \cup \cH, \cH \times V)$ be a full factor analysis graph and consider the corresponding full-ZUTA graph $G'=(V \cup \cH, D)$ on the same set of nodes. Then $F(G)=F(G')$ and, moreover, $G$ is generically globally identifiable if and only if $G'$ is generically sign-identifiable.
\end{lemma}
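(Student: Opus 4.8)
The plan is to reduce the statement to the ZUTA case already handled in Lemma~\ref{lem:global-id-follows-sign-id}. First, for the equality $F(G)=F(G')$: since $D'\subseteq\cH\times V$ we have $\mathbb{R}^{D'}\subseteq\mathbb{R}^{\cH\times V}$, hence $F(G')\subseteq F(G)$ trivially. For the reverse inclusion, take any $\Lambda\in\mathbb{R}^{\cH\times V}$ and apply the $QR$ decomposition to $\Lambda^\top$ (equivalently, an $LQ$/Cholesky-type reduction of $\Lambda$): there is an orthogonal $Q$ with $\Lambda Q$ carrying the zero pattern of the full-ZUTA graph, i.e.\ $\Lambda Q\in\mathbb{R}^{D'}$, and $(\Lambda Q)(\Lambda Q)^\top=\Lambda\Lambda^\top$. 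This is exactly the mechanism by which ZUTA removes rotational indeterminacy; cf.\ the discussion after Definition~\ref{def:ZUTA}. Consequently $\{\Lambda\Lambda^\top:\Lambda\in\mathbb{R}^{\cH\times V}\}=\{\Lambda'\Lambda'^\top:\Lambda'\in\mathbb{R}^{D'}\}$ (both equal the cone of positive semidefinite matrices of rank at most $|\cH|$), so $F(G)=F(G')$ and, moreover, every $\Sigma$ admits exactly the same set of valid diagonal matrices $\Omega$ in the two models. Since $G'$ satisfies ZUTA, Lemma~\ref{lem:global-id-follows-sign-id} gives that $G'$ is generically sign-identifiable if and only if it is generically globally identifiable; it therefore remains to show that $G$ is generically globally identifiable if and only if $G'$ is.

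The subtle point is that global identifiability of $G$ is a statement about almost all parameters in $\Theta_G=\mathbb{R}^{|V|}_{>0}\times\mathbb{R}^{\cH\times V}$, whereas for $G'$ it concerns the much lower-dimensional $\Theta_{G'}=\mathbb{R}^{|V|}_{>0}\times\mathbb{R}^{D'}$. To transfer genericity I would introduce the canonical reduction map. On the co-null open subsets $\Theta_G^\circ$ and $\Theta_{G'}^\circ$ where $\Lambda$ has full column rank $|\cH|$ (co-null because $|V|\geq|\cH|$ for a full-ZUTA graph), define $\rho(\Omega,\Lambda)=(\Omega,\Lambda Q_\Lambda)$, where $Q_\Lambda$ is the (smoothly varying, positive-pivot) orthogonal factor from the $QR$ decomposition of $\Lambda^\top$, so that $\Lambda Q_\Lambda\in\mathbb{R}^{D'}$. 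Then $\tau_G=\tau_{G'}\circ\rho$, the map $\rho$ is onto a co-null subset of $\Theta_{G'}^\circ$, and --- using that $\mathbb{R}^{D'}$ meets each $O(|\cH|)$-orbit of a full-rank matrix in exactly the $2^{|\cH|}$ column-sign copies (uniqueness of the Cholesky/ZUTA form) --- the subspace $\mathbb{R}^{D'}$ is a local slice for the $O(|\cH|)$-action. Hence $\rho$ is a submersion, so it sends sets of positive Lebesgue measure to sets of positive measure and pulls back such sets to sets of positive measure.

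It then remains to check compatibility of the ``bad'' sets $E_G=\{(\Omega,\Lambda)\in\Theta_G:\pi_{\diag}(\mathcal F_G(\Omega,\Lambda))\neq\{\Omega\}\}$ and its analogue $E_{G'}$. If $(\Omega,\Lambda)\in E_G$, any witness $(\widetilde\Omega,\widetilde\Lambda)$ with $\widetilde\Omega\neq\Omega$ and $\widetilde\Omega+\widetilde\Lambda\widetilde\Lambda^\top=\tau_G(\Omega,\Lambda)$ can be $QR$-reduced to a witness with $\widetilde\Lambda\in\mathbb{R}^{D'}$ (using the equality of realizable low-rank parts), so $\rho(\Omega,\Lambda)\in E_{G'}$; conversely, any witness for $\rho(\Omega,\Lambda)\in E_{G'}$ lies in $\mathbb{R}^{D'}\subseteq\mathbb{R}^{\cH\times V}$ and is a witness for $(\Omega,\Lambda)\in E_G$. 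Thus $\rho(E_G\cap\Theta_G^\circ)\subseteq E_{G'}$ and $\rho^{-1}(E_{G'})\cap\Theta_G^\circ\subseteq E_G$, and since $\rho$ is a submersion, $E_G$ has measure zero if and only if $E_{G'}$ does. Together with the first paragraph this yields $G$ generically globally identifiable $\iff$ $G'$ generically globally identifiable $\iff$ $G'$ generically sign-identifiable.

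The main obstacle is precisely this transfer of genericity across parameter spaces of different dimension; everything hinges on $\rho$ being a submersion, which in turn rests on the Cholesky-uniqueness slice property of $\mathbb{R}^{D'}$. An alternative route that sidesteps the submersion is a semialgebraic dimension count: writing $\mathcal N\subseteq F(G)=F(G')$ for the (semialgebraic) locus of covariance matrices admitting two distinct valid diagonals, one has $E_G=\tau_G^{-1}(\mathcal N)$ and $E_{G'}=\tau_{G'}^{-1}(\mathcal N)$, and since the fibers of $\tau_G$ and $\tau_{G'}$ never exceed their generic fiber dimension, the conditions $\dim E_G<\dim\Theta_G$ and $\dim E_{G'}<\dim\Theta_{G'}$ are both equivalent to $\dim\mathcal N<\dim F(G)$.
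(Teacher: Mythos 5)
Your proof is correct and follows essentially the paper's route: the QR-reduction argument for $F(G)=F(G')$ is exactly the one the paper gives, and the identifiability equivalence is obtained, as in the paper, by reducing to Lemma~\ref{lem:global-id-follows-sign-id} after establishing that $G$ is generically globally identifiable if and only if $G'$ is --- a step the paper records only as an ``observe that,'' and which your reduction map $\rho$ (or, alternatively, your semialgebraic dimension count) legitimately fills in. The only blemish is the claim that $\rho$ is onto a co-null subset of $\Theta_{G'}^\circ$: with the positive-pivot convention its image is only the positive-pivot slice of the ZUTA matrices, but since the bad set $E_{G'}$ is invariant under column-sign flips (the $2^{|\cH|}$ copies you already invoke), nullity on that slice transfers to all of $\Theta_{G'}$, so the argument stands as written.
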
 
\begin{proof}
    The inclusion $F(G') \subseteq F(G)$ is trivial since the edges of $G'$ are a subset of the edges of $G$. For the other inclusion, consider a point $\Sigma \in F(G)$ in the full factor analysis model. Then $\Sigma = \Lambda \Lambda^{\top} + \Omega$  for some parameters $(\Omega, \Lambda) \in \Theta_G$.  As in the QR-decomposition, there is an orthogonal matrix $Q$ such that $\Lambda Q$ has a zero upper triangle, that is, $\Lambda Q \in \mathbb{R}^{D}$ where $D$ is the edge set of the full-ZUTA graph $G'$. Since $\Lambda Q Q^{\top} \Lambda^{\top} + \Omega = \Lambda \Lambda^{\top} + \Omega = \Sigma$, we conclude that $\Sigma \in F(G')$.

    For the second statement, observe that $G$ is generically globally identifiable if and only if $G'$ is generically globally identifiable. Since $G'$ satisfies ZUTA, it follows by Lemma~\ref{lem:global-id-follows-sign-id} that $G'$ is generically globally identifiable if and only if $G'$ generically sign-identifiable.
\end{proof}

\section{Algorithms} \label{sec:algos}
In this section, we propose efficient algorithms for deciding whether a sparse factor analysis graph is M-identifiable or extended M-identifiable.

\subsection{Deciding M-identifiability}
Let $G=(V \cup \cH, D)$ be a factor analysis graph and fix a node $h \in \cH$. When recursively checking M-identifiability, the next lemma verifies that we can always take the set $S \subseteq \cH \setminus \{h\}$ to be the set of \emph{all} previously solved nodes, that is, all latent nodes that are already known to be generically sign-identifiable.

\begin{lemma} \label{lem:choice-of-S}
Fix a latent node $h \in \cH$ in a factor analysis graph $G=(V \cup \mathcal{H}, D)$.  Let $\widetilde{S} \subseteq \cH \setminus \{h\}$, and suppose that $(v, W, U, S) \in V \times 2^{V} \times 2^{V} \times 2^{ \cH \setminus \{h\}}$ with $S \subseteq \widetilde{S}$ satisfies the matching criterion with respect to $h$. Then there are $\widetilde{W},\widetilde{U} \subseteq V$ such that the tuple $(v, \widetilde{W}, \widetilde{U}, \widetilde{S})$ also satisfies the matching criterion with respect to $h$.
\end{lemma}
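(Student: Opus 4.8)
The plan is to obtain $\widetilde W$ and $\widetilde U$ as the endpoint sets of a sub-matching of a matching that witnesses Condition~(iii) for the original tuple $(v,W,U,S)$, after discarding exactly those paths whose latent node happens to lie in the enlarged set $\widetilde S$.

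Concretely, I would first use Condition~(iii) for $(v,W,U,S)$ to fix an intersection-free matching $\Pi=\{\pi_i\}_{i=1}^{n}$ of $W$ and $U$ that avoids $S$, with $\pi_i: w_i\leftarrow h_i\rightarrow u_i$; here the $h_i$ are pairwise distinct and none lies in $S$. Set $T=\{i: h_i\in\widetilde S\}$ and $\widetilde W=\{w_i: i\notin T\}$, $\widetilde U=\{u_i: i\notin T\}$. The crucial point is to show $T\neq\{1,\dots,n\}$, so that $\widetilde W$ and $\widetilde U$ are nonempty. Indeed, if every $h_i$ lay in $\widetilde S$, then, since $h\notin\widetilde S\supseteq S$ and $h\in\pa(v)$ by Condition~(i), appending the self-path $v\leftarrow h\rightarrow v$ to $\Pi$ would produce an intersection-free matching of $\{v\}\cup W$ and $\{v\}\cup U$ avoiding $S$ (using also $v\notin W\cup U$ from Condition~(i)), contradicting Condition~(iv) for $(v,W,U,S)$. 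Given nonemptiness, Conditions~(i) and (ii) for $(v,\widetilde W,\widetilde U,\widetilde S)$ are routine: $\widetilde W\subseteq W$ and $\widetilde U\subseteq U$ are disjoint, of equal cardinality $n-|T|$, and avoid $v$, while $\pa(v)\setminus\widetilde S=\{h\}$ follows from $\pa(v)\setminus S=\{h\}$ together with $S\subseteq\widetilde S$ and $h\notin\widetilde S$. Condition~(iii) holds because $\{\pi_i: i\notin T\}$ is, by the definition of $T$, an intersection-free matching of $\widetilde W$ and $\widetilde U$ that avoids $\widetilde S$.

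It remains to verify Condition~(iv) for the new tuple, which I would do by contradiction. Suppose there were an intersection-free matching $\Pi'$ of $\{v\}\cup\widetilde W$ and $\{v\}\cup\widetilde U$ avoiding $\widetilde S$. Its latent nodes all lie in $\cH\setminus\widetilde S$, whereas the latent nodes $h_i$ with $i\in T$ lie in $\widetilde S\setminus S$; hence the combined system $\Pi'\cup\{\pi_i: i\in T\}$ uses pairwise distinct latent nodes, none of them in $S$, and its endpoint sets are $\{v\}\cup\widetilde W\cup\{w_i: i\in T\}=\{v\}\cup W$ and $\{v\}\cup\widetilde U\cup\{u_i: i\in T\}=\{v\}\cup U$. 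So it is an intersection-free matching of $\{v\}\cup W$ and $\{v\}\cup U$ avoiding $S$, contradicting Condition~(iv) for $(v,W,U,S)$; this is the contradiction sought.

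I expect the main obstacle to be precisely the nonemptiness step: the naive ``delete the forbidden paths'' construction can collapse $\widetilde W$ and $\widetilde U$ to the empty set, and the resolution is the observation above that Condition~(iv), fed the extra self-path $v\leftarrow h\rightarrow v$, already rules out a matching of $W$ and $U$ avoiding $S$ all of whose latent nodes lie in $\widetilde S$. Everything else is careful bookkeeping tracking which latent nodes lie inside $\widetilde S$ and which lie outside.
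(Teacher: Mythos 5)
Your proposal is correct and follows essentially the same route as the paper's proof: delete from a witnessing matching the paths whose latent node lies in $\widetilde{S}$, verify Conditions (i)--(iii) directly, and rule out a matching of $\{v\}\cup\widetilde{W}$ and $\{v\}\cup\widetilde{U}$ by re-adding the deleted paths and invoking Condition (iv) for the original tuple. Your nonemptiness argument via the self-path $v \leftarrow h \rightarrow v$ is the same idea the paper uses (there phrased as ``$h$ must appear in the matching''), just spelled out more explicitly.
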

\begin{proof}
Define $W=\{w_1, \ldots, w_k\}$ and $U=\{u_1, \ldots, u_k\}$ and let $\Pi=\{\pi_1, \ldots, \pi_k\}$ be an intersection-free matching of $W$ and $U$ such that $\pi_i$ is given by $w_i \leftarrow h_i \rightarrow u_i$. Since $\Pi$ avoids $S$, each latent node $h_i$ that appears in $\Pi$ is not in $S$.  If a latent node $h_i$ is in $\widetilde{S}$, we remove $w_i$ from $W$ and  $u_i$ from $U$. This defines $\widetilde{W}$ and $\widetilde{U}$ as subsets of $W$ and $U$ respectively. Now, we check that the tuple $(v, \widetilde{W}, \widetilde{U}, \widetilde{S})$ satisfies conditions (i) - (iv) of the matching criterion; recall Definition~\ref{def:matching-criterion}.

To check condition (i), recall that $h \not\in \widetilde{S}$ and $S \subseteq \widetilde{S}$. Hence, it directly follows from $\pa(v) \setminus S = \{h\}$ that $\pa(v) \setminus \widetilde{S} = \{h\}$ also holds. Moreover, we have that $v \not\in \widetilde{W} \cup \widetilde{U}$ since $v \not\in W \cup U$. 

To check condition (ii), first note that $\widetilde{W}$ and $\widetilde{U}$ are subsets of the disjoint sets $W$ and $U$. Thus, the sets $\widetilde{W}$ and $\widetilde{U}$ are also disjoint. To see that $\widetilde{W}$ and $\widetilde{U}$ are nonempty, observe that by the definition of the matching criterion, $h$ is equal to one of the latent nodes $h_1, \ldots, h_k$ appearing in $\Pi$, say $h= h_j$ for some $j \in [k]$. Since $h \not\in \widetilde{S}$, we have not removed $w_j$ and $u_j$ from $W$ and $U$. Hence, the sets $\widetilde{W}$ and $\widetilde{U}$ are nonempty. Clearly, they also have equal cardinality.  

To check condition (iii) note that the paths $\pi_i \in \Pi$ that do not visit a latent node $h_i \in \widetilde{S}$ define a intersection-free matching of $\widetilde{W}$ and $\widetilde{U}$.

Finally, it can be seen by contradiction that condition (iv) is also satisfied. Suppose there is an intersection free matching of $\{v\} \cup \widetilde{W}$ and $\{v\} \cup \widetilde{U}$ that avoids $\widetilde{S}$.  Adding the paths $\pi_i: w_i \leftarrow h_i \rightarrow u_i$ with $h_i \in \widetilde{S} \setminus S$ to this matching, gives an intersection-free matching of $\{v\} \cup W$ and $\{v\} \cup U$. This is a contradiction and we conclude that there can not exist an intersection-free matching of $\{v\} \cup \widetilde{W}$ and $\{v\} \cup \widetilde{U}$ that avoids $\widetilde{S}$.
\end{proof}

For a given node $h \in \cH$ and a set $S \subseteq \cH \setminus \{h\}$, we denote by \texttt{M}($G,h,S$) the decision problem whether there exists a tuple $(v, W, U) \in V \times 2^{V} \times 2^{V}$ such that $(v, W, U, S)$ satisfies the matching criterion  with respect to $h$. To decide  \texttt{M}($G,h,S$), we make use of maximum flows in a special flow graph $G_{\flow}=(V_f,D_f)$ from a designated source node $s \in V_f$ to a target node $t \in V_f$. The standard maximum-flow framework is introduced in \citet{cormen2009introduction}, and a maximum-flow in an acyclic directed graph can be computed in polynomial time with complexity $\mathcal{O}(|V_f|^3)$.

We first address the subproblem of verifying whether a given tuple $(v, W, U, S) \in V \times 2^{V} \times 2^{V} \times 2^{ \cH \setminus \{h\}}$ satisfies the matching criterion with respect to $h$. Conditions (i)-(ii) in the definition of the matching criterion are easy to check. If we suppose that they are satisfied, then we are able to check whether Condition (iii) holds, i.e., whether there exists an intersection-free matching of $W$ and $U$ that avoids $S$, by one maximum flow computation on a suitable flow graph $G_{\flow}^{(iii)}(v,W,U,S)=(V_{(iii)},D_{(iii)})$. The nodes of the flow graph are given by $V_{(iii)}=U \cup W \cup (\cH \setminus S) \cup \{s,t\}$, where $s$ is a source node and $t$ is a target node. The set of edges is given by the union
\begin{align*}
    D_{(iii)} &= \{s \rightarrow w: w \in W\}  \\
    & \, \cup \{w \rightarrow h: h \in \cH \setminus S, w \in W, h \rightarrow w \in D\} \\
    & \, \cup \{h \rightarrow u: h \in \cH \setminus S, u \in U, h \rightarrow u \in D\} \\
    & \, \cup \{u \rightarrow t: u \in U\}.
\end{align*}
We assign to the source node $s$ and the target node $t$ the capacity $\infty$, while all other nodes have capacity $1$. The edges all have capacity $\infty$. By construction, no flow in $G_{\flow}^{(iii)}(v,W,U,S)$ can exceed $|W|=|U|$ in size, hence one may replace the infinite capacities with $|W|=|U|$ in practice. To check whether Condition (iv) holds, we construct a second flow graph $G_{\flow}^{(iv)}(v,W,U,S)=(V_{(iv)},D_{(iv)})$ by adding some nodes and edges to the flow graph $G_{\flow}^{(iii)}(v,W,U,S)$. Let $v'$ be a copy of $v$. Then the graph $G_{\flow}^{(iv)}(v,W,U,S)$ contains the nodes $V_{(iv)} = V_{(iii)} \cup \{v, v'\}$ and the edges
\begin{align*}
    D_{(iv)} = D_{(iii)} & \, \cup \{s \rightarrow v\} \\
    & \, \cup \{v \rightarrow h: h \in \cH \setminus S,  h \rightarrow v \in D \} \\
    & \, \cup \{h \rightarrow v': h \in \cH \setminus S, h \rightarrow v' \in D\} \\
    & \, \cup \{v' \rightarrow t \}.
\end{align*}
Similarly as before, we assign to all edges and to the source node $s$ and the target node $t$ the capacity $\infty$, while we assign to all other nodes the capacity $1$. Since no flow in $G_{\flow}^{(iv)}(v,W,U,S)$ can exceed $|W|+1$ in size, we replace the infinite capacities with $|W|+1$ in practice. An example of a both flow graphs is shown in Figure~\ref{fig:max-flow}.

\tikzset{
      every node/.style={circle, inner sep=0.3mm, minimum size=0.5cm, draw, thick, black, fill=white, text=black},
      every path/.style={thick}
}
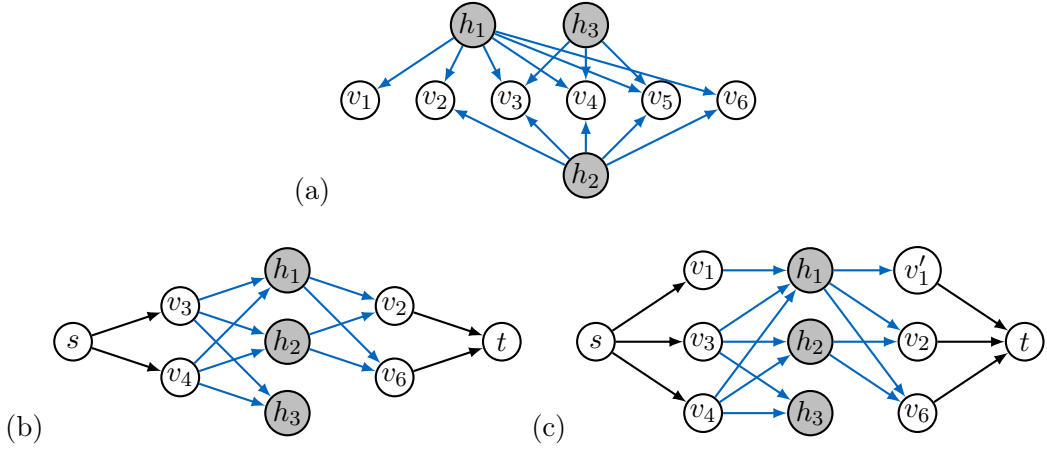
\begin{figure}[t]
\begin{center}
(a)
\begin{tikzpicture}[align=center]
    \node[fill=lightgray] (h1) at (-1,1) {$h_1$};
    \node[fill=lightgray] (h2) at (0.5,-1) {$h_2$};
    \node[fill=lightgray] (h3) at (0.5,1) {$h_3$};
    
    \node[] (1) at (-2.5,0) {$v_1$};
    \node[] (2) at (-1.5,0) {$v_2$};
    \node[] (3) at (-0.5,0) {$v_3$};
    \node[] (4) at (0.5,0) {$v_4$};
    \node[] (5) at (1.5,0) {$v_5$};
    \node[] (6) at (2.5,0) {$v_6$};
    
    \draw[MyBlue] [-latex] (h1) edge (1);
    \draw[MyBlue] [-latex] (h1) edge (2);
    \draw[MyBlue] [-latex] (h1) edge (3);
    \draw[MyBlue] [-latex] (h1) edge (4);
    \draw[MyBlue] [-latex] (h1) edge (5);
    \draw[MyBlue] [-latex] (h1) edge (6);
    \draw[MyBlue] [-latex] (h2) edge (2);
    \draw[MyBlue] [-latex] (h2) edge (3);
    \draw[MyBlue] [-latex] (h2) edge (4);
    \draw[MyBlue] [-latex] (h2) edge (5);
    \draw[MyBlue] [-latex] (h2) edge (6);
    \draw[MyBlue] [-latex] (h3) edge (3);
    \draw[MyBlue] [-latex] (h3) edge (4);
    \draw[MyBlue] [-latex] (h3) edge (5);
\end{tikzpicture} \\[0.5cm]
(b)
\begin{tikzpicture}[align=center, scale=0.95]
    \node[fill=lightgray] (h1) at (0,1) {$h_1$};
    \node[fill=lightgray] (h2) at (0,0) {$h_2$};
    \node[fill=lightgray] (h3) at (0,-1) {$h_3$};
    
    \node[] (s) at (-3,0) {$s$};
    \node[] (t) at (3,0) {$t$};
    \node[] (3) at (-1.5,0.5) {$v_3$};
    \node[] (4) at (-1.5,-0.5) {$v_4$};
    \node[] (2) at (1.5,0.5) {$v_2$};
    \node[] (6) at (1.5,-0.5) {$v_6$};    
    
    \draw[MyBlue] [-latex] (3) edge (h1);
    \draw[MyBlue] [-latex] (3) edge (h2);
    \draw[MyBlue] [-latex] (3) edge (h3);
    \draw[MyBlue] [-latex] (4) edge (h1);
    \draw[MyBlue] [-latex] (4) edge (h2);
    \draw[MyBlue] [-latex] (4) edge (h3);
    \draw[MyBlue] [-latex] (h1) edge (2);
    \draw[MyBlue] [-latex] (h2) edge (2);
    \draw[MyBlue] [-latex] (h1) edge (6);
    \draw[MyBlue] [-latex] (h2) edge (6);
    \draw[] [-latex] (s) edge (3);
    \draw[] [-latex] (s) edge (4);
    \draw[] [-latex] (2) edge (t);
    \draw[] [-latex] (6) edge (t);
\end{tikzpicture}
(c)
\begin{tikzpicture}[align=center, scale=0.95]
    \node[fill=lightgray] (h1) at (0,1) {$h_1$};
    \node[fill=lightgray] (h2) at (0,0) {$h_2$};
    \node[fill=lightgray] (h3) at (0,-1) {$h_3$};
    
    \node[] (s) at (-3,0) {$s$};
    \node[] (t) at (3,0) {$t$};
    \node[] (1) at (-1.5,1) {$v_1$};
    \node[] (3) at (-1.5,0) {$v_3$};
    \node[] (4) at (-1.5,-1) {$v_4$};
    \node[] (2) at (1.5,0) {$v_2$};
    \node[] (6) at (1.5,-1) {$v_6$};  
    \node[] (1c) at (1.5,1) {$v_1'$};
    
    \draw[MyBlue] [-latex] (1) edge (h1);
    \draw[MyBlue] [-latex] (3) edge (h1);
    \draw[MyBlue] [-latex] (3) edge (h2);
    \draw[MyBlue] [-latex] (3) edge (h3);
    \draw[MyBlue] [-latex] (4) edge (h1);
    \draw[MyBlue] [-latex] (4) edge (h2);
    \draw[MyBlue] [-latex] (4) edge (h3);
    \draw[MyBlue] [-latex] (h1) edge (1c);
    \draw[MyBlue] [-latex] (h1) edge (2);
    \draw[MyBlue] [-latex] (h2) edge (2);
    \draw[MyBlue] [-latex] (h1) edge (6);
    \draw[MyBlue] [-latex] (h2) edge (6);
    \draw[] [-latex] (s) edge (1);
    \draw[] [-latex] (s) edge (3);
    \draw[] [-latex] (s) edge (4);
    \draw[] [-latex] (1c) edge (t);
    \draw[] [-latex] (2) edge (t);
    \draw[] [-latex] (6) edge (t);
\end{tikzpicture}
\caption{Using maximum flow to verify whether the tuple $(v, W, U, S)=(v_1,\{v_3,v_4\},\{v_2,v_6\},\emptyset)$ satisfies Conditions (iii) and (iv) of the matching criterion. (a) The considered sparse factor analysis graph. (b) The flow graph $G_{\flow}^{(iii)}(v,W,U,S)$. (c) The flow graph $G_{\flow}^{(iv)}(v,W,U,S)$.}
\label{fig:max-flow}
\end{center}
\end{figure}

Let \texttt{MaxFlow}($G_{\flow}^{(iii)}(v,W,U,S)$) be the maximum flow from $s$ to $t$ in the flow graph $G_{\flow}^{(iii)}(v,W,U,S)$ and let \texttt{MaxFlow}($G_{\flow}^{(iv)}(v,W,U,S)$) be the maximum flow from $s$ to $t$ in the flow graph $G_{\flow}^{(iv)}(v,W,U,S)$. We have the following result.

\begin{lemma} \label{lem:verify-tuple}
Let $G=(V \cup \mathcal{H}, D)$ be a factor analysis graph and fix a latent node $h \in \cH$.  Suppose that the tuple $(v, W, U, S) \in V \times 2^{V} \times 2^{V} \times 2^{ \cH \setminus \{h\}}$ satisfies Conditions (i) and (ii) of the matching criterion with respect to $h$. Then, Conditions (iii) and (iv) of the matching criterion are satisfied if and only if $\mathtt{MaxFlow}(G_{\flow}^{(iii)}(v,W,U,S))=|W|$ and $\mathtt{MaxFlow}(G_{\flow}^{(iv)}(v,W,U,S))=|W|$.
\end{lemma}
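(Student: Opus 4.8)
The plan is to translate the combinatorial notion ``intersection-free matching'' into ``system of pairwise vertex-disjoint $s$–$t$ paths'' and then invoke the integrality of maximum flow. First I would record the structural shape of the two flow graphs. Using Conditions (i) and (ii) (which guarantee $|W|=|U|\geq 1$, $W\cap U=\emptyset$ and $v\notin W\cup U$, so that in particular $v$ and its copy $v'$ are genuinely new vertices), both $G_{\flow}^{(iii)}(v,W,U,S)$ and $G_{\flow}^{(iv)}(v,W,U,S)$ are layered acyclic digraphs. In the first, every $s$–$t$ path has the form $s\to w\to h\to u\to t$ with $w\in W$, $h\in\cH\setminus S$, $u\in U$; in the second, every $s$–$t$ path has the form $s\to x\to h\to y\to t$ with $x\in W\cup\{v\}$, $h\in\cH\setminus S$, $y\in U\cup\{v'\}$. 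By construction of the edge sets, such a path exists precisely when $h\to x\in D$ (reading $h\to v'\in D$ as $h\to v\in D$) and $h\to y\in D$, i.e. precisely when $x\leftarrow h\rightarrow y$ is an admissible matching path that avoids $S$, since nodes of $S$ do not appear in the flow graph at all.

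Next I would set up the correspondence between integral flows and matchings. Since all interior vertices carry capacity $1$, the standard vertex-splitting reduction turns each flow graph into an edge-capacitated network, and by flow integrality a maximum flow may be taken to be integral; in an acyclic network such a flow of value $f$ decomposes into $f$ unit $s$–$t$ paths, which are pairwise vertex-disjoint because every interior vertex has capacity $1$. Reading off the interior vertices, a value-$|W|$ flow in $G_{\flow}^{(iii)}(v,W,U,S)$ yields $|W|$ vertex-disjoint paths $s\to w_i\to h_i\to u_i\to t$; since $s$ is adjacent only to $W$ and $t$ only to $U$, these exhaust $W$ and $U$, and vertex-disjointness forces the $h_i$ to be distinct, so $\{w_i\leftarrow h_i\rightarrow u_i\}$ is an intersection-free matching of $W$ and $U$ avoiding $S$; conversely any such matching gives vertex-disjoint paths, hence a flow of value $|W|$. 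Because $s$ is adjacent to exactly the $|W|$ capacity-$1$ vertices of $W$, no flow exceeds $|W|$, so $\mathtt{MaxFlow}(G_{\flow}^{(iii)}(v,W,U,S))=|W|$ if and only if Condition (iii) holds. The identical argument applied to $G_{\flow}^{(iv)}(v,W,U,S)$—now with $s$ adjacent to the $|W|+1$ capacity-$1$ vertices of $W\cup\{v\}$ and $t$ to those of $U\cup\{v'\}$, the copy $v'$ being present precisely so that the matching path $v\leftarrow h\rightarrow v$ becomes the legal vertex-disjoint path $s\to v\to h\to v'\to t$—shows that $\mathtt{MaxFlow}(G_{\flow}^{(iv)}(v,W,U,S))=|W|+1$ if and only if there is an intersection-free matching of $\{v\}\cup W$ and $\{v\}\cup U$ avoiding $S$, i.e. if and only if Condition (iv) fails. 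Since $\mathtt{MaxFlow}(G_{\flow}^{(iv)}(v,W,U,S))\leq|W|+1$ always, Condition (iv) is equivalent to $\mathtt{MaxFlow}(G_{\flow}^{(iv)}(v,W,U,S))\leq|W|$.

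Finally I would assemble the two equivalences. If Conditions (iii) and (iv) both hold, then $\mathtt{MaxFlow}(G_{\flow}^{(iii)}(v,W,U,S))=|W|$; moreover $G_{\flow}^{(iii)}(v,W,U,S)$ is a subnetwork of $G_{\flow}^{(iv)}(v,W,U,S)$ (the extra edges touch only $v$ and $v'$), so any flow in the former extends to a flow in the latter, giving $\mathtt{MaxFlow}(G_{\flow}^{(iv)}(v,W,U,S))\geq|W|$, which together with Condition (iv) forces equality. Conversely, if both maximum flows equal $|W|$, then Condition (iii) holds by the first equivalence, and $\mathtt{MaxFlow}(G_{\flow}^{(iv)}(v,W,U,S))=|W|<|W|+1$ gives Condition (iv). I expect the main obstacle to be the bookkeeping in the path-decomposition step: one must argue cleanly that an integral maximum flow decomposes into vertex-disjoint paths of exactly the prescribed layered shape, that vertex-disjointness corresponds exactly to the ``distinct $h_i$'' requirement in the definition of an intersection-free matching, and that the auxiliary vertex $v'$ faithfully encodes the degenerate matching path $v\leftarrow h\rightarrow v$ without creating spurious matchings; the remaining steps are routine.
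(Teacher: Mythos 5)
Your proposal is correct and follows essentially the same route as the paper's proof: translating intersection-free matchings avoiding $S$ into systems of vertex-disjoint $s$--$t$ paths, using unit node capacities and flow integrality to get $\mathtt{MaxFlow}(G_{\flow}^{(iii)})=|W|$ iff Condition (iii) and $\mathtt{MaxFlow}(G_{\flow}^{(iv)})=|W|+1$ iff Condition (iv) fails, and then concluding via the observation that $\mathtt{MaxFlow}(G_{\flow}^{(iv)})\in\{|W|,|W|+1\}$ once the first flow attains $|W|$. No gaps; the bookkeeping you flag (layered path shape, distinct $h_i$ from vertex-disjointness, the role of the copy $v'$) is handled exactly as in the paper.
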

\begin{proof}
The proof is along the lines of the proofs of  \citet[Theorem 6]{foygel2012halftrek} and   \citet[Theorem 5.1]{barber2022halftrek}. Let $(v, W, U, S) \in V \times 2^{V} \times 2^{V} \times 2^{ \cH \setminus \{h\}}$ be a tuple that satisfies Conditions (i) and (ii) of the matching criterion with respect to $h$. First, we show that Condition (iii) is satisfied if and only if $\mathtt{MaxFlow}(G_{\flow}^{(iii)}(v,W,U,S))=|W|$.
Suppose Condition (iii) is satisfied, i.e., there an intersection-free matching $\Pi$ of $W$ and $U$ that avoids $S$. For each path $\pi \in \Pi$ of the form
\[
    \pi: w \leftarrow h \rightarrow u,
\]
with $w \in W$, $u \in U$ and $h \in \cH \setminus S$, add a flow of size $1$ along the path
\[
    \widetilde{\pi}: s \rightarrow u \rightarrow h \rightarrow w \rightarrow t
\]
in the flow graph $G_{\flow}^{(iii)}$. Let $\widetilde{\Pi}$ be the set of paths that we obtain in the flow graph $G_{\flow}^{(iii)}$. Observe that the total flow size from $s$ to $t$ in the flow graph is equal to $|W|=|U|$. It is left to check that no capacity constraint is exceeded. This is trivial for the infinite edge capacities as well as for the infinite capacities of the source node $s$ and the target node $t$. Note that all other nodes that appear in some of the paths of the set $\widetilde{\Pi}$ appear exactly once since the original set of paths $\Pi$ is intersection-free and $W \cap U = \emptyset$. 

Now, suppose for the other direction that $\mathtt{MaxFlow}(G_{\flow}^{(iii)}(v,W,U,S))=|W|$. Based on the properties of the max-flow problem with integer capacities \citep{ford1962flows}, this implies that there are $|W|$ directed path from $s$ to $t$, each having a flow of size $1$. Let $\widetilde{\Pi}$ be the collection of these paths. Each node different from $s$ and $t$ can appear at most once in the set of paths $\widetilde{\Pi}$. By construction, each path $\widetilde{\pi} \in \widetilde{\Pi}$ has the form 
\[
    \widetilde{\pi}: s \rightarrow w \rightarrow h \rightarrow u \rightarrow t,
\]
with $w \in W$, $u \in U$ and $h \in \cH \setminus S$. This defines the set of paths $\Pi$ in the factor analysis graph $G$, where each path $\pi \in \Pi$ is of the form 
\[
    \pi: w \leftarrow h \rightarrow u.
\]
By construction, $\Pi$ is an intersection-free matching from $W$ to $U$ since each node other than $s$ or $t$ appears at most once in the set of paths $\widetilde{\Pi}$.

Equivalently, we can see that there is an intersection-free matching of $W \cup \{v\}$ and $U \cup \{v\}$ that avoids $S$ if and only if $\mathtt{MaxFlow}(G_{\flow}^{(iv)}(v,W,U,S))=|W|+1$. We conclude the proof by noting that it must hold that $\mathtt{MaxFlow}(G_{\flow}^{(iv)}(v,W,U,S))\in \{|W|, |W|+1\}$ whenever $\mathtt{MaxFlow}(G_{\flow}^{(iii)}(v,W,U,S))=|W|$. 
\end{proof}

Lemma~\ref{lem:verify-tuple} implies that the decision problem \texttt{M}($G,h,S$) is in the NP-complexity class. Every candidate tuple $(v,W,U)$ to solve \texttt{M}($G,h,S$) can be checked to be a solution in polynomial time by first checking if the tuple satisfies Conditions (i) and (ii) of the matching criterion and then verifying whether $\mathtt{MaxFlow}(G_{\flow}^{(iii)})=|W|$ and $\mathtt{MaxFlow}(G_{\flow}^{(iv)})=|W|$.

We now give an algorithm to decide \texttt{M}($G,h,S$) by iterating over all suitable tuples $(v, W, U) \in V \times 2^{V} \times 2^{V}$. By applying Lemma~\ref{lem:verify-tuple}, we check for each tuple whether it satisfies the matching criterion with respect to $h$. The next fact simplifies the search for tuples that satisfy the matching criterion.

\begin{lemma} \label{lem:choice-of-W-and-U}
    Let $G=(V \cup \mathcal{H}, D)$ be a factor analysis graph. If the tuple $(v, W, U, S) \in V \times 2^{V} \times 2^{V} \times 2^{ \cH \setminus \{h\}}$ satisfies the matching criterion with respect to a latent node $h \in \cH$, then it holds that $h \in \pa(W) \cap \pa(U)$. 
\end{lemma}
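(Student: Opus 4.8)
The plan is to argue by contradiction. Suppose the tuple $(v,W,U,S)$ satisfies the matching criterion with respect to $h$ but $h \notin \pa(W) \cap \pa(U)$; I keep both failure cases (either $h \notin \pa(W)$ or $h \notin \pa(U)$) open, since the argument handles them uniformly. By Condition (iii) there is an intersection-free matching $\Pi = \{\pi_1, \ldots, \pi_k\}$ of $W$ and $U$ that avoids $S$, where $\pi_i\colon w_i \leftarrow h_i \rightarrow u_i$ with $\{w_1,\dots,w_k\}=W$ and $\{u_1,\dots,u_k\}=U$.

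The key observation is that each latent node $h_i$ occurring in $\Pi$ satisfies $h_i \rightarrow w_i \in D$ and $h_i \rightarrow u_i \in D$, hence $h_i \in \pa(W) \cap \pa(U)$. Consequently, under the assumption $h \notin \pa(W) \cap \pa(U)$, the node $h$ is distinct from every $h_i$. Next I would append to $\Pi$ the trivial path $\pi_0\colon v \leftarrow h \rightarrow v$. This is legitimate: the definition of a matching permits a path's two endpoints to coincide, and $h \in \pa(v)$ by Condition (i) (indeed $\pa(v)\setminus S=\{h\}$ forces $h \rightarrow v \in D$, and also $h\notin S$, which anyway follows from $S\subseteq\cH\setminus\{h\}$). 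Set $\Pi' = \Pi \cup \{\pi_0\}$.

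Since $v \notin W \cup U$ by Condition (i), the left endpoints of $\Pi'$ are exactly $\{v\}\cup W$ and the right endpoints are exactly $\{v\}\cup U$, so $\Pi'$ is a matching of $\{v\}\cup W$ and $\{v\}\cup U$. It is intersection-free because $\Pi$ is intersection-free and $h \neq h_i$ for all $i$ by the observation above; and it avoids $S$ because $\Pi$ avoids $S$ and $h \notin S$. But the existence of such a $\Pi'$ contradicts Condition (iv). Hence $h \in \pa(W) \cap \pa(U)$, as claimed.

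The argument is short and I do not anticipate a genuine obstacle; the only point needing a moment's care is the bookkeeping that $\Pi'$ really is a matching in the formal sense of the definition — in particular that adjoining the degenerate path $v \leftarrow h \rightarrow v$ preserves intersection-freeness, which is precisely where the hypothesis $h \notin \pa(W)\cap\pa(U)$ enters.
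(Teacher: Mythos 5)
Your proof is correct and follows essentially the same route as the paper's: both argue by contradiction, note that every latent node appearing in the intersection-free matching of $W$ and $U$ lies in $\pa(W)\cap\pa(U)$ so $h$ cannot appear in it, and then adjoin the degenerate path $v \leftarrow h \rightarrow v$ to contradict Condition (iv). Your extra bookkeeping (that $h \rightarrow v \in D$ follows from Condition (i) and that the definition of a matching permits coinciding endpoints) is exactly the implicit content of the paper's shorter argument.
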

\begin{proof}
    Let $h \in \cH$ be a latent node and suppose that  the tuple $(v, W, U, S) \in V \times 2^{V} \times 2^{V} \times 2^{ \cH \setminus \{h\}}$ satisfies the matching criterion with respect to $h$. By Condition (iii) of the matching criterion, there exists an intersection-free matching $\Pi$ of $W$ and $U$ that avoids $S$. If $h \not\in \pa(W)$ or $h \not\in \pa(U)$, then $h$ does not appear in $\Pi$. Therefore, by adding the path $v \leftarrow h \rightarrow v$ to $\Pi$, we  obtain an intersection-free matching of $\{v\} \cup W$ and  $\{v\} \cup U$ that avoids $S$. This is a contradiction to Condition (iv) of the matching criterion and we conclude that it must hold that $h \in \pa(W) \cap \pa(U)$.
\end{proof}

Our procedure to decide \texttt{M}($G,h,S$) is formalized in Algorithm~\ref{alg:check-M}.

\begin{algorithm}[t]
\caption{Deciding \texttt{M}($G,h,S$).}
\begin{algorithmic}[1]
\REQUIRE Factor analysis graph $G = (V \cup \cH, D)$, a node $h \in \cH$, and a set $S \subseteq \cH \setminus \{h\}$.\\
\FOR {$v \in \ch(h)$ such that $\pa(v)\setminus S  = \{h\}$}
    \FOR {$W \subseteq V \setminus \{v\}$ such that $h \in \pa(W)$ and $|W| \leq \min\{(|V|-1)/2, |\cH \setminus S|\}$} 
        \FOR {$U \subseteq \ch(\pa(W)\setminus S) \setminus (\{v\} \cup W)$ such that $h \in \pa(U)$ and $|U|=|W|$}
            \IF {$\mathtt{MaxFlow}(G_{\flow}^{(iii)}(v,W,U,S))=|W|$ and $\mathtt{MaxFlow}(G_{\flow}^{(iv)}(v,W,U,S))=|W|$}
                \RETURN ``yes''.
            \ENDIF
        \ENDFOR
    \ENDFOR
\ENDFOR
\RETURN ``no''.
\end{algorithmic}
\label{alg:check-M}
\end{algorithm}

\begin{theorem}\label{thm:algorithm-one-node}
Algorithm \ref{alg:check-M} is sound and complete for deciding \texttt{M}($G,h,S$). If we only allow sets $W$ with $|W| \leq k$ in line $2$, then the algorithm has complexity at most $\mathcal{O}(|V|^{k+1} (|V|+|\cH|)^3)$.
\end{theorem}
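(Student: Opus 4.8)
\emph{Proof proposal.} The statement has three parts, and the plan is to treat soundness, completeness, and the running-time bound in turn, building on Lemma~\ref{lem:verify-tuple} and Lemma~\ref{lem:choice-of-W-and-U}. For \emph{soundness}: suppose the algorithm returns ``yes''. Then it has exhibited a triple $(v,W,U)$ with $v\in\ch(h)$ and $\pa(v)\setminus S=\{h\}$ (loop condition in line~1), with $W\subseteq V\setminus\{v\}$ and $h\in\pa(W)$ (line~2), with $U\subseteq\ch(\pa(W)\setminus S)\setminus(\{v\}\cup W)$, $h\in\pa(U)$ and $|U|=|W|$ (line~3), and with $\mathtt{MaxFlow}(G_{\flow}^{(iii)}(v,W,U,S))=|W|=\mathtt{MaxFlow}(G_{\flow}^{(iv)}(v,W,U,S))$ (line~4). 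The first three items already yield Conditions (i) and (ii) of the matching criterion: $v\notin W\cup U$ since $W,U\subseteq V\setminus\{v\}$; $W$ and $U$ are disjoint since $U\cap(\{v\}\cup W)=\emptyset$; they are nonempty since $h\in\pa(W)$ forces $W\neq\emptyset$ and $|U|=|W|$; and they have equal cardinality. By Lemma~\ref{lem:verify-tuple} the two max-flow equalities are equivalent to Conditions (iii) and (iv). Hence $(v,W,U,S)$ satisfies the matching criterion with respect to $h$, so $\mathtt{M}(G,h,S)$ is a yes-instance.

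For \emph{completeness}: suppose $\mathtt{M}(G,h,S)$ is a yes-instance, witnessed by a tuple $(v^{\ast},W^{\ast},U^{\ast},S)$ satisfying the matching criterion. I would show that this very triple $(v^{\ast},W^{\ast},U^{\ast})$ is visited by the nested loops, so that the max-flow tests in line~4 succeed by Lemma~\ref{lem:verify-tuple} and the algorithm outputs ``yes''. Condition (i) gives $v^{\ast}\in\ch(h)$ with $\pa(v^{\ast})\setminus S=\{h\}$, so $v^{\ast}$ passes the loop condition in line~1. Next, $W^{\ast}\subseteq V\setminus\{v^{\ast}\}$ by Conditions (i)--(ii); $h\in\pa(W^{\ast})$ by Lemma~\ref{lem:choice-of-W-and-U}; and the size restriction of line~2 holds because $W^{\ast}$ and $U^{\ast}$ are disjoint equal-size subsets of $V\setminus\{v^{\ast}\}$ (so $|W^{\ast}|\leq(|V|-1)/2$), while the intersection-free matching of $W^{\ast}$ and $U^{\ast}$ avoiding $S$ furnished by Condition (iii) uses $|W^{\ast}|$ pairwise distinct latent nodes, all lying in $\cH\setminus S$ (so $|W^{\ast}|\leq|\cH\setminus S|$). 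Finally, for line~3: each $u\in U^{\ast}$ is an endpoint of a path $w\leftarrow h'\rightarrow u$ in that matching with $h'\in\pa(w)\setminus S\subseteq\pa(W^{\ast})\setminus S$, so $U^{\ast}\subseteq\ch(\pa(W^{\ast})\setminus S)$; moreover $U^{\ast}\cap(\{v^{\ast}\}\cup W^{\ast})=\emptyset$ by Conditions (i)--(ii), $h\in\pa(U^{\ast})$ by Lemma~\ref{lem:choice-of-W-and-U}, and $|U^{\ast}|=|W^{\ast}|$. Thus $(v^{\ast},W^{\ast},U^{\ast})$ is enumerated, and completeness follows.

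For the \emph{running time}, I would bound the number of innermost iterations and multiply by the cost of one iteration. The loop in line~1 runs over at most $|V|$ nodes; once $|W|\leq k$ is imposed, the loops in lines~2 and~3 enumerate $\mathcal{O}(|V|^{k})$ candidate sets (subsets of $V$ of size at most $k$, recalling $|U|=|W|$), so the number of visited triples $(v,W,U)$ is $\mathcal{O}(|V|^{k+1})$. For each, line~4 performs two maximum-flow computations on the acyclic networks $G_{\flow}^{(iii)}(v,W,U,S)$ and $G_{\flow}^{(iv)}(v,W,U,S)$; both have $\mathcal{O}(|V|+|\cH|)$ vertices (node capacities are realized by the standard vertex-splitting, which preserves this order), so each flow is computed in $\mathcal{O}((|V|+|\cH|)^3)$ time. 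Multiplying yields the stated bound $\mathcal{O}(|V|^{k+1}(|V|+|\cH|)^3)$.

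The main obstacle is the completeness step: one must certify that the structural pruning of the search space --- insisting on $h\in\pa(W)\cap\pa(U)$ and on $U\subseteq\ch(\pa(W)\setminus S)$ instead of enumerating arbitrary sets --- never discards a witnessing tuple. This is precisely where Lemma~\ref{lem:choice-of-W-and-U} and a careful reading of the intersection-free matching guaranteed by Condition (iii) do the work; soundness and the complexity accounting are then routine consequences of Lemma~\ref{lem:verify-tuple} and the cubic max-flow bound.
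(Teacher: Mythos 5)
Your proposal is correct and follows essentially the same route as the paper: soundness and completeness via Lemma~\ref{lem:verify-tuple} together with Lemma~\ref{lem:choice-of-W-and-U} (you merely spell out the completeness check --- that the restrictions in lines~2--3 never exclude a witnessing tuple --- which the paper leaves implicit), and the same complexity accounting of $\mathcal{O}(|V|^{k+1})$ max-flow computations at cubic cost each.
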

\begin{proof}
The algorithm is sound and complete by Lemma~\ref{lem:choice-of-W-and-U} and Lemma~\ref{lem:verify-tuple}. For the complexity, note that we run the ``inner'' algorithm (lines 2 to 8) at most $|V|$ times. In the inner algorithm we iterate at most over all sets $W \subseteq V$ with cardinality at most $k$ in line $s$. The number of subsets of $W$ with cardinality at most $k$ is
\[
    \sum_{i=0}^k \binom{|V|}{i} = \cO(|V|^k)
\]
In line $3$ we then iterate at most over all $U \subseteq V$ with $|U|=|W|=k$. Thus, we compute at most $\cO(|V|^{k+1})$ maximum flows on acyclic graphs with at most $|V| + |\cH| + 3$ nodes. By \citet[Section 26]{cormen2009introduction}, each maximum flow computation is of complexity $\cO((|V|+|\cH|)^3)$, and we conclude that the total complexity of Algorithm~\ref{alg:check-M} is $\mathcal{O}(|V|^{k+1} (|V|+|\cH|)^3)$.
\end{proof}

Finally, we provide a procedure for deciding M-identifiability in  Algorithm~\ref{alg:check-M-id}, where we iterate over all nodes $h \in \cH \setminus S$ and solve \texttt{M}($G,h,S$) in each step. It is easy to see that the algorithm is sound and complete.  Under the same constraints as in Theorem~\ref{thm:algorithm-one-node}, the complexity is at most $\mathcal{O}(|\cH|^2|V|^{k+1} (|V|+|\cH|)^3)$.

\begin{algorithm}[t]
\caption{Deciding M-identifiability.}
\begin{algorithmic}[1]
\REQUIRE Factor analysis graph $G = (V \cup \cH, D)$.\\
\ENSURE Solved nodes $S \leftarrow \{h \in \cH: \ch(h) = \emptyset\}$.
\REPEAT 
    \FOR {$h \in \cH \setminus S$}
        \IF {\texttt{M}($G,h,S$) holds}
            \STATE {$S \leftarrow S \cup \{h\}$.}
            \BREAK
        \ENDIF
    \ENDFOR
\UNTIL{$S = \cH$ or no change has occurred in the last iteration.}
\RETURN ``yes'' if $S=\cH$, ``no'' otherwise.
\end{algorithmic}
\label{alg:check-M-id}
\end{algorithm}

\subsection{Deciding Extended M-identifiability}
In this section, we provide an algorithm for deciding extended M-identifiability in a factor analysis graph. We start by proposing a procedure to find a tuple that satisfies the local BB-criterion; recall Theorem~\ref{thm:f-identifiability}. Denote by $\texttt{L}(G,S)$ the subproblem of deciding whether there exists a set $B \subseteq V$ such that the tuple $(B,S)$ satisfies the local BB-criterion for a fixed set of latent nodes $S \subseteq \cH$. The next lemma verifies that we can always fix the set $S$ to be the set of all nodes already proven to be generically sign-identifiable.

\begin{lemma} \label{lem:F-solved-nodes}
Let $G=(V \cup \cH, D)$ be a factor analysis graph and suppose that the tuple $(B,S)\in 2^V \times 2^{\cH}$  satisfies the local BB-criterion. Let $h \in \jpa(B)\setminus S$ be a latent node. Then there is $\widetilde{B} \in 2^V$ with $|\widetilde{B}|=|B|-1$ and $\jpa(\widetilde{B}) \setminus (S \cup h) = \jpa(B) \setminus (S \cup h)$ such that $(\widetilde{B}, S \cup h)$ satisfies the local BB-criterion. 
\end{lemma}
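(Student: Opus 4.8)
The plan is to define $\widetilde{B}$ explicitly by deleting one carefully chosen child of $h$ from $B$, and then to verify the three conditions of the local BB-criterion for $(\widetilde{B},S\cup\{h\})$ in turn. Write $\mathcal{L}:=\jpa(B)\setminus S$, let $\widetilde{G}=G[B\cup\mathcal{L}]$ be the full-ZUTA subgraph supplied by condition~(i), let $h_1\prec_{\mathrm{ZUTA}}\cdots\prec_{\mathrm{ZUTA}}h_m$ be the unique ZUTA-ordering of $\mathcal{L}$ induced by $\widetilde{G}$, and say $h=h_j$. Since $\widetilde{G}$ satisfies ZUTA, the set $\ch_{\widetilde{G}}(h_j)\setminus\bigcup_{k>j}\ch_{\widetilde{G}}(h_k)$ is nonempty, and using the nestedness $\ch_{\widetilde{G}}(h_1)\supseteq\ch_{\widetilde{G}}(h_2)\supseteq\cdots$ of a full-ZUTA graph, any $v^{\star}$ in it has the property that its parents inside $\mathcal{L}$ are exactly $\{h_1,\dots,h_j\}$. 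I would set $\widetilde{B}:=B\setminus\{v^{\star}\}$, so $|\widetilde{B}|=|B|-1$.

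First I would prove the structural claim that $\widetilde{G}':=G[\widetilde{B}\cup(\mathcal{L}\setminus\{h\})]$ is again full-ZUTA, with ZUTA-ordering $h_1\prec\cdots\prec h_{j-1}\prec h_{j+1}\prec\cdots\prec h_m$. This follows by tracking children: $h_1$ (or $h_2$, if $j=1$) still has all of $\widetilde{B}$ as children, consecutive surviving nodes still differ by exactly one child, and deleting $v^{\star}$ simply merges the step over $h_j$ (or truncates the tail, if $j=m$); this is condition~(i). The inclusion $\jpa(\widetilde{B})\setminus(S\cup\{h\})\subseteq\mathcal{L}\setminus\{h\}$ is immediate from $\widetilde{B}\subseteq B$, and for the reverse, condition~(iii) for $\widetilde{G}$ forces $|B|\ge m+1$, so every surviving latent node keeps at least two children in $\widetilde{B}$ and hence lies in $\jpa(\widetilde{B})$; this gives the claimed equality $\jpa(\widetilde{B})\setminus(S\cup\{h\})=\mathcal{L}\setminus\{h\}$. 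For condition~(iii): a full-ZUTA graph on $b$ observed and $m$ latent nodes has $mb-\binom{m}{2}$ edges, so via $\binom{b+1}{2}=\binom{b}{2}+b$ condition~(iii) for $\widetilde{G}$ reads $m|B|-\binom{m}{2}<\binom{|B|}{2}$, and a one-line computation (put $b'=|B|-1$, $m'=m-1$) shows this implies $(m-1)(|B|-1)-\binom{m-1}{2}<\binom{|B|-1}{2}$, which is condition~(iii) for $\widetilde{G}'$, in fact with slack.

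The hard part is condition~(ii), where I must supply for every $\ell\in\mathcal{L}\setminus\{h\}$ a $\widetilde{B}$-first-ordering on $\ch(\ell)$ witnessing every child of $\ell$ outside $\widetilde{B}$. If $\ell=h_i$ with $i>j$, then $v^{\star}\notin\ch(h_i)$, so the children of $h_i$ outside $\widetilde{B}$ are exactly those outside $B$, and the old ordering $\prec_{h_i}$ still works once one notes that removing $h_j$ from the ZUTA-down-set of $h_i$ on both sides of $\jpa(\{v,u\})\setminus S\subseteq\{\ell'\preceq_{\mathrm{ZUTA}}h_i\}$ is harmless. If $\ell=h_i$ with $i<j$, then $v^{\star}$ is a \emph{new} outside-child of $h_i$; here its witness is the private child $v_i\in\ch_{\widetilde{G}}(h_i)\setminus\ch_{\widetilde{G}}(h_{i+1})$ of $h_i$ in $\widetilde{G}$, because the $\mathcal{L}$-parents of $v_i$ are $\{h_1,\dots,h_i\}$ and those of $v^{\star}$ are $\{h_1,\dots,h_j\}$, so $\jpa(\{v^{\star},v_i\})\setminus(S\cup\{h\})=\{h_1,\dots,h_i\}$, which is exactly the ZUTA-down-set of $h_i$ in $\widetilde{G}'$, and $v_i\in\widetilde{B}$ forces $v_i$ before $v^{\star}$ in any $\widetilde{B}$-first-ordering. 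Choosing $\prec'_{h_i}$ to be a $\widetilde{B}$-first-ordering that lists $v^{\star}$ immediately after the elements of $\ch(h_i)\cap\widetilde{B}$ and then keeps the old relative order on $\ch(h_i)\setminus B$ simultaneously witnesses $v^{\star}$ and every previously external child, whose old witness either lies in $\widetilde{B}$, equals $v^{\star}$, or already precedes it under $\prec_{h_i}$; the degenerate cases $j=1$ (no $i<j$) and $j=m$ (no $i>j$, and then $v^{\star}$ is a child of all of $h_1,\dots,h_m$) fall under this analysis.

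Combining, $(\widetilde{B},S\cup\{h\})$ satisfies conditions (i)--(iii) of the local BB-criterion, as required. I expect the main obstacle to be precisely the bookkeeping in condition~(ii): accommodating the newly external node $v^{\star}$ together with all previously external children of a latent node in a single $\widetilde{B}$-first-ordering, while correctly tracking how the ZUTA-down-sets of the surviving latent nodes change when $h$ is removed from $\mathcal{L}$ and $v^{\star}$ from $B$.
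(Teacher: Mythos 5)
Your proposal is correct and follows essentially the same route as the paper: delete the private child of $h$ in the induced full-ZUTA subgraph (your $v^{\star}$ is the paper's $u_h$), verify condition (ii) via a block $\widetilde{B}$-first ordering that places $v^{\star}$ right after $\widetilde{B}$ with the private child $v_i$ of $h_i$ as its witness, and check condition (iii) by the same binomial computation. The only cosmetic difference is in the reverse inclusion for $\jpa(\widetilde{B})$, where you count that each surviving latent node retains at least two children of $\widetilde{B}$ while the paper exhibits a common child $w$ paired with $u_\ell$; both arguments are fine.
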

\begin{proof}
Let $p=|B|$ and $m=|\jpa(B)\setminus S|$. Since $G[B \cup (\jpa(B)\setminus S)]$ is a full-ZUTA graph,  there is a unique observed node $u_\ell$ for each $\ell \in \jpa(B)\setminus S$ such that $u_\ell \in \ch(\ell)$ and $u_\ell \not\in \bigcup_{k \succ_{\text{ZUTA}} \ell} \ch(k)$.
Now, let $h \in \jpa(B)\setminus S$ be a latent node and define $\widetilde{B}$ as 
\[
    \widetilde{B} 
    = B \setminus \{u_h\}.
\]
Note that $|\widetilde{B}| = |B| - 1$. We claim that the pair $(\widetilde{B}, S \cup h)$ satisfies the local BB-criterion. However, we first show that   $\jpa(\widetilde{B}) \setminus (S \cup h) = \jpa(B) \setminus (S \cup h)$. Since $\widetilde{B} \subseteq B$, one inclusion is clear. For the other inclusion, take a latent node $\ell \in \jpa(B) \setminus (S \cup h)$. Observe that the corresponding node $u_\ell$ is an element of $\widetilde{B}$. Moreover, since $m < p$ and $(B,S)$ satisfies the local BB-criterion, there is $w \in B$ such that $w \in \ch(k)\setminus \{u_k\}$ for all $k \in \jpa(B)\setminus S$.  Hence, it must be the case that  $w \in \widetilde{B}$, and since $\ell \in \jpa(u_\ell,w)$, we conclude that $\ell \in \jpa(\widetilde{B})\setminus (S \cup h)$.

It is easy to see that the pair $(\widetilde{B}, S \cup h)$ satisfies Condition (i) of the local BB-criterion. To check Condition (ii), consider a latent node $\ell \in \jpa(B) \setminus (S \cup h)$. If $\ell \succ_{\text{ZUTA}} h$, then $u_h \not\in \ch(\ell)$, and the $B$-first ordering $\prec_\ell$ on $\ch(\ell)$ is also a $\widetilde{B}$-first ordering on $\ch(\ell)$ that satisfies condition (ii).

Now, consider a latent node $\ell \in \jpa(B) \setminus (S \cup h)$ with $\ell \prec_{\text{ZUTA}} h$. Note that $\prec_\ell$ is a $\widetilde{B}$-first-ordering on $\ch(\ell) \setminus \{u_h\}$. Now, we extend this ordering to an ordering $\widetilde{\prec}_\ell$ on the whole set of children $\ch(\ell)$. We define it as the block-ordering
\begin{equation} \label{eq:block-ordering}
    \widetilde{B} \,\,\,\, \widetilde{\prec}_\ell  \,\,\,\, \{u_h\} \,\,\,\, \widetilde{\prec}_\ell  \,\,\,\, \ch(\ell)\setminus B,
\end{equation}
where, for two sets $A,C$, we write $A \, \widetilde{\prec}_\ell \, C$ if $a \, \widetilde{\prec}_\ell \, c$ whenever $a \in A$ and $c \in C$ in the ordering $\widetilde{\prec}_\ell$. Moreover, within each set in~\eqref{eq:block-ordering}, the ordering $\widetilde{\prec}_\ell$ coincides with the ordering $\prec_\ell$. Clearly, the ordering in~\eqref{eq:block-ordering} is a  $\widetilde{B}$-first ordering on the set of children $\ch(h)$. To show that is satisfies condition (ii) of the local BB-criterion,  consider first the node $u_h$. Take the node $u_\ell$ and observe that it satisfies $u_{\ell} \, \widetilde{\prec}_\ell \, u_h$ and $\jpa(\{u_\ell, u_h\})\setminus (S \cup h) \subseteq \{k \in \jpa(\widetilde{B})\setminus(S \cup h): k \preceq_{\text{ZUTA}} \ell\}$. Now, take any other node $v \in \ch(\ell)\setminus B$. Then the existence of a suitable node $u \in \ch(\ell)$ is ensured by the fact that $\widetilde{B}=B \cup \{u_h\}$. Hence $(\widetilde{B}, S \cup h)$ satisfies Condition (ii) the local BB-criterion.

To finish the proof, it remains to show Condition (iii) of the local BB-criterion. Recall that, for $p=|B|$ and $m=|\jpa(B)\setminus S|$, it holds that $ p(m+1) - \binom{m}{2} < \binom{p+1}{2}$. Now, let 
$\widetilde{p}=p-1$ and $\widetilde{m}=m-1$, and consider the following chain of equivalent statements:
\begin{align*}
    &p(m+1) - \binom{m}{2} < \binom{p+1}{2} \\
    \iff &pm - m + m + p - \left\{\binom{m-1}{2}+(m-1)\right\} < \binom{p}{2} + p \\
    \iff &(p-1)m - \binom{m-1}{2} + 1 < \binom{p}{2} \\
    \iff & \widetilde{p}(\widetilde{m}+1) - \binom{\widetilde{m}}{2} < \binom{\widetilde{p}+1}{2}.
\end{align*}
We conclude that the pair $(\widetilde{B}, S \cup h)$ satisfies the local BB-criterion. 
\end{proof}

Next, we provide an algorithm for deciding whether a factor analysis graph is a full-ZUTA graph.

\begin{algorithm}[t]
\caption{Deciding full-ZUTA.}
\begin{algorithmic}[1]
\REQUIRE Factor analysis graph $G = (V \cup \cH, D)$ with $|\cH| \leq |V|$.\\
\ENSURE $p \leftarrow |V|$ and $m \leftarrow |\cH|$.
\STATE {Relabel $\cH \leftarrow \{h_1, \ldots, h_m\}$ such that $\ch(h_i) \leq \ch(h_{i+1})$ for all $i\in[m-1]$.}
\FOR {$i \in [m]$}
    \IF {$|\ch(h_i)| \neq p-i+1$}
        \STATE {\textbf{return} ``no''.}
    \ENDIF 
\ENDFOR
\FOR {$i \in [m-1]$}
    \STATE{$W \leftarrow \ch(h_i) \setminus \ch(h_{i+1})$.}
    \IF {$|W| \neq 1$ }
        \STATE {\textbf{return} ``no''.}
    \ELSE
        \STATE{ $W = \{w\}$.}
        \FOR {$j \in \{i+2, \ldots, m\}$}
            \IF {$w \in \ch(h_j)$}
                \STATE{\textbf{return} ``no''.}
            \ENDIF
        \ENDFOR
    \ENDIF
\ENDFOR
\STATE{\textbf{return} ``yes''.}
\end{algorithmic}
\label{alg:check-full-ZUTA}
\end{algorithm}

\begin{lemma} \label{lem:full-ZUTA-algo}
    A sparse factor analysis graph $G = (V \cup \cH, D)$ is a full-ZUTA graph if and only if Algorithm~\ref{alg:check-full-ZUTA} returns ``yes''. Moreover, the algorithm has complexity at most $\mathcal{O}(|\cH| |V|^2)$.
\end{lemma}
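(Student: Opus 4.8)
The plan is to establish the equivalence by reading off the three phases of Algorithm~\ref{alg:check-full-ZUTA}---the cardinality check in the first loop, the ``remove exactly one child'' check in the second loop, and the inner ``the removed child never reappears'' check---against the two defining requirements of a full-ZUTA graph in Definition~\ref{def:full-ZUTA-graph}, namely $\ch(h_1)=V$ and $\ch(h_{i+1})=\ch(h_i)\setminus\{v_i\}$ with $v_i\in\ch(h_i)$; afterwards I would bound the running time phase by phase. Write $p=|V|$ and $m=|\cH|$ as in the algorithm. The first thing I would record is that in any full-ZUTA graph with witness ordering $h_1\prec\cdots\prec h_m$ the recursion forces $|\ch(h_i)| = p-i+1$, so the children-counts form the strictly decreasing sequence $p,p-1,\dots,p-m+1$; since this sequence has no repeated value, sorting the latent nodes by their number of children is forced and reproduces exactly the witness ordering.

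For the forward direction, assuming $G$ is full-ZUTA I would use this observation to conclude that after the relabeling step the algorithm's $h_1,\dots,h_m$ coincide with the witness ordering, so the first loop passes (as $|\ch(h_i)| = p-i+1$), the second loop passes because $\ch(h_i)\setminus\ch(h_{i+1}) = \{v_i\}$ has size one, and the inner loop passes because the children sets are nested, $\ch(h_1)\supseteq\cdots\supseteq\ch(h_m)$, whence $v_i\notin\ch(h_{i+1})\supseteq\ch(h_j)$ for every $j\ge i+2$; so the algorithm outputs ``yes''. For the converse, suppose the algorithm outputs ``yes'' after relabeling: the first loop gives $|\ch(h_i)| = p-i+1$ for all $i$, in particular $\ch(h_1)=V$. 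The step I expect to be the main obstacle is that nestedness $\ch(h_{i+1})\subseteq\ch(h_i)$ is never checked directly and must be recovered by counting---from $|\ch(h_i)\setminus\ch(h_{i+1})| = 1$ together with $|\ch(h_i)| = p-i+1$ and $|\ch(h_{i+1})| = p-i$ one gets $|\ch(h_i)\cap\ch(h_{i+1})| = p-i = |\ch(h_{i+1})|$, forcing $\ch(h_{i+1})\subseteq\ch(h_i)$. Writing $\{w_i\}=\ch(h_i)\setminus\ch(h_{i+1})$, this gives $\ch(h_{i+1})=\ch(h_i)\setminus\{w_i\}$ with $w_i\in\ch(h_i)$, i.e.\ exactly the full-ZUTA recursion for the ordering $h_1\prec\cdots\prec h_m$; so $G$ is full-ZUTA. (The inner-loop check then turns out to be implied by the first two---nestedness already yields $w_i\notin\ch(h_j)$ for $j\ge i+2$---but it is harmless to keep.)

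For the complexity bound I would note that all children-counts are obtained by one pass over $D$ in time $\mathcal{O}(|\cH||V|)$, and sorting by this key costs $\mathcal{O}(|\cH|\log|\cH|)$; the first loop is $\mathcal{O}(|\cH|)$; and in the second loop there are $\mathcal{O}(|\cH|)$ outer iterations, each forming the set difference $\ch(h_i)\setminus\ch(h_{i+1})$ in $\mathcal{O}(|V|)$ time and running at most $|\cH|$ membership tests at cost $\mathcal{O}(|V|)$ each, i.e.\ $\mathcal{O}(|\cH||V|)$ per outer iteration. Summing yields $\mathcal{O}(|\cH|^2|V|)$, which, since $|\cH|\le|V|$, is at most $\mathcal{O}(|\cH||V|^2)$ and dominates the other terms.
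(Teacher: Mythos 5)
Your proof is correct and follows essentially the same route as the paper: reformulate the full-ZUTA property as the cardinality condition $|\ch(h_i)|=p-i+1$ plus the one-element-difference condition on consecutive children sets, verify that the algorithm's checks are equivalent to this reformulation, and bound the running time loop by loop (arriving at the same $\mathcal{O}(|\cH||V|^2)$ bound). In fact you are somewhat more explicit than the paper on two points it leaves implicit — that the distinct children counts force the algorithm's sorted relabeling to coincide with any witness ordering, and that nestedness of the children sets is recovered from $|W_i|=1$ by cardinality counting (which also shows the inner membership check is redundant) — but the underlying argument is the same.
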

\begin{proof}
We start by analyzing the complexity of Algorithm~\ref{alg:check-full-ZUTA}. We assume w.l.o.g.~that $|V|\geq|\cH|$, since otherwise a factor analysis graph is trivially not a full-ZUTA graph. Computing the children of all latent nodes is of complexity at most $\mathcal{O}(|\cH||V|)$, and ordering the latent nodes is of complexity at most $\mathcal{O}(|\cH|^2)$. Hence, line $1$ is of complexity at most $\mathcal{O}(|\cH||V|)$. Lines $2$-$6$ are also of complexity at most $\mathcal{O}(|\cH||V|)$.
The remaining algorithm consists of two nested for-loops, both iterating over the latent nodes. Computing the set difference in line $8$ is of complexity $\mathcal{O}(|V|^2)$ and
verifying membership in the set of children in line $14$ is of complexity $\mathcal{O}(|V|)$. By considering the nested structure of the computations and using that $|\cH| \leq |V|$, we conclude that the complexity of Algorithm~\ref{alg:check-full-ZUTA} is at most $\mathcal{O}(|\cH| |V|^2)$.

Now, observe that the graph $G$ is a full-ZUTA graph if and only if there is a relabeling of the latent nodes $\cH = \{h_1, \ldots, h_m\}$ such that (i) $|\ch(h_i)|=p-i+1$, and (ii), for all latent nodes $i=1, \ldots, m-1$, there is an observed node $v_i \in \ch(h_i)$ such that $\ch(h_i) = \ch(h_{i+1}) \cup \{v_i\}$. Condition (ii) holds if and only if, for all $i=1, \ldots, m-1$, the set $W_i = \ch(h_i) \setminus \ch(h_{i+1})$ has cardinality $|W_i|=1$ and the single element  of $W_i$ is not contained in $\ch(h_j)$ for $j=\{i+2, \ldots, m\}$. We conclude that the output of Algorithm~\ref{alg:check-full-ZUTA} is ``yes'' if and only if $G$ is a full-ZUTA graph.
\end{proof}

Using Algorithm~\ref{alg:check-full-ZUTA} we can check Condition (i) of the local BB-criterion. The purpose of the next algorithm is to check Condition (ii).

\begin{algorithm}[t]
\caption{Verifying Condition (ii) of the local BB-criterion.}
\begin{algorithmic}[1]
\REQUIRE Factor analysis graph $G = (V \cup \cH, D)$ and a tuple $(B,S) \in 2^V \times 2^{\cH}$ such that  $G[B \cup (\jpa(B)\setminus S)]$ is a full-ZUTA graph with $ \prec_{\text{ZUTA}}$ being the unique ZUTA-ordering on $\jpa(B)\setminus S$.\\
\FOR {$h \in \jpa(B) \setminus S$}
\STATE \textbf{Initialize} $W \leftarrow \ch(h) \setminus B$, $B' \leftarrow B \cap \ch(h)$ and $L \leftarrow \{\ell \in \jpa(B) \setminus S : \ell \preceq_{\text{ZUTA}} h\}$.
\REPEAT 
\FOR {$v \in W$}
    \FOR {$u \in B'$}
        \IF {$\jpa(\{u,v\}) \setminus S \subseteq L$}
            \STATE {$W \leftarrow W \setminus \{v\}$ and $B' = B' \cup \{v\}$.}
            \BREAK { the two inner for-loops.}
        \ENDIF
    \ENDFOR
\ENDFOR
\UNTIL {$W=\emptyset$ or no change has occurred in the last iteration.}
\IF {no change has occurred in the last iteration and $W \neq \emptyset$}
    \STATE {\textbf{return} ``no''.}
\ENDIF
\ENDFOR
\STATE {\textbf{return} ``yes''.}
\end{algorithmic}
\label{alg:check-cond-ii}
\end{algorithm}

\begin{lemma} \label{lem:condition-ii-algo}
    Let  $G = (V \cup \cH, D)$ be a factor analysis graph and consider a tuple $(B,S) \in 2^V \times 2^{\cH}$ such that  $G[B \cup (\jpa(B)\setminus S)]$ is a full-ZUTA graph. Then, the tuple $(B,S)$ satisfies Condition (ii) of the local BB-criterion if and only if Algorithm~\ref{alg:check-cond-ii} returns ``yes''. Moreover, the algorithm has complexity at most $\mathcal{O}(|\cH|^3|V|^3)$.
\end{lemma}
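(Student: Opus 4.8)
The plan is to establish two things: first, that the algorithm is a sound and complete decision procedure for Condition (ii) of the local BB-criterion, and second, that its running time is bounded by $\mathcal{O}(|\cH|^3|V|^3)$. The key observation underlying correctness is that Condition (ii) is equivalent to a \emph{closure} or \emph{reachability} statement: for a fixed $h \in \jpa(B)\setminus S$, we need a $B$-first-ordering $\prec_h$ on $\ch(h)$ such that every node $v \in \ch(h)\setminus B$ can be "certified" by some strictly earlier node $u$ via the containment $\jpa(\{v,u\})\setminus S \subseteq \{\ell \in \jpa(B)\setminus S : \ell \preceq_{\text{ZUTA}} h\}$. Since the ordering is $B$-first, the certifying node $u$ for the first such $v$ must lie in $B\cap\ch(h)$; once $v$ is placed, it may serve as a certifier for later nodes. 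Thus the existence of a valid ordering is exactly the condition that the greedy "growing" process, starting from the set $B' = B\cap\ch(h)$ and repeatedly absorbing any $v\in W$ that has a certifier already in $B'$, eventually exhausts $W = \ch(h)\setminus B$. This is a standard least-fixed-point argument: the order in which nodes are absorbed yields the desired ordering, and conversely any valid ordering absorbs its nodes in an order the greedy process could have followed. Confluence of the greedy closure (the final $B'$ does not depend on the choices made) is the routine monotonicity fact that makes the "\textbf{break} the two inner for-loops" and restart structure harmless.

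Concretely, I would argue completeness first: if Algorithm~\ref{alg:check-cond-ii} returns "yes", then for each $h$ the sequence of absorptions defines a $B$-first-ordering on $\ch(h)$ (elements of $B\cap\ch(h)$ ordered arbitrarily first, then the nodes of $\ch(h)\setminus B$ in absorption order), and by construction each $v\in\ch(h)\setminus B$ has a predecessor $u$ witnessing the $\jpa$-containment, so Condition (ii) holds. For soundness, suppose Condition (ii) holds via some ordering $\prec_h$; I would show by induction along $\prec_h$ that every node of $\ch(h)\setminus B$ is eventually added to $B'$ by the algorithm — the first $\prec_h$-minimal node of $\ch(h)\setminus B$ has its certifier in $B\cap\ch(h)\subseteq B'$, and inductively each later node's certifier is either in $B$ or is an earlier node of $\ch(h)\setminus B$ already absorbed. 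Since the greedy process never gets stuck as long as an uncertified-but-certifiable node remains, it terminates with $W=\emptyset$, so the algorithm returns "yes". The equality $L = \{\ell\in\jpa(B)\setminus S : \ell\preceq_{\text{ZUTA}} h\}$ with $\prec_{\text{ZUTA}}$ the unique ZUTA-ordering of the full-ZUTA graph $\widetilde G$ is exactly the set appearing in Definition~\ref{def:full-factor-criterion}(ii), so the translation is faithful.

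For the complexity bound, I would account for the nested loops: the outer loop runs over $h\in\jpa(B)\setminus S$, at most $|\cH|$ iterations. For each $h$, the \textbf{repeat} loop runs at most $|\ch(h)\setminus B|\le|V|$ times, since each full pass that causes a change absorbs at least one node into $B'$. Within one pass, the double loop over $v\in W$ and $u\in B'$ has at most $|V|^2$ iterations, and each membership/containment test $\jpa(\{u,v\})\setminus S\subseteq L$ costs $\mathcal{O}(|\cH|)$ after computing $\jpa(\{u,v\})$ in $\mathcal{O}(|\cH|)$ (scanning the parent sets of $u$ and $v$). Multiplying: $\mathcal{O}(|\cH|) \cdot \mathcal{O}(|V|) \cdot \mathcal{O}(|V|^2) \cdot \mathcal{O}(|\cH|) = \mathcal{O}(|\cH|^2|V|^3)$, which is within the claimed $\mathcal{O}(|\cH|^3|V|^3)$. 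The main obstacle — though it is more a matter of care than of difficulty — is arguing the confluence of the greedy closure cleanly, i.e. that the "break and restart" control flow computes the same final $B'$ as an idealized simultaneous closure, so that the order of the inner iterations is irrelevant to the yes/no output; this follows from the monotonicity of the absorption rule (adding nodes to $B'$ only enables more absorptions, never disables any), which I would state as a short lemma or inline remark before the induction.
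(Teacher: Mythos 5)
Your proposal is correct and takes essentially the same route as the paper: the paper's own proof dismisses the correctness equivalence as following ``directly from Definition~\ref{def:full-factor-criterion}'' and devotes itself to the same loop-counting complexity analysis you give, so your greedy-closure/monotonicity argument for soundness and completeness merely fills in detail the paper leaves implicit. Your sharper count of $\mathcal{O}(|\cH|^2|V|^3)$ (treating each containment test as $\mathcal{O}(|\cH|)$ rather than the paper's $\mathcal{O}(|\cH|^2)$) is unproblematic, since the lemma only asserts the bound $\mathcal{O}(|\cH|^3|V|^3)$ as an upper estimate.
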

\begin{proof}
It follows directly from Definition~\ref{def:full-factor-criterion} that Algorithm~\ref{alg:check-cond-ii} returns ``yes'' if and only if the tuple $(B,S)$ satisfies Condition (ii) of the local BB-criterion. Hence, we only need to show the complexity. For each latent node $h \in \jpa(B) \setminus S$, the initialization in line $2$ is of less complexity then the remaining part of the algorithm in lines $3$ to $15$. Another repetition in line $3$ occurs only if a node was removed from $W$ in the previous repetition. Hence, after $|W| \leq |V|$ repetitions of line $3$ either all nodes were removed from $W$ or the repetitions were stopped before. By counting the maximal number of repetitions in the for-loops and noting that checking whether $\jpa(\{u,v\}) \setminus S$ is a subset of $L$ is of complexity at most $\mathcal{O}(|\cH|^2)$, we conclude that Algorithm~\ref{alg:check-cond-ii} has complexity at most $\mathcal{O}(|\cH|^3|V|^3)$.
\end{proof}

To solve $\texttt{L}(G,S)$ we need to iterate over subsets $B \subseteq V$ and, for each subset, we use Algorithms~\ref{alg:check-full-ZUTA} and \ref{alg:check-cond-ii} to check the local BB-criterion. To shrink the number of possible subsets, we observe that there has to be a latent node $h \in \cH \setminus S$ such that $B$ is a subset of $\ch(h)$ since otherwise Condition (i) of the local BB-criterion can never be true. Hence, it is enough to first iterate over all latent nodes $h \in \cH \setminus S$ and then iterate over all subsets $B \subseteq \ch(h)$ for solving $\texttt{L}(G,S)$. The procedure for deciding $\texttt{L}(G,S)$ is given in Algorithm~\ref{alg:check-F}.

\begin{algorithm}[t]
\caption{Deciding $\texttt{L}(G,S)$.}
\begin{algorithmic}[1]
\REQUIRE Factor analysis graph $G = (V \cup \cH, D)$, and a set $S \subseteq \cH$.\\
\FOR {$h \in \cH \setminus S$}
    \FOR {$B \subseteq \ch(h)$ such that $|B|\geq 4$}
        \IF {$(B,S)$ satisfies the local BB-criterion (Algorithms~\ref{alg:check-full-ZUTA} and \ref{alg:check-cond-ii})}
            \RETURN ``yes''.
            \BREAK { both for-loops.}
        \ENDIF
    \ENDFOR
\ENDFOR
\RETURN ``no''.
\end{algorithmic}
\label{alg:check-F}
\end{algorithm}

\begin{theorem} \label{thm:algo-F-id}
Algorithm~\ref{alg:check-F} is sound and complete for deciding $\texttt{L}(G,S)$. If we only allow sets $B$ with $|B| \leq \ell$ in line $2$,  then the algorithm has complexity at most $\mathcal{O}(|\cH|^4 |V|^{\ell+3})$.
\end{theorem}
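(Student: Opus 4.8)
The plan is to prove soundness and completeness of Algorithm~\ref{alg:check-F} and then bound its running time; the correctness of the two subroutines it calls is already available from Lemma~\ref{lem:full-ZUTA-algo} and Lemma~\ref{lem:condition-ii-algo}, so the real work is in justifying how the search space is pruned and in the bookkeeping for the complexity count. Soundness is immediate: if the algorithm outputs ``yes'' it has exhibited a concrete set $B$ for which Algorithm~\ref{alg:check-full-ZUTA} certifies Condition~(i), Algorithm~\ref{alg:check-cond-ii} certifies Condition~(ii), and the numerical inequality $|B| + |\widetilde{D}| < \binom{|B|+1}{2}$ of Condition~(iii) holds; by the cited lemmas this means $(B,S)$ genuinely satisfies the local BB-criterion, so $\texttt{L}(G,S)$ is a ``yes'' instance.

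For completeness I would show that no valid witness is ever skipped by the loop restrictions ``$h\in\cH\setminus S$, $B\subseteq\ch(h)$, $|B|\ge 4$''. Suppose $(B,S)$ satisfies the local BB-criterion. Condition~(i) says that $\widetilde{G} = G[B\cup(\jpa(B)\setminus S)]$ is a full-ZUTA graph; by Definition~\ref{def:full-ZUTA-graph} a full-ZUTA graph has at least one latent node, so $\jpa(B)\setminus S\ne\emptyset$, and if $h_1$ is the first latent node in the ZUTA-ordering of $\widetilde{G}$ then the children of $h_1$ in $\widetilde{G}$ are exactly $B$, hence $B\subseteq\ch(h_1)$ with $h_1\in\jpa(B)\setminus S\subseteq\cH\setminus S$. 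Writing $p=|B|$ and $m=|\jpa(B)\setminus S|$, we have $1\le m\le p$ and the edge count of $\widetilde{G}$ is $|\widetilde{D}| = pm-\binom{m}{2} = \sum_{j=0}^{m-1}(p-j) \ge p$; so Condition~(iii) gives $2p \le p + |\widetilde{D}| < \binom{p+1}{2}$, forcing $p\ge 4$. Thus $B$ lies in the range of the inner loop for $h=h_1$, and when it is reached the three conditions are decided correctly (via Lemma~\ref{lem:full-ZUTA-algo} and Lemma~\ref{lem:condition-ii-algo} for (i) and (ii), and a direct evaluation of the inequality for (iii)), so the algorithm returns ``yes''.

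For the complexity, assume the inner loop only considers $|B|\le\ell$. The outer loop runs at most $|\cH|$ times, and for each $h$ the inner loop ranges over the at most $\sum_{i=4}^{\ell}\binom{|V|}{i}=\mathcal{O}(|V|^{\ell})$ subsets of $\ch(h)$ of size between $4$ and $\ell$. For a fixed candidate $B$ we compute $\jpa(B)\setminus S$ and the induced subgraph $\widetilde{G}$ in time $\mathcal{O}(|V|^2|\cH|)$, run Algorithm~\ref{alg:check-full-ZUTA} in time $\mathcal{O}(|\cH||V|^2)$ (Lemma~\ref{lem:full-ZUTA-algo}), run Algorithm~\ref{alg:check-cond-ii} in time $\mathcal{O}(|\cH|^3|V|^3)$ (Lemma~\ref{lem:condition-ii-algo}), and check Condition~(iii) by counting the edges of $\widetilde{G}$ in time $\mathcal{O}(|\cH||V|)$. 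The per-candidate cost is dominated by $\mathcal{O}(|\cH|^3|V|^3)$, so the total is $\mathcal{O}\big(|\cH|\cdot|V|^{\ell}\cdot|\cH|^3|V|^3\big)=\mathcal{O}(|\cH|^4|V|^{\ell+3})$, as claimed.

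I expect the only genuinely delicate step to be the completeness argument, in particular reading off from ``$\widetilde{G}$ is full-ZUTA'' both that $\jpa(B)\setminus S$ is nonempty and that some latent node not in $S$ has all of $B$ among its children, together with the short inequality computation showing that Condition~(iii) by itself already implies $|B|\ge 4$. Everything else is routine: soundness is a direct reading of the subroutine guarantees, and the running-time bound is a straightforward multiplication of the number of loop iterations by the per-iteration cost furnished by Lemmas~\ref{lem:full-ZUTA-algo} and~\ref{lem:condition-ii-algo}.
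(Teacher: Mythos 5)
Your proposal is correct and follows essentially the same route as the paper: soundness from the subroutine guarantees of Lemmas~\ref{lem:full-ZUTA-algo} and~\ref{lem:condition-ii-algo}, completeness by showing any witness $B$ satisfies $B\subseteq\ch(h)$ for some $h\in\cH\setminus S$ and $|B|\geq 4$, and the identical loop-count times per-candidate-cost complexity bookkeeping. The only (harmless) difference is cosmetic: you derive $|B|\geq 4$ from $|\widetilde{D}|\geq|B|$ and $2|B|<\binom{|B|+1}{2}$, whereas the paper uses the closed-form threshold $|B|\geq\lfloor m+\tfrac{1}{2}\sqrt{8m+1}+\tfrac{1}{2}\rfloor+1$ with $m\geq 1$.
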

\begin{proof}
First, we analyze the complexity of Algorithm~\ref{alg:check-F}. We run the ``inner'' algorithm (lines $2$-$7$) at most $|\cH|$ times. In the inner algorithm itself we iterate through subsets $B \subseteq \ch(h) \subseteq V$ with cardinality at most $\ell$. As we have seen in the proof of Theorem~\ref{thm:algorithm-one-node}, the number of subsets of $V$ with cardinality at most $\ell$ is $\mathcal{O}(|V|^\ell)$. Verifying whether a given tuple $(B,S)$ satisfies the local BB-criterion is of complexity at most $|\cH|^3 |V|^3$ as we show in Lemmas~\ref{lem:full-ZUTA-algo} and \ref{lem:condition-ii-algo}. Note that computing the joint parents of a set $B$ is of less complexity. Hence, we conclude that the total complexity of Algorithm~\ref{alg:check-F}  is at most $\mathcal{O}(|\cH|^4 |V|^{\ell+3})$.

For showing that Algorithm~\ref{alg:check-F} is sound and complete, it only remains  to show that $B \subseteq \ch(h)$ for some latent node $h \in \cH$ and $|B| \geq 4$ whenever a tuple $(B,S)\in 2^{V} \times 2^{\cH}$ satisfies the local BB-criterion. Hence, suppose that $(B,S)\in 2^{V} \times 2^{\cH}$ satisfies the local BB-criterion. It has to hold that $\jpa(B) \setminus S$ is nonempty, since otherwise Conditions (i) and (iii) of the local BB-criterion can not hold simultaneously. Moreover, if Condition (i) of the local BB-criterion holds and $G[B \cup (\jpa(B) \setminus S)]$ is a full-ZUTA graph, then it is easy to see that there has to be a latent node $h \in \cH$ such that $B \subseteq \ch(h)$. Finally, Condition (iii) of the local BB-criterion holds if and only if $p \geq \lfloor m + \frac{1}{2} \sqrt{8m+1} + \frac{1}{2}\rfloor + 1$ for $p=|U|$ and $m=|\jpa(B) \setminus S|$. Since $m \geq 1$, it follows that $|U| \geq 4$.
\end{proof}

We conclude this section by providing the final procedure for deciding extended M-identifiability in Algorithm~\ref{alg:check-extended-M-id}. It is easy to see that the algorithm is sound and complete.  Under the same constraints as in Theorem~\ref{thm:algorithm-one-node} and in Theorem~\ref{thm:algo-F-id}, the complexity is at most $\mathcal{O}(|\cH|^2|V|^{\max\{k,l\}+1} (|V|+|\cH|)^3)$.

\begin{algorithm}[t]
\caption{Deciding extended M-identifiability.}
\begin{algorithmic}[1]
\REQUIRE Factor analysis graph $G = (V \cup \cH, D)$.\\
\ENSURE Solved nodes $S \leftarrow \{h \in \cH: \ch(h) = \emptyset\}$.
\REPEAT 
    \FOR {$h \in \cH \setminus S$}
        \IF {\texttt{M}($G,h,S$) holds}
            \STATE {$S \leftarrow S \cup \{h\}$.}
            \BREAK
        \ENDIF
    \ENDFOR
    \IF {$\texttt{L}(G,S)$ holds with $B \subseteq V$}
        \STATE {$S \leftarrow S \cup (\jpa(B) \setminus S)$}
    \ENDIF
\UNTIL{$S = \cH$ or no change has occurred in the last iteration.}
\RETURN ``yes'' if $S=\cH$, ``no'' otherwise.
\end{algorithmic}
\label{alg:check-extended-M-id}
\end{algorithm}

\section{Proofs} \label{sec:proofs}

\subsection{Proof for Section \ref{sec:preliminaries}}

\begin{proof}[Proof of Lemma~\ref{lem:nodes-vs-graph-id}]
It is clear by Definition~\ref{def:identifiability} that generic sign-identifiability of the whole graph implies generic sign-identifiability of all nodes $h \in \cH$. For the other direction, we first note that any finite intersection of Lebesgue measure zero sets is still  a Lebesgue measure zero set. Now, let $(\widetilde{\Omega}, \widetilde{\Lambda}) \in \mathcal{F}_G(\Omega, \Lambda)$ be a generically chosen parameter pair. Since all nodes $h \in \cH$ are generically sign-identifiable, it follows that $\widetilde{\Lambda} = \Lambda \Psi$, where $\Psi$ is a $|\cH| \times |\cH|$ diagonal matrix with entries in $\{\pm 1\}$. It remains to show that $\widetilde{\Omega}$ is equal to $\Omega$. By the definition of $\tau_G$ and since $\tau_{G}(\Omega, \Lambda) = \tau_{G}(\widetilde{\Omega}, \widetilde{\Lambda})$, we have that $\Omega + \Lambda \Lambda^{\top} = \widetilde{\Omega} + \widetilde{\Lambda} \widetilde{\Lambda}^{\top}$. Since $\Psi \Psi^{\top}$ is equal to the identity matrix, it follows that 
\[
     \widetilde{\Omega} = \Omega + \Lambda \Lambda^{\top} - \widetilde{\Lambda} \widetilde{\Lambda}^{\top}  = \Omega + \Lambda \Lambda^{\top} - \Lambda \Psi \Psi^{\top} \Lambda^{\top} = \Omega.
\]
\end{proof}

\subsection{Proof for Section \ref{sec:existing-criteria}}

\begin{proof}[Proof of Theorem~\ref{thm:anderson-rubin}]
The original statement of the theorem in \citet{anderson1956statistical} is written as a pointwise condition for full factor analysis models. We show that the original statement is equivalent to the statement presented here. Consider a matrix $\Omega \in \mathbb{R}_{>0}^{|V|}$ and a matrix $\Lambda \in \mathbb{R}^{|V| \times |\cH|}$. Given the matrix $\Sigma = \Lambda \Lambda^{\top} + \Omega$,  \citet[Theorem 5.1]{anderson1956statistical} states that ``a sufficient condition for identification of $\Omega$ and $\Lambda$ up to multiplication on the right by an orthogonal matrix is that if any row of $\Lambda$ is deleted, there remain two disjoint submatrices of rank $|\cH|$''. We call this pointwise sufficient  condition the ``row-deletion'' property. Since the row-deletion property implies identification of $\Omega$ for any pair $(\Omega, \Lambda) \in \mathbb{R}_{>0}^{|V|} \times \mathbb{R}^{|V| \times |\cH|}$, this also holds if $\Lambda$ is sparse, i.e., for pairs $(\Omega, \Lambda) \in \mathbb{R}_{>0}^{|V|} \times \mathbb{R}^D$.

Now, let $G=(V \cup \mathcal{H}, D)$ be a factor analysis graph such that ZUTA is satisfied and assume that for any deleted row of $\Lambda = (\lambda_{vh}) \in \mathbb{R}^D$ there exist two disjoint submatrices that are generically of rank $|\cH|$. Hence, if $(\Omega, \Lambda) \in \Theta_G$ is generically chosen, then the row-deletion property holds and $\pi_{\diag}(\mathcal{F}_G(\Omega, \Lambda)) = \{\Omega\}$, where $\pi_{\diag}$ is defined in~\eqref{eq:pi-diag}. It follows that  the graph $G$ is generically globally identifiable and we conclude by Lemma~\ref{lem:global-id-follows-sign-id} that $G$ is generically sign-identifiable.
\end{proof}

\begin{proof}[Proof of Corollary~\ref{cor:BB-subsumes-AR}]
    Denote $m=|\cH|$ and $p=|V|$. Since $G$ is AR-identifiable, $p \geq 2m+1$ must hold, and we have that
    \[
        p (m+1) = \frac{1}{2} p (2m+2) \leq \frac{1}{2} p (p+1) = \frac{(p+1)!}{2! (p-1)!} = \binom{p+1}{2}.
    \]
    Now, it holds that $\binom{m}{2}\geq 1$  since $m \geq 2$. We conclude that $|V|+|D|=p (m+1) - \binom{m}{2} < \binom{p+1}{2}$ and thus $G$ is BB-identifiable.
\end{proof}

\subsection{Proof for Section \ref{sec:identifiability}}

\begin{proof}[Proof of Lemma~\ref{lem:determinant}]
We first introduce some notation. Consider a collection of edges $\mathbf{M}=\{h_1 \rightarrow v_1, \ldots, h_k \rightarrow v_k\} \subseteq D$, where $h_i \in \cH$ and $v_i \in V$ for all $i=1, \ldots, k$. If all the $h_i$ are distinct and all the $v_i$ are distinct, then we say that $\mathbf{M}$ is a \emph{pairing} of $S = \{h_1, \ldots, h_k\}$ and $A=\{v_1, \ldots, v_k\}$. Now, we mimic the proof of \citet[Lemma 3.2]{sullivant2010trek}. By the Cauchy-Binet determinant expansion formula, we have
\[
    \det( [\Lambda \Lambda^{\top}]_{A,B}) = \sum_{S \subseteq \cH} \det(\Lambda_{A,S}) \det(\Lambda_{B,S}),
\]
where the sum runs over subsets $S \subseteq \cH$ with $|S|=|A|=|B|$. Let $M(S,A)$ be the set of all pairings of $S$ and $A$.   By the Lindström-Gessel–Viennot lemma \citep{gessel1985binomial, lindstrom1973on}, $\det(\Lambda_{A,S})= \sum_{\mathbf M \in M(S,A)} (-1)^{\mathbf M} \lambda^{\mathbf M}$, where $(-1)^{\mathbf M}$ is the sign of the induced permutation of $\mathbf M$ and $\lambda^{\mathbf M} = \prod_{h \rightarrow v \in \mathbf{M}} \lambda_{vh}$ is the monomial  of edge coefficients. Since each summand $\det( [\Lambda^{\top}]_{S,A}) \det ([\Lambda^{\top}]_{S,B})$ consists of a sum of monomials $\lambda^{\mathbf M_{S,A}} \lambda^{\mathbf M_{S,B}}$ in different combinations of variables, the sum $\det( [\Lambda \Lambda^{\top}]_{A,B})$  vanishes if and only if $\det( \Lambda_{A,S})$ or $\det (\Lambda_{B,S})$ is zero for all $S\subseteq |\cH|$ with $|S|=|A|=|B|$. Now, it holds that $\det( \Lambda_{A,S})$ is zero if and only if there is no pairing of $S$ and $A$.  We conclude the proof by observing that the existence of  a set $S \subseteq \cH$ such that there exists a pairing of $S$ and $A$  and a pairing of $S$ and $B$ is equivalent to the existence of an intersection-free matching of $A$ and $B$; also compare to \citet[Proposition 3.4]{sullivant2010trek}.
\end{proof}

\begin{proof}[Proof of Corollary~\ref{cor:ar-equivalence}]
Let $G=(V \cup \mathcal{H}, D)$ be a factor analysis graph such that ZUTA is satisfied. The graph $G$ is AR-identifiable if and only if for any node $v \in V$, there are two disjoint sets of nodes $U,W \subseteq V \setminus \{v\}$ with $|W|=|U|=|\cH|$ such that the submatrices $\Lambda_{W,\cH}$ and $\Lambda_{U,\cH}$ are generically of rank $|\cH|$. This is equivalent to $\det(\Lambda_{W,\cH})$ and $\det(\Lambda_{U,\cH})$ not being the zero polynomials which holds if and only if $\det([\Lambda \Lambda^{\top}]_{W,U})$ is also not the zero polynomial. Finally, by Lemma~\ref{lem:determinant}, the determinant $\det([\Lambda \Lambda^{\top}]_{W,U})$ is not the zero polynomial if and only if there is an intersection-free matching between $W$ and $U$. 
\end{proof}

\begin{proof}[Proof of Theorem~\ref{thm:identifiability}]
Fix a generically chosen parameter tuple $(\Omega, \Lambda) \in \Theta_G$ and let $\Sigma=(\sigma_{ij})=\tau_G(\Omega, \Lambda)$ be its image in $\PD(|V|)$. Consider a tuple $(\widetilde{\Omega}, \widetilde{\Lambda}) \in \mathcal{F}_G(\Omega, \Lambda)$ in the fiber of $(\Omega, \Lambda)$. 
Since all nodes $\ell \in S$ are generically sign-identifiable, it holds  that $\widetilde{\Lambda}_{\ch(\ell),\ell} = a_{\ell} \Lambda_{\ch(\ell),\ell}$ for some $a_{\ell} \in \{\pm 1\}$. We have to show that the vector $\widetilde{\Lambda}_{\ch(h),h}$ also coincides with $\Lambda_{\ch(h),h}$ up to sign. Define
\[
    \widehat{\sigma}_{uw} = \sigma_{uw} - \sum_{\ell \in \jpa(\{u,w\}) \cap S } \widetilde{\lambda}_{u, \ell} \widetilde{\lambda}_{w,\ell},
\]
for all $u,w \in V$. Here, $\widetilde{\lambda}_{u,\ell}$ denotes the entry of $\widetilde{\Lambda}$ that is indexed by row $u$ and column $\ell$. Since $\widetilde{\Lambda}_{\ch(\ell),\ell} = a_{\ell} \Lambda_{\ch(\ell),\ell}$, we have for $u \neq w$ that 
\begin{align}
\widehat{\sigma}_{u w} 
&=  \sum_{\ell \in \jpa(\{u,w\})} \lambda_{u,\ell} \lambda_{w,\ell} - \sum_{\ell\in \jpa(\{u,w\}) \cap S} a_{\ell}^2 \lambda_{u,\ell} \lambda_{w,\ell} \nonumber \\
&= \sum_{\ell \in \jpa(\{u,w\}) \setminus S}\lambda_{u,\ell} \lambda_{w,\ell}. \label{eq:sigma-hat}
\end{align}
Since $(\widetilde{\Omega}, \widetilde{\Lambda}) \in \mathcal{F}_G(\Omega, \Lambda)$, we also have $\Sigma = \tau_G(\widetilde{\Omega}, \widetilde{\Lambda})$. Hence, it also holds that
\begin{align}
\widehat{\sigma}_{u w} 
&=  \sum_{\ell \in \jpa(\{u,w\})} \widetilde{\lambda}_{u,\ell} \widetilde{\lambda}_{w,\ell} - \sum_{\ell\in \jpa(\{u,w\}) \cap S} \widetilde{\lambda}_{u,\ell} \widetilde{\lambda}_{w,\ell} \nonumber \\
&= \sum_{\ell \in \jpa(\{u,w\}) \setminus S}\widetilde{\lambda}_{u,\ell} \widetilde{\lambda}_{w,\ell}, \label{eq:sigma-hat-2}
\end{align}
Observe that the matrix $\widehat{\Sigma}=(\widehat{\sigma}_{uv})$ lies in the model $F(\widehat{G})$, where the augmented factor analysis graph $\widehat{G}=(V \cup \widehat{\mathcal{H}}, \widehat{D})$ is obtained from $G$ by removing the nodes $\ell \in S$ and their adjacent edges, that is, $\widehat{\mathcal{H}} = \mathcal{H} \setminus S$ and $\widehat{D}= \{h \rightarrow v \in D: h \in \widehat{\mathcal{H}}\}$. We define the following $(k+1)\times(k+1)$ matrix:
\[
    A = 
    \begin{pNiceArray}{c|c}[margin,parallelize-diags=false]
        \lambda_{v,h}^2 & \widehat{\Sigma}_{v, U} \\
        \hline
        \widehat{\Sigma}_{W, v} & \widehat{\Sigma}_{W, U}
    \end{pNiceArray}.
\]
Since $\pa(v) \setminus S = \{h\}$ and due to Equation \eqref{eq:sigma-hat} 
it holds that $A = (\widehat{\Lambda} \widehat{\Lambda}^{\top})_{\{v\} \cup W,\{v\} \cup U}$ with $\widehat{\Lambda}=(\lambda_{u \ell}) \in \mathbb{R}^{\widehat{D}}$. Now, by the definition of the matching criterion, there does not exist an intersection-free matching of $\{v\} \cup W$ and $\{v\} \cup U$ in the graph $G$ that avoids $S$. Hence, every  matching of $\{v\} \cup W$ and $\{v\} \cup U$ in the graph $\widehat{G}$ is also not intersection-free. By Lemma \ref{lem:determinant} we conclude that $\det(A) = 0$, that is, the determinant is equal to the zero polynomial. Expansion among the first row yields
\[
    0 = \det(A) =  \lambda_{v,h}^2 \det(\widehat{\Sigma}_{W, U}) - \underbrace{\sum_{i=1}^k (-1)^i \widehat{\sigma}_{v u_i} \det(\widehat{\Sigma}_{W, \{v\} \cup (U \setminus \{u_i\})} )}_{=:B},
\]
where $U = \{u_1, \ldots, u_k\}$. In order to solve this equation for $\lambda_{v,h}^2$ it has to hold that $\det(\widehat{\Sigma}_{W, U})$ is not zero. Indeed, by the definition of the matching criterion, we have that there is an intersection-free matching of $W$ and $U$. Since this matching avoids the set $S$ in the graph $G$, it is also an intersection-free matching in the graph $\widehat{G}$. In particular, $\det(\widehat{\Sigma}_{W, U})$ is not the zero polynomial. Since $\Lambda \in \mathbb{R}^D$ was generically chosen, we conclude  that $\det(\widehat{\Sigma}_{W, U}) \neq 0$ and we obtain  
\begin{equation} \label{eq:lambda-v-h}
    \lambda_{v,h}^2 = B / \det(\widehat{\Sigma}_{W, U}). 
\end{equation}
Now, define $\widetilde{A}$ equivalent as $A$ but replace $\lambda_{v,h}^2$ in the upper left corner with $\widetilde{\lambda}_{v,h}^2$. Recalling Equation~\eqref{eq:sigma-hat-2} and by repeating the same arguments, we also obtain  
\begin{equation} \label{eq:lambda-v-h-tilde}
   \widetilde{\lambda}_{v,h}^2 = B / \det(\widehat{\Sigma}_{W, U}).
\end{equation}
Taking Equations \eqref{eq:lambda-v-h} and \eqref{eq:lambda-v-h-tilde} together, it follows that $\widetilde{\lambda}_{v,h} = a_h \lambda_{v,h}$ for some $a_h \in \{\pm 1\}$. For the remaining children $u \in \ch(h) \setminus \{v\}$, recall from Equations \eqref{eq:sigma-hat} and \eqref{eq:sigma-hat-2} that $\widehat{\sigma}_{v u} = \lambda_{v,h}\lambda_{u,h} = \widetilde{\lambda}_{v,h}\widetilde{\lambda}_{u,h}$. Dividing by $\widetilde{\lambda}_{v,h}$ yields
\begin{equation*} \label{eq:id-2}
    \widetilde{\lambda}_{u,h} = \frac{\lambda_{v,h}\lambda_{u,h}}{\widetilde{\lambda}_{v,h}} = a_h \frac{\lambda_{v,h}\lambda_{u,h}}{\lambda_{v, h}} = a_h \lambda_{u,h},
\end{equation*}
which is also well-defined for generic parameter choices. We conclude that $\widetilde{\Lambda}_{\ch(h),h}=a_h\Lambda_{\ch(h),h}$, as claimed. Finally, note that that the set of points $(\Omega, \Lambda) \in \mathbb{R}^{|V|+|D|}$, where $\det(\widehat{\Sigma}_{W,U})$ or $\lambda_{v,h}$ is zero, defines a proper algebraic subset. Hence, it is a null set in $\Theta_G$; 
see e.g.~the lemma in   \citet{okamoto1973distinctness}.
\end{proof}

\begin{proof}[Proof of Corollary~\ref{cor:subsumes-AR}]
We begin by showing statement (i). Let $G=(V \cup \cH, D)$ be a factor analysis graph that satisfies ZUTA and is AR-identifiable. Let $\prec$ be a ZUTA-ordering on $\cH$ with respect to $G$. Fix a latent node $h \in \cH$ and assume that all nodes $\ell \in \cH$ with $\ell \prec h$ are generically sign-identifiable. To show that $G$ is M-identifiable, it is enough to show that there are $v \in V$ and $W, U \subseteq V$ such that the tuple $(v, W, U, S)$ with $S=\{\ell \in \cH : \ell \prec h\}$ satisfies the matching criterion with respect to $h$. 

Since $G$ satisfies ZUTA, there is an observed node $v \in \ch(h)$ such that $v \in \ch(h)$ and $v \not\in \bigcup_{\ell \succ h} \ch(\ell)$. Hence, $\pa(v) \setminus S = \{h\}$. By Corollary~\ref{cor:ar-equivalence}, there further exist two disjoint sets $\widetilde{W},\widetilde{U} \subseteq V \setminus \{v\}$ with $|\widetilde{W}|=|\widetilde{U}|=|\cH|$ such that there is an intersection-free matching between $\widetilde{W}$ and $\widetilde{U}$. Define $\widetilde{W}=\{w_1, \ldots, w_{|\cH|}\}$ and $\widetilde{U}=\{u_1, \ldots, u_{|\cH|}\}$ and let $\Pi=\{\pi_1, \ldots, \pi_{|\cH|}\}$ be an intersection-free matching of $\widetilde{W}$ and $\widetilde{U}$ such that $\pi_i$ is given by $w_i \leftarrow h_i \rightarrow u_i$. Note that the set of latent nodes $\{h_1, \ldots, h_{|\cH|}\}$ that appear in the matching $\Pi$ is equal to the set of all latent nodes $\cH$ since the matching is intersection-free. If a latent node $h_i \in S$, we remove $w_i$ from $\widetilde{W}$ and $u_i$ from $\widetilde{U}$. This defines $W$ and $U$ as subsets of $\widetilde{W}$ and $\widetilde{U}$ respectively. Note that the sets $W$ and $U$ are nonempty since $h \not\in S$.

To see that the tuple $(v,W,U,S)$ indeed satisfies the matching criterion, the only nontrivial condition that remains to be checked is condition (iv) of Definition~\ref{def:matching-criterion}. Suppose there is an intersection free matching of $\{v\} \cup \widetilde{W}$ and $\{v\} \cup \widetilde{U}$ that avoids $S$.  Adding the paths $\pi_i: w_i \leftarrow h_i \rightarrow u_i$ with $h_i \in S$ to this matching, gives an intersection-free matching of $\{v\} \cup \widetilde{W}$ and $\{v\} \cup \widetilde{U}$. On the other hand, any matching of $\{v\} \cup \widetilde{W}$ and $\{v\} \cup \widetilde{U}$ must intersect since $|\widetilde{W}|=|\widetilde{U}|=|\cH|$. We conclude by contradiction that there can not exist an intersection-free matching of $\{v\} \cup W$ and $\{v\} \cup U$ that avoids $S$.

We now show statement (ii). One direction follows from (i). For the other direction, let $G=(V \cup \cH, D)$ be a full-ZUTA graph that is M-identifiable. By Corollary \ref{cor:ar-equivalence}, we need to show that, for any $v \in V$, there exist two disjoint sets $W,U \subseteq V \setminus \{v\}$ with $|W|=|U|=|\cH|$ such that there is an intersection-free matching between $W$ and $U$. If the latter condition is satisfied for a node $v \in V$, we say  for simplicity that  $U$ and $W$ \textit{satisfy the AR-condition} for the node $v$. 

Since $G$ is M-identifiable, there is a relabeling of the latent nodes $\cH = \{h_1, \ldots, h_m\}$ and there are tuples  $(v_j, W_j, U_j, S_j) \in V \times 2^{V} \times 2^{V} \times 2^{ \cH \setminus \{h_j\}}$ that  satisfy the matching criterion with respect to $h_j$, such that  $i < j$ whenever $h_i \in S_j$. For the first latent node $h_1$, it must hold that $S_1=\emptyset$. 
Moreover, since $G$ is a full-ZUTA graph, we can relabel the elements of $U_1$ and relabel the elements of $W_1$ such that $U_1=\{u_1, \ldots, u_s\}$ and $W_1=\{w_1, \ldots, w_s\}$, and $h_{k+1} \in \jpa(w_k,u_k)$ for $k=1, \ldots, s$ and $s \in [|\cH|]$.

First, we observe that $|U_1|=|W_1|=s =|\cH|$. Assume that this is not true, i.e., $|U_1|=|W_1|=s < |\cH|$. Then there is an intersection-free matching of $\{v_1\} \cup U_1$ and $\{v_1\} \cup W_1$ given by the paths $v_1 \leftarrow h_1 \leftarrow v_1$ and $u_k \leftarrow h_{k+1} \leftarrow v_k$ for all $k=1, \ldots, s$, which is a contradiction to property (iv) of the matching criterion. We conclude that $|U_1|=|W_1|=|\cH|$ and, in  particular, the AR-condition is satisfied by $U=U_1$  and $W=W_1$ for the node $v_1$. By taking $U=U_1$ and $W=W_1$, the AR-condition is also satisfied for all $v \in V \setminus (U_1 \cup W_1 \cup \{v_1\})$.

It remains to show that there exist $U$ and $W$ such that the AR-condition is satisfied  for $v \in U_1 \cup W_1$. W.l.o.g.~we may assume that $v=u_k \in U_1$.  Take $U=(U_1 \setminus u_k) \cup \{v_1\}$ and $W=W_1$. Since $h_{k+1} \in \jpa(w_k,u_k)$, there exists an intersection-free matching of $U$ and $W$ given by the paths $v_1 \leftarrow h_1 \rightarrow w_1$, $u_{i-1} \leftarrow h_{i} \rightarrow w_{i}$ for all $i=2, \ldots, k$ and $u_{i} \leftarrow h_{i} \rightarrow w_{i}$ for all $i=k+1, \ldots, |\cH|$.
\end{proof}

\begin{proof}[Proof of Corollary~\ref{cor:zuta-satisfied}]
Suppose that $G=(V \cup \cH, D)$ is M-identifiable. Then there is a total ordering $\prec$ on the latent nodes $\cH$ such that $\ell \prec h$ whenever $\ell \in S_h$, where $(v_h, W_h, U_h, S_h) \in V \times 2^{V} \times 2^{V} \times 2^{ \cH \setminus \{h\}}$ satisfies the matching criterion with respect to $h$. It follows that $\ell \prec h$ if $\ell \in \pa(v_h) \setminus \{h\}$. Said differently, $v_h \in \ch(h)$ but $v_h \not\in \ch(\ell)$ if $h \prec \ell$. We conclude that the ordering $\prec$ is a ZUTA-ordering with respect to $G$. 
\end{proof}

\begin{proof}[Proof of Theorem~\ref{thm:f-identifiability}]
Fix a generically chosen parameter tuple $(\Omega, \Lambda) \in \Theta_G$ and let $\Sigma=(\sigma_{ij})=\tau_G(\Omega, \Lambda)$ be its image in $\PD(|V|)$. Consider a tuple $(\widetilde{\Omega}, \widetilde{\Lambda}) \in \mathcal{F}_G(\Omega, \Lambda)$ in the fiber of $(\Omega, \Lambda)$. By assumption, for all nodes $\ell \in S$, it holds  that $\widetilde{\Lambda}_{\ch(\ell),\ell} = a_{\ell} \Lambda_{\ch(\ell),\ell}$ for some $a_{\ell} \in \{\pm 1\}$. We have to show for all $h \in \jpa(B) \setminus S$   that $\widetilde{\Lambda}_{\ch(h),h}$ also coincides with $\Lambda_{\ch(h),h}$ up to sign. As in the proof of Theorem \ref{thm:identifiability}, define
\[
    \widehat{\sigma}_{vw} = \sigma_{vw} - \sum_{\ell \in \jpa(\{v,w\}) \cap S } \widetilde{\lambda}_{v, \ell} \widetilde{\lambda}_{w,\ell},
\]
for all $v,w \in V$. Since $\widetilde{\Lambda}_{\ch(\ell),\ell} = a_{\ell} \Lambda_{\ch(\ell),\ell}$, we have for $v\neq w$ that 
\begin{align*}
\widehat{\sigma}_{v w} 
&=  \sum_{\ell \in \jpa(\{v,w\})} \lambda_{v,\ell} \lambda_{w,\ell} - \sum_{\ell\in \jpa(\{v,w\}) \cap S} a_{\ell}^2 \lambda_{v,\ell} \lambda_{w,\ell} \nonumber \\
&= \sum_{\ell \in \jpa(\{v,w\}) \setminus S}\lambda_{v,\ell} \lambda_{w,\ell}. 
\end{align*}
Since $(\widetilde{\Omega}, \widetilde{\Lambda}) \in \mathcal{F}_G(\Omega, \Lambda)$, we also have $\Sigma = \tau_G(\widetilde{\Omega}, \widetilde{\Lambda})$. Hence, it also holds that
\begin{align*}
\widehat{\sigma}_{v w} 
&=  \sum_{\ell \in \jpa(\{v,w\})} \widetilde{\lambda}_{v,\ell} \widetilde{\lambda}_{w,\ell} - \sum_{\ell\in \jpa(\{v,w\}) \cap S} \widetilde{\lambda}_{v,\ell} \widetilde{\lambda}_{w,\ell} \nonumber \\
&= \sum_{\ell \in \jpa(\{v,w\}) \setminus S}\widetilde{\lambda}_{v,\ell} \widetilde{\lambda}_{w,\ell}. 
\end{align*}
Observe that the submatrix $\widehat{\Sigma}_{B,B}$ of the matrix $\widehat{\Sigma}=(\widehat{\sigma}_{vw})$ lies in the model $F(\widetilde{G})$, where $\widetilde{G} = G[B \cup (\jpa(B)\setminus S)]$. Let $\cL = \jpa(B) \setminus S$ be the latent nodes of $\widetilde{G}$, and let $\widetilde{D}$ be the edge set of $\widetilde{G}$. Since $\widetilde{G}$ is a full-ZUTA graph and $|B| + |\widetilde{D}| < \binom{|B|+1}{2}$, it follows from Theorem~\ref{thm:bekker-berge} that $\widetilde{G}$ is generically sign-identifiable. Recalling Definition~\ref{def:identifiability}, this means that 
\begin{equation} \label{eq:U-identifiable}
    \widetilde{\Lambda}_{B,\cL} = \Lambda_{B,\cL} \Psi,
\end{equation}
where $\Psi \in \mathbb{R}^{|\cL| \times |\cL|}$ is a diagonal matrix with diagonal entries in $\{\pm 1\}$. It remains to show that Equation~\eqref{eq:U-identifiable} also holds for all nodes in $V \setminus B$, i.e.,  we need to show that $\widetilde{\Lambda}_{V,\cL} = \Lambda_{V,\cL} \Psi$. We will show this by a double induction: We first induct on the  unique ZUTA-ordering $\prec_{\text{ZUTA}}$ of the latent nodes $\cL$ with respect to $\widetilde{G}$. Within each induction step, where we consider a fixed node $h \in \cL$, we then induct on the $U$-first-ordering $\prec_h$ on $\ch(h)$. 

Let $h \in \cL$ and assume that $\widetilde{\Lambda}_{V, \ell} = a_{\ell} \Lambda_{V,\ell}$ for all $\ell \prec_{\text{ZUTA}} h$, where $a_{\ell} \in \{\pm 1\}$. Moreover, consider a node $v \in \ch(h)$. For the base case of the induction, we note that $B \cap \ch(h) \neq \emptyset$ because of Conditions (i) and (iii) of the local BB-criterion. By Equation~\eqref{eq:U-identifiable}, it holds   for all $v \in \ch(h) \cap B$ that  $\widetilde{\lambda}_{v,h} = a_h \lambda_{v,h}$, where $a_h = \Psi_{hh} \in \{\pm 1\}$. 

Now, let $v \in \ch(h) \setminus B$ and assume that $\widetilde{\lambda}_{u,h} = a_h \lambda_{u,h}$ whenever $u \prec_h v$ for $u \in \ch(h)$. By property (ii) of the local BB-criterion, there has to be $u \in \ch(h)$ with $u \prec_h v$ such that $\jpa(\{v,u\}) \setminus S \subseteq \{\ell \in \cL: \ell \preceq_{\text{ZUTA}} h \}$. Define $T=S \cup \{\ell \in \cL: \ell \prec_{\text{ZUTA}} h \}$ and consider the quantity
\[
    \overline{\sigma}_{vu} = \sigma_{vu} - \sum_{\ell \in \jpa({v,u}) \cap T} \widetilde{\lambda}_{v, \ell} \widetilde{\lambda}_{u, \ell}.
\]
Observe $\jpa({v,u}) \setminus T = \{h\}$. Since $\Sigma = \tau_G(\Omega, \Lambda) = \tau_G(\widetilde{\Omega}, \widetilde{\Lambda})$, it follows similarly as above that 
\[
    \overline{\sigma}_{vu} = \lambda_{v, h} \lambda_{u, h} = \widetilde{\lambda}_{v, h} \widetilde{\lambda}_{u, h}.
\]
Dividing by $\widetilde{\lambda}_{u,h}= a_h \lambda_{u,h}$ yields
\begin{equation*}
    \widetilde{\lambda}_{v,h} = \frac{\lambda_{v,h}\lambda_{u,h}}{\widetilde{\lambda}_{u,h}} = a_h \frac{\lambda_{v,h}\lambda_{u,h}}{\lambda_{u, h}} = a_h \lambda_{v,h},
\end{equation*}
which is well-defined for generic parameter choices. We conclude that $\widetilde{\Lambda}_{\ch(h),h}=a_h\Lambda_{\ch(h),h}$.
\end{proof}

\begin{proof}[Proof of Corollary~\ref{cor:full-ZUTA-BB-iff-F}]
Let $G=(V \cup \cH, D)$ be a full-ZUTA graph that is BB-identifiable.  It is enough to show that the tuple $(B,S)=(V, \emptyset)$ satisfies the local BB-criterion.  Condition (i) is satisfied since BB-identifiability implies $|\cH| < |V|$, and hence $\jpa(V)= \cH$, which yields that the induced subgraph $\widetilde{G}=G[V \cup \jpa(V)]$ is equal to $G$. It follows that Condition (iii) is also satisfied since BB-identifiability implies $|V| + |D| < \binom{p+1}{2}$. To conclude, observe that there is nothing to show in Condition (ii) since $\ch(h)\setminus V = \emptyset$ for all $h \in \cH$.

For the other direction, let $G=(V \cup \cH, D)$ be a full-ZUTA graph and suppose that we can certify generic sign-identifiability of $G$ by recursively applying Theorem~\ref{thm:f-identifiability}. It is enough to show that whenever the local BB-criterion is satisfied in a full-ZUTA graph for some tuple $(B,S)\in 2^V \times 2^{\mathcal{H}}$ with $S=\emptyset$, then it must hold that $\jpa(B)=\cH$. Since the induced graph $G[B \cup \cH]$ is a full-ZUTA graph and Condition (iii) holds, we can then replace $B$ by $V$ and Condition (iii) still holds, i.e., the graph $G$ is BB-identifiable.

Let $\widetilde{D}$ be the edge set of the induced subgraph $G[B \cup \jpa(B)]$
To show that $\jpa(B)=\cH$, we first observe that $\jpa(B)$ can not be the empty set since in this case $|B|\leq 1$ and hence Condition (iii) does not hold. Thus it must be that $|B| > 1$ and $|\jpa(B)|\geq 1$.  It holds that $|B|+|\widetilde{D}|<\binom{|B|+1}{2}$ if and only if $|B| \geq \lfloor |\jpa(B)| + \frac{1}{2} \sqrt{8|\jpa(B)|+1} + \frac{1}{2}\rfloor + 1$. Since $|\jpa(B)| \geq 1$, this implies, in particular, that $|B| \geq |\jpa(B)|+2$. Now, suppose $\jpa(B) \neq \cH$. Since $|B| \geq |\jpa(B)|+2$ and $G$ is a full-ZUTA graph, there must be two nodes $u,w \in B$ such that there is a latent node $h \in \jpa(\{u,w\}) \setminus \jpa(B)$. This is a contradiction and we conclude that $\jpa(B) = \cH$. 
\end{proof}

\section{Deciding Identifiability by Computational Algebra} \label{sec:computational-algebra}
Generic sign-identifiability may be decided by computational algebraic geometry. We make use of the following lemma where, for a symmetric matrix $M \in \mathbb{R}^{p \times p}$, we denote by $\od(M) \in \mathbb{R}^{\binom{p}{2}}$ the vector of off-diagonal entries of $M$.

\begin{lemma} \label{lem:off-diago}
A factor analysis graph $G=(V \cup \cH, D)$ is generically sign-identifiable if and only if the map
\begin{align*}
    \phi_G: \mathbb{R}^D &\longrightarrow \mathbb{R}^{\binom{|V|}{2}} \\
    \Lambda & \longmapsto \od(\Lambda \Lambda^{\top})
\end{align*}
has fibers of the form
\[
    \phi^{-1}_G(\phi_G(\Lambda)) = \{\widetilde{\Lambda} \in \mathbb{R}^D : \widetilde{\Lambda}=\Lambda \Psi \text{ for } \Psi \in \{\pm 1\}^{|\cH| \times |\cH|} \text{ diagonal}\}
\]
for almost all $\Lambda \in \mathbb{R}^D$.
\end{lemma}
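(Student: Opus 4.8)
The plan is to exploit the one structural fact that drives the lemma: since $\Omega$ is diagonal, the off-diagonal part of $\tau_G(\Omega,\Lambda)=\Omega+\Lambda\Lambda^\top$ equals $\od(\Lambda\Lambda^\top)=\phi_G(\Lambda)$ and is thus independent of $\Omega$, whereas the diagonal part can be shifted at will by varying $\Omega$, subject only to positivity. So the fibers $\mathcal{F}_G(\Omega,\Lambda)$ and $\phi_G^{-1}(\phi_G(\Lambda))$ encode the same information about $\Lambda$, and the two ``canonical fiber'' conditions should be equivalent. I would first record the trivial inclusion that holds for every $\Lambda$: if $\widetilde\Lambda=\Lambda\Psi$ with $\Psi\in\{\pm1\}^{|\cH|\times|\cH|}$ diagonal then $\widetilde\Lambda\widetilde\Lambda^\top=\Lambda\Psi\Psi^\top\Lambda^\top=\Lambda\Lambda^\top$, hence $\phi_G(\widetilde\Lambda)=\phi_G(\Lambda)$ and, for any $\Omega$, $(\Omega,\Lambda\Psi)\in\mathcal{F}_G(\Omega,\Lambda)$; only the reverse inclusions need work.

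For the direction ``$\phi_G$-fibers canonical $\Rightarrow$ $G$ generically sign-identifiable'', let $M\subseteq\mathbb{R}^D$ be the Lebesgue-null set of matrices $\Lambda$ whose fiber $\phi_G^{-1}(\phi_G(\Lambda))$ is not of the stated form. Then $\mathbb{R}^{|V|}_{>0}\times M$ is a null subset of $\Theta_G$. For any $(\Omega,\Lambda)\in\Theta_G$ with $\Lambda\notin M$ and any $(\widetilde\Omega,\widetilde\Lambda)\in\mathcal{F}_G(\Omega,\Lambda)$, comparing off-diagonal entries in the identity $\Omega+\Lambda\Lambda^\top=\widetilde\Omega+\widetilde\Lambda\widetilde\Lambda^\top$ gives $\phi_G(\widetilde\Lambda)=\phi_G(\Lambda)$, hence $\widetilde\Lambda=\Lambda\Psi$; then $\widetilde\Lambda\widetilde\Lambda^\top=\Lambda\Lambda^\top$, and comparing diagonal entries forces $\widetilde\Omega=\Omega$. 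Thus $\mathcal{F}_G(\Omega,\Lambda)$ is canonical outside the null set $\mathbb{R}^{|V|}_{>0}\times M$, which is exactly generic sign-identifiability.

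For the converse, suppose $G$ is generically sign-identifiable and let $N\subseteq\Theta_G$ be the null set of pairs at which $\mathcal{F}_G$ fails to be canonical. Applying Tonelli's theorem to the indicator of $N$ over the product $\mathbb{R}^{|V|}_{>0}\times\mathbb{R}^D$ shows that for almost all $\Lambda\in\mathbb{R}^D$ the slice $N_\Lambda=\{\Omega\in\mathbb{R}^{|V|}_{>0}:(\Omega,\Lambda)\in N\}$ has measure zero; fix such a $\Lambda$. Given any $\widetilde\Lambda\in\phi_G^{-1}(\phi_G(\Lambda))$, set $\widetilde\Omega:=\Omega+\diag(\Lambda\Lambda^\top-\widetilde\Lambda\widetilde\Lambda^\top)$. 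The set of $\Omega$ for which $\widetilde\Omega$ has strictly positive diagonal is a shifted positive orthant, hence has positive measure, so it meets the complement of $N_\Lambda$; choose $\Omega$ there. By construction $\tau_G(\widetilde\Omega,\widetilde\Lambda)=\tau_G(\Omega,\Lambda)$ — off-diagonals agree since $\phi_G(\widetilde\Lambda)=\phi_G(\Lambda)$, diagonals by the choice of $\widetilde\Omega$ — so $(\widetilde\Omega,\widetilde\Lambda)\in\mathcal{F}_G(\Omega,\Lambda)$, and since $(\Omega,\Lambda)\notin N$ this forces $\widetilde\Lambda=\Lambda\Psi$ for some diagonal $\Psi\in\{\pm1\}^{|\cH|\times|\cH|}$. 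As $\widetilde\Lambda$ ranged over the whole $\phi_G$-fiber, that fiber is canonical for this (almost every) $\Lambda$.

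I expect the main obstacle to be the measure-theoretic bookkeeping in the converse: one has to descend from a null set in $\Theta_G$ to a null set in $\mathbb{R}^D$ (this is the Tonelli step), and then check that imposing $\widetilde\Omega\in\mathbb{R}^{|V|}_{>0}$ does not prevent choosing $\Omega$ outside the bad slice — which works precisely because the admissible $\Omega$'s form a set of positive measure while the slice $N_\Lambda$ is null. The rest is the elementary remark that $\Omega$ only affects the diagonal of $\Sigma$, together with the trivial inclusion already noted.
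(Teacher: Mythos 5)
Your proposal is correct and follows essentially the same route as the paper: the forward direction compares off-diagonal and then diagonal entries exactly as in the paper, and the converse uses the same trick of absorbing the diagonal discrepancy $D=\diag(\Lambda\Lambda^{\top}-\widetilde{\Lambda}\widetilde{\Lambda}^{\top})$ into a noise matrix chosen from a positive-measure set of admissible diagonals. The only difference is presentational: you make explicit (via the Tonelli/slice argument) the measure-theoretic step that the paper leaves implicit in its phrase ``$Q$ can be chosen from a set that has positive measure,'' which is a welcome clarification but not a different proof.
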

\begin{proof}
Let $G=(V \cup \cH, D)$ be a factor analysis graph. 
Take a generic tuple $(\Omega, \Lambda) \in \Theta_G$, and assume that the fiber $\phi^{-1}_G(\phi_G(\Lambda))$ is of the form as in the statement. Now, consider another tuple $(\widetilde{\Omega}, \widetilde{\Lambda}) \in \mathcal{F}_G( \Omega, \Lambda)$. Since $\Omega + \Lambda \Lambda^{\top} = \widetilde{\Omega} + \widetilde{\Lambda} \widetilde{\Lambda}^{\top}$ and $\Omega$ and $\widetilde{\Omega}$ are diagonal, we also have the equality $\od(\Lambda \Lambda^{\top}) = \od(\widetilde{\Lambda} \widetilde{\Lambda}^{\top})$. It follows that $\widetilde{\Lambda}=\Lambda \Psi$, where $\Psi$ is a $|\cH| \times |\cH|$ diagonal matrix with entries in $\{\pm 1\}$. But then we also have that
\[
     \widetilde{\Omega} = \Omega + \Lambda \Lambda^{\top} - \widetilde{\Lambda} \widetilde{\Lambda}^{\top}  = \Omega + \Lambda \Lambda^{\top} - \Lambda \Psi \Psi^{\top} \Lambda^{\top} = \Omega,
\]
where we have used that $\Psi \Psi^{\top}$ is equal to the identity matrix. Hence, we have shown that $G$ is generically sign-identifiable.

For the other direction, consider a generic matrix $\Lambda \in \mathbb{R}^D$ and assume that $G$ is generically sign-identifiable. For any matrix $\widetilde{\Lambda} \in \phi^{-1}_G(\phi_G(\Lambda))$, it holds that $\od(\Lambda \Lambda^{\top}) = \od(\widetilde{\Lambda} \widetilde{\Lambda}^{\top})$. Hence, 
$
    \Lambda \Lambda^{\top} - \widetilde{\Lambda} \widetilde{\Lambda}^{\top} = D,
$
where $D$ is a diagonal matrix. Now, consider another diagonal matrix $Q$   that is positive definite with $\min_{v \in V}Q_{vv} > - \min_{v \in V}D_{vv}$ if $\min_{v \in V}D_{vv}$ is negative. We have the equality
\[
    \Lambda \Lambda^{\top} + Q = \widetilde{\Lambda} \widetilde{\Lambda}^{\top} + D + Q,
\]
where, by construction, both $Q$ and $D + Q$ are positive definite. Note that $Q$ can be chosen from a set that has positive measure in $\mathbb{R}^{|V|}$. By the generic sign-identifiability of $G$ we can thus conclude that $D=0$ and that $\widetilde{\Lambda}=\Lambda \Psi$, where $\Psi$ is a $|\cH| \times |\cH|$ diagonal matrix with entries in $\{\pm 1\}$. 
\end{proof}

Let $\mathcal{F}_{\phi}(\Lambda)= \{\widetilde{\Lambda} \in \mathbb{R}^D: \phi_G(\widetilde{\Lambda}) = \phi_G(\Lambda)\}$ be the fiber of a matrix $\Lambda \in \mathbb{R}^D$ under the map $\phi_G$ given in Lemma~\ref{lem:off-diago}. Moreover, we denote by $\mathcal{F}_{\phi, \mathbb{C}}(\Lambda) = \{\widetilde{\Lambda} \in \mathbb{C}^D: \phi_G(\widetilde{\Lambda}) = \phi_G(\Lambda)\}$  the complex fiber under the extension of the map $\phi_G$ to the domain $\mathbb{C}^D$. In the following, we explain how to obtain the complex fiber $\mathcal{F}_{\phi, \mathbb{C}}(\Lambda_0)$ of a generically chosen parameter point $\Lambda_0 \in \mathbb{R}^D$ by computational algebra, and we discuss how this may allow us to determine generic sign-identifiability over the real numbers. \\

We denote by $\mathbb{R}[\lambda_{vh}: h \rightarrow v \in D]$ the ring over the indeterminates corresponding to the edges in the graph $G$ with real coefficients. Moreover, from now on, we let $\Lambda \in \mathbb{R}^D$ be the matrix with entries $\Lambda_{vh}$ being the indeterminates $\lambda_{vh}$ whenever $h \rightarrow v \in D$ and zero otherwise. For a randomly chosen matrix $\Lambda_0 \in \mathbb{R}^D$, we compute a reduced Gröbner basis for the equation system
\begin{equation} \label{eq:generators-ideal}
    \phi_G(\Lambda_0) - \od(\Lambda \Lambda^{\top}) 
\end{equation}
with an arbitrary term order on the indeterminates $\lambda_{vh}$; see~\citet{cox2007ideals} for background on Gröbner bases. The reduced Gröbner basis allows us to compute both fibers $\mathcal{F}_{\phi, \mathbb{C}}(\Lambda_0)$ and $\mathcal{F}_{\phi}(\Lambda_0)$. If $\Lambda_0$ is drawn from a continuous probability distribution, then the dimension and cardinality of the complex fibers $\mathcal{F}_{\phi, \mathbb{C}}(\Lambda_0)$ coincide with probability one. However, this is not true for the real fibers $\mathcal{F}_{\phi}(\Lambda_0)$.

We now explain how we determine generic-sign identifiability when knowing the complex fiber $\mathcal{F}_{\phi, \mathbb{C}}(\Lambda_0)$ of a generically chosen parameter matrix $\Lambda_0 \in \mathbb{R}^D$. 
Let $\mathbf{\Pi}:=\{\pm 1\}^{|\cH| \times |\cH|}$ be the group of diagonal matrices with diagonal entries in $\{\pm 1\}$. For $\Lambda, \widetilde{\Lambda} \in \mathbb{C}^D$, we define the equivalence relation
\[
    \Lambda \sim \widetilde{\Lambda} \text{ if there is } \Psi \in \mathbf{\Pi} \text{ such that } \widetilde{\Lambda} \Psi = \Lambda,
\]
and denote by $\mathcal{F}_{\phi, \mathbb{C}}(\Lambda_0) / \mathbf{\Pi}$ the set of equivalence classes of $\mathcal{F}_{\phi, \mathbb{C}}(\Lambda_0)$.  Note that for a generically chosen point $\Lambda_0 \in \mathbb{R}^D$, the number of complex equivalence classes $|\mathcal{F}_{\phi, \mathbb{C}}(\Lambda_0)/ \mathbf{\Pi}|$ is always the same.

\begin{definition}
Let $G=(V \cup \cH, D)$ be a factor analysis graph and let $\Lambda_0 \in \mathbb{R}^D$ be a generic parameter matrix. We say that the number of complex equivalence classes $|\mathcal{F}_{\phi, \mathbb{C}}(\Lambda_0)/ \mathbf{\Pi}| \in \mathbb{N} \cup \{\infty\}$ is the \emph{degree of sign-identifiability}.
\end{definition}

To guard against false conclusions, we repeat the randomized calculations to determine the degree of sign-identifiability several times for each graph in practice. For computing a reduced Gröbner basis one can use any computer algebra system such as SINGULAR \citep{Singular}, Macaulay2 \citep{Macaulay} or SageMath \citep{SageMath}. 
A factor analysis graph is generically sign-identifiable if its degree of sign-identifiability is $1$. If the degree of sign-identifiability is infinite, then the real fiber $\mathcal{F}_{\phi}(\Lambda)$ is also infinite \citep[Lemma 9]{whitney1957elementary}. If the degree of identifiability is a finite number larger or equal to $2$, then generic sign-identifiability may or may not hold. Formally, one needs to verify that the set of $\Lambda_0 \in \mathbb{R}^D$ where the number of real equivalence classes $|\mathcal{F}_{\phi}(\Lambda_0) / \mathbf{\Pi}|$ is larger or equal than $2$ has zero measure. In practice, we compute the number $|\mathcal{F}_{\phi}(\Lambda_0) / \mathbf{\Pi}|$ for several random draws of $\Lambda_0 \in \mathbb{R}^D$ and conclude that the graph is not generically sign-identifiable if we find $|\mathcal{F}_{\phi}(\Lambda_0) / \mathbf{\Pi}| \geq 2$ for at least two random draws of $\Lambda_0$. In our experiments in Section~\ref{sec:experiments} we always found that if the degree of identifiability is a finite number larger or equal to $2$, then the graph is not generically sign-identifiable.

\end{appendix}

\

\end{document}